\def\H{\mathbb{H}}
\def\C{\mathbb{C}}
\def\Z{\mathbb{Z}}
\def\N{\mathbb{N}}
\def\R{\mathbb{R}}
 \newtheorem{thm}{Theorem}[section]
 \newtheorem{lem}[thm]{Lemma}
 \newtheorem{prop}[thm]{Proposition}
\newcommand{\be}{\begin{equation}}
\newcommand{\ee}{\end{equation}}
\newcommand{\bea}{\begin{eqnarray}}
\newcommand{\eea}{\end{eqnarray}}
\newcommand{\Bea}{\begin{eqnarray*}}
\newcommand{\Eea}{\end{eqnarray*}}
\newcounter{cnt1}
\newcounter{cnt2}
\newcounter{cnt3}
\newcommand{\blr}{\begin{list}{$($\roman{cnt1}$)$}
 {\usecounter{cnt1} \setlength{\topsep}{0pt}
 \setlength{\itemsep}{0pt}}}
\newcommand{\bla}{\begin{list}{$($\alph{cnt2}$)$}
 {\usecounter{cnt2} \setlength{\topsep}{0pt}
 \setlength{\itemsep}{0pt}}}
\newcommand{\bln}{\begin{list}{$($\arabic{cnt3}$)$}
 {\usecounter{cnt3} \setlength{\topsep}{0pt}
 \setlength{\itemsep}{0pt}}}
\newcommand{\el}{\end{list}}
\title[Weyl multipliers]{Weighted norm inequalities for Weyl\\ multipliers and Fourier multipliers on the Heisenberg group}
\author{Sayan Bagchi and Sundaram Thangavelu}
\date{}
\begin{document}


\address{department of mathematics,
Indian Institute of Science, Bangalore - 560 012, India}
\email{sayan@math.iisc.ernet.in} \email{veluma@math.iisc.ernet.in}


\keywords{ Heisenberg group, Weyl transform, Weyl multipliers,
weighted norm inequalities, R-boundedness,  $L^p$ multipliers on
the Heisenberg group.} \subjclass[2010] {Primary:  43A80, 42B25.
Secondary: 42B20, 42B35, 33C45.}


\begin{abstract}
In this paper we prove weighted norm inequalities for Weyl
multipliers satisfying Mauceri's condition. As  an application, we prove certain multiplier theorems on the
Heisenberg group and also show in the context of a theorem of Weis
on operator valued Fourier multipliers that the  R-boundedness of
the derivative of the multiplier is not necessary for the
boundedness of the multiplier transform.

\end{abstract}


\maketitle

\section [Introduction]
{Introduction and the main results}
 In this paper we are concerned with weighted norm inequalities for
 Weyl multipliers and the relevance of such inequalities in the study of Fourier multipliers on the Heisenberg group.
 Building upon a result of Mauceri \cite{M} we prove certain weighted norm inequalities and then investigate the possibility of using
 them to prove boundedness of Fourier multipliers on the Heisenberg group.
 Weyl multipliers naturally occur in the context of  Fourier multipliers on the Heisenberg group if we view the
 latter as operator-valued multipliers for the Euclidean Fourier transform.
 In this context, there is an interesting result of L. Weis \cite{W} which gives a sufficient condition on the (operator-valued) multipliers,
 so that such Fourier multipliers are bounded on $ L^p $ spaces of Banach space valued functions.
 However, our investigations have led to the conclusion that one of the conditions
 in the above mentioned theorem of Weis is not necessary for the boundedness of the multiplier transform.
 Nevertheless, we prove some versions of multiplier theorems on the Heisenberg group. \\

In order to set-up notation and state our main results, we begin with recalling some basic definitions. Consider the Euclidean Fourier transform defined on $ L^1(\R^n)$
by
$$\hat{f}(\xi)=(2\pi)^{-\frac{n}{2}}\int_{\R^n}e^{-ix.\xi}f(x)dx.$$
 It is well known that
the map $f\rightarrow \hat{f}$ extends to the whole of $L^2(\R^n)$
as a unitary operator. Given a bounded measurable function
$m(\xi)$ on $\R^n$ we can define a transformation $T_m$ by setting
$$\widehat{(T_mf)}(\xi)=m(\xi)\hat{f}(\xi),\;\;\;\;\;\;f\in L^2(\R^n).$$
It is clear that $T_m$ is a bounded operator on $L^2(\R^n)$ but
without further assumptions it need not extend to $L^p(\R^n)$
 as a bounded operator for $p \neq 2$.
When it extends we say that $m$ (or equivalently $T_m$) is a
Fourier multiplier for $L^p(\R^n)$. Some sufficient conditions are
provided by H\={o}rmander-Mihlin and Marcinkiewicz multiplier
theorems, see \cite{D}. For instance, when $n=1$, the boundedness
of $m(\xi)$ together with that of $\xi m'(\xi)$ is sufficient for
the
boundedness of $T_m$ on $L^p(\R)$ for all $1<p<\infty$.\\

In the
non-commutative set-up we have an analogue of the Fourier
transform, namely the Weyl transform, which shares many important
properties with the Fourier transform. As is well known, this transform is closely related to the  Fourier transform on the Heisenberg group $ \H^n.$ For $f\in L^1\cap
L^2(\C^n)$, its Weyl transform $W(f)$ is defined as an operator on $ L^2(\R^n) $ by the equation
$$W(f)=\int_{\C^n} f(z) W(z)dz$$
where $W(z):L^2(\R^n)\rightarrow L^2(\R^n)$ is the unitary
transformation given by
$$W(z)\varphi(\xi)=e^{i(x.\xi+\frac{1}{2}x.y)}\varphi (\xi+y),\;\;\;\;\; \varphi\in L^2(\R^n).$$
It is known that $W$ takes $L^2(\C^n)$ onto the space of the
Hilbert-Schmidt operators on $L^2(\R^n)$. Analogous to  Fourier
multipliers one can define Weyl multipliers as follows: given a
bounded linear operator $m$ on $L^2(\R^n)$ we can define an
operator $T_m$ on $L^2(\C^n)$ by
$$W(T_mf)=mW(f)$$
which is certainly bounded on $L^2(\C^n)$. If this operator
extends to a bounded linear operator on $L^p(\C^n)$ then we say
that $m$ is a (left) Weyl multiplier for $L^p(\C^n)$. We can also
define right Weyl multipliers. \\

 In  \cite{M} Mauceri has obtained sufficient conditions on a bounded linear
 operator $m$ on $L^2(\R^n)$ so that the Weyl multiplier $T_m$
 is  bounded on $L^p(\C^n)$. In order to state this result we
 need to introduce some notation. The  spectral decomposition of the Hermite operator is given
 by
 $$H = -\Delta+|x|^2 =\sum^\infty_{j=0}(2j+n)P_j$$
 where $P_j$ are the projections onto the eigenspaces corresponding to the eigenvalues $ (2j+n) $
of the Hermite operator. We can decompose H as
 $$H=\frac{1}{2} \sum_{j=1}^n (A_jA_j^*+A_j^* A_j)$$
 where $A_j=\frac{\partial}{\partial x_j}+x_j$ and $A_j^*=-\frac{\partial}{\partial
 x_j}+x_j$ are the annihilation and creation operators. In terms
 of $A_j$ and $A_j^*$ we define noncommutative derivations
 $$\delta_j m=[m, A_j],~~~~~ \bar{\delta}m=[A_j^*,m].$$
 For multi-indices $\alpha,\beta\in \N^n$ we define
 $$\delta^\alpha=\delta_1^{\alpha_1}\delta_2^{\alpha_2}...\delta_n^{\alpha_n},
~~~~
\bar{\delta}^\beta=\bar{\delta_1}^{\beta_1}\bar{\delta_2}^{\beta_2}...\bar{\delta_n}^{\beta_n}.$$
We say that an operator $S\in B(L^2(\R^n))$ is of class $C^k$ if
$\delta^\alpha \bar{\delta}^\beta S\in B(L^2(\R^n))$ for all
$\alpha, \beta \in \N^n$ such that $|\alpha|+|\beta|\leq k$. We
also define
 $$\chi_k= \sum_{2^{k-1}\leq2j+n<2^k}P_j$$
 The following theorem has been proved in Mauceri \cite{M}.\\

\begin{thm} (\textbf{Mauceri})
  Let $m \in B(L^2(\R^n))$ be an operator of class $C^{n+1}$ which satisfies the following conditions: For all
 $\alpha, \beta\in N^n, |\alpha|+|\beta|\leq n+1$
\be\sup_{k\in \N^n} 2^{k(|\alpha|+\beta|-n)}||(\delta^\alpha
\bar{\delta}^\beta m)\chi_k||^2_{HS}\leq C~.\ee
Then the Weyl multiplier $T_m$ is bounded on $L^p(\C^n),
 1<p\leq 2.$ If the above assumption is replaced by
 \be\sup_{k\in \N^n} 2^{k(|\alpha|+\beta|-n)}||\chi_k(\delta^\alpha \bar{\delta}^\beta m)||^2_{HS}\leq
 C~.\ee
 then $T_m$ is bounded on $L^p(\C^n)$, $2\leq p<\infty$.
\end{thm}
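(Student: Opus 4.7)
The natural starting point is to realise $T_m$ as a twisted convolution operator: since $W$ is a bijection from $L^2(\C^n)$ onto the Hilbert--Schmidt class and $W(f)W(g) = W(f\times g)$ for the twisted convolution $\times$, any left Weyl multiplier $m = W(k)$ satisfies $T_m f = k\times f$, and $L^2$ boundedness of $T_m$ is free from $m \in B(L^2(\R^n))$. Marcinkiewicz interpolation then reduces the $1<p\leq 2$ case to a weak-type $(1,1)$ bound, which I would prove via the standard Calder\'on--Zygmund scheme on $\C^n$ (a space of homogeneous type with Lebesgue measure); the $2\leq p<\infty$ case follows by duality, the adjoint of left twisted convolution with $k$ being right twisted convolution with $k^*$, and this interchanges the left and right positions of $\chi_k$ in the Hilbert--Schmidt norm, converting hypothesis $(1.1)$ into hypothesis $(1.2)$.

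\textbf{From the operator hypothesis to kernel estimates.} To run the Calder\'on--Zygmund scheme it suffices to verify H\"ormander's integrated smoothness condition $\sup_w\int_{|z|>2|w|}|k(z-w)-k(z)|\,dz < \infty$. The bridge between the operator-side hypothesis on $m$ and kernel-side information on $k$ is the Plancherel identity $\|W(f)\|_{HS}^2 = (2\pi)^n\|f\|_2^2$ together with two structural identities: the commutator relations $[m,A_j] = c\,W(z_j k)$ and $[A_j^*,m] = c'\,W(\bar z_j k)$ (so that $\delta^\alpha\bar\delta^\beta m = W(z^\alpha\bar z^\beta k)$ up to constants), and the intertwining of the Hermite spectral projection $\chi_k$ with the spectral projection $Q_k$ of the special Hermite operator $L$ on $\C^n$ in the band $[2^{k-1},2^k)$. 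Under this dictionary Mauceri's hypothesis becomes a dyadic-spectral weighted Sobolev-type bound
$$\sup_k 2^{k(|\alpha|+|\beta|-n)}\|Q_k(z^\alpha\bar z^\beta k)\|_2^2 \leq C,\qquad |\alpha|+|\beta|\leq n+1,$$
with the right-hand placement of $\chi_k$ in $(1.1)$ corresponding to applying $Q_k$ \emph{after} the polynomial weight on the kernel side.

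\textbf{H\"ormander's condition and main obstacle.} With these bounds in hand I would decompose $k = \sum_j Q_j k$ spectrally and estimate the H\"ormander integral as $\sum_j\int_{|z|>2|w|}|Q_j k(z-w)-Q_j k(z)|\,dz$. Applying Cauchy--Schwarz against the weight $|z|^{-(n+1)}$, which is square-integrable on $\{|z|>2|w|\}$, converts each summand into a weighted $L^2$ estimate controlled by the top-order ($|\alpha|+|\beta|=n+1$) hypothesis. The dyadic sum is then split at the critical scale $2^j \sim |w|^{-2}$: for $j$ above this scale the top-order weighted estimate suffices, while for $j$ below it one inserts a mean-value step gaining a factor of $|w|$ to be absorbed into the lower-order spectral bounds. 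The principal obstacle is that $Q_j$ does \emph{not} commute with Euclidean translation, so that the difference $Q_j k(z-w) - Q_j k(z)$ is not itself in the range of $Q_j$; to circumvent this one works with the twisted translation $\tau_w k(z) = e^{\frac{i}{2}\mathrm{Im}(z\cdot\bar w)}k(z-w)$, for which the special Hermite calculus is stable, and then controls the discrepancy via pointwise asymptotics of the special Hermite kernels $\psi_j$. The sharpness of the exponent $n+1$ is dictated precisely by the arithmetic of this dyadic balance, where the top-order gain $2^{-k/2}$ is just enough to sum against the volume growth of the dyadic annulus on $\C^n$.
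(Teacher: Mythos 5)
This theorem is not proved in the paper at all --- it is quoted from Mauceri \cite{M} --- but your outline is essentially Mauceri's own argument, and it is the same machinery the authors redeploy in Section 3 under stronger hypotheses: pass to the twisted convolution kernel $k$ with $W(k)=m$, translate the derivations $\delta_j,\bar\delta_j$ into multiplication by $\bar z_j,z_j$ and the right-hand $\chi_N$ into a special Hermite band projection via Plancherel for $W$, derive weighted $L^2$ bounds on the dyadic pieces of $k$, deduce the twisted H\"ormander condition by Cauchy--Schwarz and a split at the scale $2^j\sim |u|^{-2}$, and finish with Calder\'on--Zygmund theory plus duality. The one structural difference is that you decompose $k$ with sharp spectral projections $Q_j$, whereas Mauceri and the present paper use smooth heat-semigroup differences $S_j=e^{nt_j}e^{-t_jH}-e^{nt_{j+1}}e^{-t_{j+1}H}$, whose rapid decay off the band (Lemma 3.7) supplies the two-sided $\min$ estimates that make the sum over scales converge; with sharp projections you must instead invoke almost-orthogonality, because $Q_j$ does not commute with multiplication by $z^\alpha\bar z^\beta$, so the hypothesis controls $\|Q_j(z^\alpha\bar z^\beta k)\|_2$ rather than $\|z^\alpha\bar z^\beta Q_jk\|_2$; these differ only by a bounded shift of bands, but the point deserves a line. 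Two smaller corrections. First, the condition you need from the outset is the twisted H\"ormander condition $\int_{|z|>2|u|}|k(z-u)e^{-\frac{i}{2}\Im(z\cdot\bar u)}-k(z)|\,dz\le C$, since the Schwartz kernel of $f\mapsto k\times f$ is $k(z-w)e^{\frac{i}{2}\Im(z\cdot\bar w)}$; you do arrive at this later via the twisted translation, but the untwisted condition you first write down is not the right one. Second, under the standard convention the adjoint of $f\mapsto k\times f$ is $g\mapsto k^*\times g$ with $k^*(z)=\overline{k(-z)}$, i.e.\ again a \emph{left} twisted convolution, with symbol $m^*$; the duality step still goes through, because $\|(\delta^\alpha\bar\delta^\beta m)\chi_N\|_{HS}=\|\chi_N(\delta^\alpha\bar\delta^\beta m)^*\|_{HS}$ and $(\delta_j m)^*=\bar\delta_j(m^*)$, which is exactly the interchange of (1.1) and (1.2) you want --- but it happens for this reason, not because the adjoint becomes a right multiplier.
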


In \cite{M} Mauceri has obtained good estimates  on the kernels associated to Weyl multipliers. It turns out that with a bit more effort we can do better than this.\\

\begin{thm} Let $m \in B(L^2(\R^n))$ satisfy the condition (1.1) for all
 $\alpha, \beta\in N^n, |\alpha|+|\beta|\leq n+2$.
Then the operator $T_m$ is bounded on $L^p(\C^n)$, $1<p<\infty$.
Moreover, for all $w\in A_{p/2}(\C^n), 2<p<\infty,$ it also satisfies the weighted norm inequality
$$\int_{\C^n}|T_m f(z)|^p w(z)dz\leq C~ \int_{\C^n}|f(z)|^pw(z)dz$$
for all $ f \in L^p(\C^n, w).$
\end{thm}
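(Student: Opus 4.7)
The plan is to refine Mauceri's kernel estimates by exploiting the extra derivative in condition~(1.1), thereby realizing $T_m$ as an $\ell^2$-valued Calder\'on--Zygmund operator with square-function structure, from which the full-range $L^p$ boundedness and the $A_{p/2}$ weighted inequality will follow.

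First I would decompose $m=\sum_k m\chi_k$. Each piece is Hilbert--Schmidt, so there is a unique $K_k\in L^2(\C^n)$ with $W(K_k)=m\chi_k$, and $T_mf=\sum_k K_k\times f$ in the sense of twisted convolution. The correspondence between the derivations $\delta^\alpha\bar\delta^\beta$ and the operations of multiplication by $z,\bar z$ together with differentiation on the kernel side converts condition~(1.1) for $|\alpha|+|\beta|\le n+2$ into weighted $L^2$-bounds on $K_k$. A Sobolev-type embedding (valid since $n+2$ exceeds half the real dimension of $\C^n$) yields the $\ell^2$-valued Calder\'on--Zygmund kernel bounds
$$\Bigl(\sum_k|K_k(z)|^2\Bigr)^{1/2}\le C|z|^{-2n},\qquad \Bigl(\sum_k|\nabla_zK_k(z)|^2\Bigr)^{1/2}\le C|z|^{-2n-1}\quad(z\ne 0),$$
together with the corresponding H\"ormander-type difference estimate. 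The point of the extra derivative is exactly to make these square sums converge uniformly in scale.

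Next, the vector-valued operator $\vec Tf=(K_k\times f)_k$ is bounded $L^2(\C^n)\to L^2(\C^n;\ell^2)$: indeed $\sum_k\|m\chi_kW(f)\|_{HS}^2\le\|m\|^2\|f\|_{L^2}^2$, since the operator inequality $\sum_k\chi_km^*m\chi_k\le\|m\|^2I$ holds by direct expansion of $\langle \cdot v,v\rangle$. Combined with the kernel estimates, the Benedek--Calder\'on--Panzone theory gives $\vec T\colon L^p(\C^n)\to L^p(\C^n;\ell^2)$ for every $1<p<\infty$. Since the scalar kernel $K=\sum_kK_k$ inherits $|K(z)|\le C|z|^{-2n}$ and $|\nabla K(z)|\le C|z|^{-2n-1}$ from the $\ell^2$-valued bounds (only a bounded number of the $K_k$ contribute at the scale $|z|\sim 2^{-k/2}$), scalar Calder\'on--Zygmund theory applied to $T_m$ itself yields $T_m\colon L^p(\C^n)\to L^p(\C^n)$ for every $1<p<\infty$, proving the first assertion.

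For the $A_{p/2}$ weighted estimate when $p>2$, set $g(f)(z)=\bigl(\sum_k|K_k\times f(z)|^2\bigr)^{1/2}$. The decisive ingredient is the pointwise domination
$$g(f)(z)^2\le C\,M(|f|^2)(z),$$
obtained in the style of Stein's $g^*_\lambda$-function estimate: split $f$ into its restrictions to a ball about $z$ and its complement, use the size bound on $\sum_k|K_k|^2$ for the near part and the $\ell^2$-summable smoothness for the tail. A Littlewood--Paley/Fefferman--Stein comparison adapted to the spectral decomposition $\{\chi_k\}$ then yields $\|T_mf\|_{L^p(w)}\le C\|g(f)\|_{L^p(w)}$ for $w\in A_p$, so that for $p>2$ and $w\in A_{p/2}$,
$$\int_{\C^n}|T_mf|^pw\,dz\le C\int_{\C^n}(g(f)^2)^{p/2}w\,dz\le C\int_{\C^n}M(|f|^2)^{p/2}w\,dz\le C\int_{\C^n}|f|^pw\,dz,$$
by Muckenhoupt's theorem applied at exponent $p/2>1$.

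The main obstacle is the pointwise domination $g(f)^2\le C\,M(|f|^2)$. Mauceri's estimates with only $n+1$ derivatives fall just short of giving $\ell^2$-summable control; the additional derivative in condition~(1.1) is consumed precisely in supplying the uniform convergence of the square sums needed for the $g^*_\lambda$-style splitting to close in both the near-diagonal and the off-diagonal regimes. Establishing the weighted Littlewood--Paley comparison $\|T_mf\|_{L^p(w)}\le C\|g(f)\|_{L^p(w)}$ on the scale of $A_p$ weights, rather than only in the unweighted $L^p$ setting, is a secondary technical step that must also be verified.
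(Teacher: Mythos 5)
Your proposal has two genuine gaps, and the first one is fatal to the route you chose. The pointwise $\ell^2$-valued kernel bounds $\bigl(\sum_k|K_k(z)|^2\bigr)^{1/2}\leq C|z|^{-2n}$ (and the corresponding gradient bound) do \emph{not} follow from condition (1.1) with only $n+2$ derivations. Each derivation $\delta_j$ or $\bar{\delta}_j$ buys exactly one power of $z_j$ or $\bar{z}_j$ on the kernel side, and the only way to pass from Hilbert--Schmidt (i.e.\ $L^2$) information to an $L^\infty$ bound on the kernel is the inequality $|f\times g(z)|\leq \|f\|_2\|g\|_2$ applied to a splitting of $z^\alpha\bar z^\beta K$; this forces $|\alpha|+|\beta|=2n$ to reach decay $|z|^{-2n}$, i.e.\ about $2n$ derivations, not $n+2$. (This is precisely why the paper's Theorem 1.3, which does prove pointwise kernel estimates and full $A_p$ weights, assumes $2(n+1)$ derivations, while Theorem 1.2 with $n+2$ derivations only yields $L^2$-averaged kernel estimates and correspondingly only $A_{p/2}$ weights.) Your appeal to ``Sobolev embedding since $n+2$ exceeds half the real dimension'' conflates the derivations $\delta,\bar\delta$ (which are multiplications by coordinates on the kernel side) with actual derivatives of the kernel; the count that matters is powers of $|z|$, and $n+2$ of them give only the weighted $L^2$ bound $\|\,|z|^{n+1}K\|_2<\infty$, not pointwise decay. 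Your subsequent claim that ``only a bounded number of the $K_k$ contribute at the scale $|z|\sim 2^{-k/2}$'' is also unjustified: the kernels of $m\chi_k$ have no spatial localization. The second gap is the weighted Littlewood--Paley comparison $\|T_mf\|_{L^p(w)}\leq C\|g(f)\|_{L^p(w)}$ for $A_p$ weights, which you flag as ``secondary'' but which is in fact a substantial unproved ingredient: it amounts to a weighted reverse square-function inequality for the dyadic spectral projections $\chi_k$ of the special Hermite operator, and nothing in your argument supplies it.

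The correct mechanism, which your final displayed chain of inequalities almost rediscovers, is to work entirely with $L^2$-averaged quantities: from $(M_{n+2})$ one proves the $L^2$-H\"ormander condition
$$\Bigl(\int_{|z|>2|u|}|z|^{2n+1}\,|k(z-u)e^{-\frac{i}{2}\Im(z\cdot\bar u)}-k(z)|^2\,dz\Bigr)^{1/2}\leq C|u|^{1/2},$$
using a dyadic decomposition in the heat parameter ($m=m_0-\sum_j mS_j$ with $S_j$ differences of Hermite semigroups) rather than in the spectrum. This condition directly yields the twisted sharp maximal function estimate $\tilde M^{\sharp}(T_mf)\leq CM_2f$ by splitting $f$ near and far from a cube and applying Cauchy--Schwarz against $|u-w|^{-2n-2\delta}$ on the far part; then the weighted Fefferman--Stein inequality for $w\in A_p$ plus the boundedness of $M$ on $L^{p/2}(w)$ for $w\in A_{p/2}$ closes the argument exactly as in your last display, with $M_2f=(M(|f|^2))^{1/2}$ playing the role of your $g(f)$ but requiring no Littlewood--Paley comparison and no pointwise kernel decay.
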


Note that the weight function $w$ is taken from $A_{p/2}$, not
from $A_p$ as one would expect. If we increase the number of non-commutative derivatives from $n+2$ to $2n+2$ then we can allow
$A_p$ weights in the weighted norm
inequality.\\
\begin{thm} Let $m \in B(L^2(\R^n))$ satisfy the condition (1.1) for all
 $\alpha, \beta\in N^n, |\alpha|+|\beta|\leq 2(n+1)$. Then, for all $w\in A_{p}(\C^n), 1<p<\infty,$
 $$\int_{\C^n}|T_m f(z)|^p w(z)dz\leq C~ \int_{\C^n}|f(z)|^pw(z)dz$$
for all $ f \in L^p(\C^n, w).$

\end{thm}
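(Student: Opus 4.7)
The plan is to show that $T_m$ is a genuine scalar--valued Calder\'on--Zygmund operator on the Euclidean space $\C^n \cong \R^{2n}$, so that the weighted $A_p$ inequality becomes a direct consequence of the standard weighted Calder\'on--Zygmund theory (cf.\ \cite{D}). Writing $m = W(K)$, so that $T_m f = f \times K$ is a twisted convolution operator, and observing that the twisted convolution kernel coincides with the ordinary convolution kernel up to a bounded unimodular phase, the task reduces to establishing the pointwise estimates $|K(z)| \le C|z|^{-2n}$ and $|\nabla K(z)| \le C|z|^{-2n-1}$ for $z \ne 0$.

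The first step is to translate condition (1.1) into a statement about $K$. The commutator identities between $W(z)$ and the operators $A_j$, $A_j^*$ show that, up to non--zero constants, $\delta^\alpha \bar\delta^\beta m = W(z^\beta \bar z^\alpha K)$. Combined with Plancherel for the Weyl transform and the identity $\chi_k = W(\psi_k)$, where $\psi_k$ is the Laguerre--type kernel associated with the dyadic Hermite band around $2^k$, condition (1.1) becomes the dyadic $L^2$ estimate
\begin{equation*}
\bigl\|(z^\beta \bar z^\alpha K)\times \psi_k\bigr\|_{L^2(\C^n)}^2 \;\le\; C\, 2^{-k(|\alpha|+|\beta|-n)},
\end{equation*}
valid for every $|\alpha|+|\beta|\le 2(n+1)$.

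The key step is to pass from these spectrally localised $L^2$ bounds to pointwise bounds on $K$. Writing $K = \sum_k K\times \psi_k$, each dyadic piece $K\times\psi_k$ is spectrally localised at level $2^k$, and an $L^2 \to L^\infty$ estimate for such spectrally localised functions (coming from $L^\infty$ bounds for Laguerre functions of type $(n-1)$), combined with the polynomial--weighted $L^2$ bounds above for all multi--indices with $|\alpha|+|\beta|\le 2(n+1)$, yields pointwise decay of the form
\begin{equation*}
|(K\times \psi_k)(z)|\le C\, 2^{kn}\bigl(1+2^{k/2}|z|\bigr)^{-2(n+1)}.
\end{equation*}
Summing in $k$ gives $|K(z)|\le C|z|^{-2n}$, and the same scheme applied with one additional derivative (available precisely because the hypothesis allows a budget of $2(n+1)$ derivatives rather than $2n+1$) produces the corresponding bound on $\nabla K$.

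The main obstacle is this Sobolev--type step converting spectral $L^2$ bounds into pointwise bounds: one has to balance the polynomial growth introduced by the weights $z^\beta \bar z^\alpha$ against the concentration scale $|z|\sim 2^{-k/2}$ of each Littlewood--Paley piece and then sum cleanly across $k$. It is precisely here that the extra $n$ derivatives beyond Theorem~1.2 are spent: they furnish the Sobolev embedding gain needed to upgrade an $L^2$--in--spectrum bound to an $L^\infty$--in--space bound on each dyadic piece, whereas the derivative count in Theorem~1.2 only sufficed for a vector--valued square function type estimate, which explains the weaker $A_{p/2}$ class of weights appearing there.
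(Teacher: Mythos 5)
Your kernel estimates are in the right spirit --- they are essentially the paper's Proposition 3.6, except that the paper localizes with the heat-semigroup differences $S_j=e^{nt_j}e^{-t_jH}-e^{nt_{j+1}}e^{-t_{j+1}H}$ rather than with the sharp projections $\chi_k$, precisely because Mauceri's Lemma 4.4 controls $\delta^\gamma\bar{\delta}^\rho S_j$ whereas derivations of sharp spectral projections are not well behaved. The genuine gap is in your reduction step. The operator $T_mf=k\times f$ has integral kernel $K(z,w)=k(z-w)e^{\frac{i}{2}\Im(z\cdot\bar{w})}$, and the unimodular phase is \emph{not} harmless for the regularity estimates: $\nabla_zK(z,w)$ contains the term $\tfrac{1}{2}|w|\,|k(z-w)|$, which is not $O(|z-w|^{-2n-1})$ for large $|w|$. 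So $T_m$ is not a standard Calder\'on--Zygmund operator and you cannot quote the standard weighted theory from \cite{D}; it is an oscillatory singular integral, and the weighted bounds require either the Lu--Zhang/Chanillo--Christ type machinery or, as the paper does, a rerun of the sharp-maximal-function and good-$\lambda$ arguments with a twisted sharp function $\tilde{M}^\sharp$ in which the comparison constant is $\tilde{f}_Q=\frac{1}{|Q|}\int_Qf(z)e^{-\frac{i}{2}\Im(z\cdot\bar{u})}dz$, $u$ the center of $Q$.

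A second, related defect: even inside the twisted framework, the two summed bounds $|k(z)|\le C|z|^{-2n}$ and $|\nabla k(z)|\le C|z|^{-2n-1}$ do not verify the required twisted H\"ormander condition $|k(z-u)e^{-\frac{i}{2}\Im(z\cdot\bar{u})}-k(z)|\le C|u|^\delta|z|^{-2n-\delta}$. The phase-perturbation term is bounded by $|k(z)|\,|z|\,|u|$, and with only $|k(z)|\le C|z|^{-2n}$ this gives $|u|\,|z|^{-2n+1}$, which is useless. One must keep the dyadic decomposition, prove for each piece the pair of bounds $|k_j(z)|\le C|z|^{-2n-2}$ and $|k_j(z)|\le C\,t_{j+1}^{1/2}|z|^{-2n-1}$ (and the analogues for $\nabla k_j$), interpolate to produce the factor $\min\bigl(t_{j+1}^{1/4}|u|^{-1/2},\,|u|^{1/2}t_{j+1}^{-1/4}\bigr)$, and only then sum over $j$. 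Your own dyadic estimate $|(K\times\psi_k)(z)|\le C\,2^{kn}(1+2^{k/2}|z|)^{-2(n+1)}$ in fact carries the extra decay needed for this, but you throw it away by summing in $k$ before confronting the regularity of the full kernel.
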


In the definition of the Weyl transform we have made use of the unitary operators $ W(z) $ acting on $ L^2(\R^n)$ and mentioned that these are related to certain representations of the Heisenberg group.  As a manifold $ \H^n = \C^n \times \R $ and  the group law on $H^n$
is given by $(z,t)(w,s)=(z+w, t+s+\frac{1}{2}\Im (z.\bar{w}))$.
For each $ \lambda \in \R\setminus \{0\}$ we have an irreducible
representation $\pi_\lambda$ of $H^n$ realised on $L^2(\R^n)$. The
explicit expression for $\pi_\lambda$ is
$$\pi_\lambda(z,t)\varphi(\xi)=e^{i\lambda t } e^{i\lambda (x.\xi+\frac{1}{2}x.y)}\varphi(\xi+y)$$
where $\varphi\in L^2(\R^n)$ and $z=x+iy$. It is clear that
$\pi_1(z,0)=W(z)$. Analogous to the Weyl transform we can also define the operators $ W_\lambda(f) $ by
$$ W_\lambda(f) = \int_{\C^n} f(z) \pi_\lambda(z,0) dz. $$ These are also called the Weyl transforms in the literature.\\

The Fourier transform of $f\in L^1\cap
L^2(H^n)$ is defined to be the operator valued function
$$\hat{f}(\lambda)=\int_{H^n} f(z,t)\pi_\lambda(z,t) dzdt.$$
It is known that for each $\lambda\in \R\setminus \{0\}$,
$\hat{f}(\lambda)$ is a Hilbert-Schmidt operator and we have
inversion and Plancherel theorems, see e.g \cite{T1}. In analogy
with Fourier  multipliers on $\R^n$ and Weyl multipliers on $\C^n$
we can define multipliers for the (group) Fourier transform on
$H^n$. Given a family of bounded linear operators $\{m(\lambda):
\lambda\in \R^*\}$ on $L^2(\R^n)$ we can define $T_m$ on $L^1\cap
L^2(H^n)$ by
$$\widehat{(T_m f)}(\lambda)=m(\lambda)\hat{f}(\lambda).$$
If the family $\{m(\lambda):\lambda\in \R^*\}$ are uniformly
bounded on $L^2(\R^n)$, it is clear (from Plancherel theorem) that
$T_m$ is bounded on $L^2(H^n)$. We are interested in finding
sufficient conditions on $m(\lambda)$ so that $T_m$ will extend to
$L^p(H^n)$ as a bounded
operator.\\

In this generality not much is known except for the results proved
in the papers by Mauceri-de Michele \cite{MM} (for $ n =1 $) and
Lin \cite{L}, for general $ \H^n.$ Lin has also looked at the
bounedness of $ T_m $ on Hardy spaces. In \cite{M2} Mauceri
studied zonal multipliers on the Heisenberg group.   When
$m(\lambda)= \varphi(H(\lambda))$, $H(\lambda)$ being the scaled
Hermite operator $(-\Delta+\lambda^2 |x|^2)$ on $\R^n$, the
operator $T_m $ becomes $\varphi(\mathcal{L})$, where
$\mathcal{L}$ is the sublaplacian on $H^n$(which plays the role of
$\Delta$ for $H^n$). There are several works on the $L^p$
boundedness of $\varphi(\mathcal{L})$ and
the best possible result has been obtained by  Muller-Stein \cite{MS} and Hebisch \cite{H}.\\

We now bring out the connection between Fourier multipliers on the
Heisenberg group and operator valued multipliers for the Euclidean
Fourier transform. Recalling the definition of $ \hat{f}(\lambda)
$ and noting that $ \pi_\lambda(z,t) = e^{i\lambda t} \pi_\lambda(z,0) $
we see that
$$ \hat{f}(\lambda) = \int_{\C^n} \left( \int_{-\infty}^\infty  f(z,t) e^{i\lambda t} dt \right) \pi_\lambda(z,0) dz.$$
Denoting the inner integral, which is the inverse Fourier
transform of $ f $ in the central variable, by $ f^\lambda(z) $ we
have $ \hat{f}(\lambda) = W_\lambda(f^\lambda).$ With this
notation, the Fourier multiplier $ T_m $ takes the form
$$ T_mf(z,t) = \int_{-\infty}^\infty e^{-i\lambda t} M(\lambda)f^\lambda(z) d\lambda $$
where the operator $ M(\lambda) $ is related to $ m(\lambda) $ by the equation
$$ W_\lambda(M(\lambda)f^\lambda) = m(\lambda)\hat{f}(\lambda) = m(\lambda)W_\lambda(f^\lambda) .$$
This means that $ M(\lambda)$ is a Weyl multiplier for each $\lambda\in \R^*$.\\

We can identify $ L^p(\H^n) $ with the space $ L^p(\R, L^p(\C^n)) $ consisting of all functions $ F $ on $ \R $
taking values in the Banach space $ L^p(\C^n) $ for which the function $ t \rightarrow  \|F(t)\|_{L^p(\C^n)} $ belongs to $ L^p(\R).$
The identification is given by the correspondence  $ F(t)(z) = f(z,t) $ for  $ f \in L^p(\H^n). $
With this identification, note that  the function $ f^\lambda \in L^p(\C^n) $ is nothing but the inverse Fourier transform of $F$:
$$ f^\lambda  = \int_{-\infty}^\infty  e^{i\lambda t} F(t) dt = \hat{F}(-\lambda)$$ where the integral is taken in the sense of Bochner.
Thus the action of the Heisenberg group Fourier multiplier $ T_m $ on $ f $ can be viewed as
$$ T_mf(\cdot,t) = \int_{-\infty}^\infty e^{i\lambda t} M(-\lambda)\hat{F}(\lambda) d\lambda.$$
This means that Fourier multipliers on the Heisenberg group can be viewed as
operator valued Euclidean Fourier multipliers acting on Banach space valued functions.\\

More generally, suppose $ X $ and $ Y $ are Banach spaces and $ \lambda \rightarrow M(\lambda) $ is a
function taking values in $ B(X,Y),$  the Banach space of bounded
linear operators from $ X $ into $ Y.$  Then  we can define
operator valued Fourier multipliers for functions taking values in
$ X$  by
$$ T_Mf(t) =  (2\pi)^{-1/2}  \int_{-\infty}^\infty  e^{-i\lambda t} M(\lambda)\hat{f}(\lambda)   d\lambda .$$
Then for the operator $ T_M $ to extend as a bounded linear
operator from $ L^p(\R, X) $ into $ L^p(\R,Y) $ the spaces $ X $
and $ Y $ have to be UMD spaces. Moreover, unlike the scalar
valued case, just the boundedness of the families $ \{ M(\lambda) : \lambda \in \R \}  $
and $ \{ \lambda M'(\lambda): \lambda \in \R \} $ is not enough.  In this
context L. Weis \cite{W} has  proved the following multiplier theorem for the Fourier transform.\\

\begin{thm} (\textbf{Weis}) Assume that $ X $ is UMD and $ M(\lambda) \in B(X,X)$ for each $ \lambda \in \R.$
Suppose $\{ M(\lambda):\lambda\in \R\}$ and $\{\lambda
M'(\lambda):\lambda\in \R\}$ are both R-bounded. Then the operator
valued Fourier multiplier $T_M$ defined  by
$$ T_Mf(t) = (2\pi)^{-1/2} \int_{-\infty}^\infty e^{-i\lambda t} M(\lambda)\hat{f}(\lambda) $$
extends to  $L^p(\R,X)$  as a bounded operator for all
$1<p<\infty$.
\end{thm}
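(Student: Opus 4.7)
The plan is to follow the dyadic Mihlin--H\"ormander strategy, adapted to the UMD--valued setting, where R--boundedness replaces pointwise operator norm bounds and the Bourgain--McConnell vector--valued Littlewood--Paley inequality replaces the scalar square function. Fix a smooth partition of unity $\varphi_k(\lambda) = \varphi(2^{-k}\lambda)$, $k \in \Z$, with $\varphi \in C_c^\infty(\R)$ supported in $\{ 1/2 \le |\lambda| \le 2 \}$ and $\sum_k \varphi_k \equiv 1$ on $\R \setminus \{0\}$, and let $\Delta_k$ denote the associated Littlewood--Paley projections on $L^p(\R,X)$. Since $X$ is UMD, for $1 < p < \infty$ one has
$$ \|f\|_{L^p(\R,X)} \sim \Big( \mathbb{E} \Big\| \sum_k r_k \Delta_k f \Big\|_{L^p(\R,X)}^p \Big)^{1/p}, $$
where $(r_k)$ is a Rademacher sequence; this equivalence is what makes the whole argument available in the vector--valued setting.

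On each dyadic annulus $I_k = \{ 2^{k-1} \le |\lambda| \le 2^{k+1} \}$ I would expand $\varphi_k M$ in a Fourier series of period proportional to $2^k$:
$$ \varphi_k(\lambda) M(\lambda) = \sum_{n \in \Z} c_{n,k}\, \psi_k(\lambda)\, e^{i n \lambda / 2^k}, $$
where $\psi_k$ is a slightly fattened smooth cutoff equal to $1$ on the support of $\varphi_k$ and supported in a comparable annulus. The operator--valued coefficients $c_{n,k} \in B(X)$ are obtained by integrating $\varphi_k M$ against complex exponentials over one period; one integration by parts transfers a factor of $n$ onto $(\varphi_k M)'$, and since $\lambda \sim 2^k$ on $I_k$ this produces an integrand of the form $(\text{bounded scalar}) \cdot \lambda M'(\lambda)$, plus a cutoff term $(\text{bounded scalar})\cdot M(\lambda)$ coming from $\varphi_k'$. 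By Kahane's contraction principle, the integral of an R--bounded family against a uniformly bounded scalar kernel remains R--bounded with quantitative control, so the two hypotheses force $\{ (1 + |n|)\, c_{n,k} : k \in \Z\}$ to be R--bounded uniformly in $n$.

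Combining the pieces, $T_M f$ decomposes as
$$ T_M f(t) = \sum_{n \in \Z} \sum_{k \in \Z} c_{n,k}\, \tilde{\Delta}_k f(t - \tau_{n,k}), $$
where $\tilde{\Delta}_k$ is the Littlewood--Paley projection associated with $\psi_k$ and $\tau_{n,k} \sim n/2^k$. For each fixed $n$, the UMD Littlewood--Paley equivalence above, the isometry of translations on $L^p(\R,X)$, and the R--boundedness of $\{c_{n,k}\}_k$ yield
$$ \Big\| \sum_k c_{n,k}\, \tilde{\Delta}_k f(\cdot - \tau_{n,k}) \Big\|_{L^p(\R,X)} \lesssim (1 + |n|)^{-1}\, \|f\|_{L^p(\R,X)}, $$
since R--bounded operators can be absorbed into the Rademacher average inside the $L^p$--norm. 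Summing in $n$ gives the desired bound. The main obstacle is the second step: converting the R--boundedness hypotheses on $\{M(\lambda)\}$ and $\{\lambda M'(\lambda)\}$ into R--boundedness with $n^{-1}$--decay of the Fourier coefficients $c_{n,k}$ across scales $k$. This is precisely where uniform operator bounds would fail and R--boundedness becomes essential --- the central insight of Weis's multiplier theorem.
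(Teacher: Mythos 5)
The paper does not actually prove this theorem; it is quoted from Weis \cite{W}, so the only comparison available is with the known proof there. Your framework (UMD $\Rightarrow$ randomized Littlewood--Paley decomposition, dyadic localization of $M$, contraction principle to absorb R-bounded coefficient families) is the right one, and your observation that integral averages of an R-bounded family against a scalar kernel of bounded mass remain R-bounded is correct. However, there is a genuine gap at exactly the point you flag as central. One integration by parts, using only the R-boundedness of $\{M(\lambda)\}$ and $\{\lambda M'(\lambda)\}$, yields coefficients with R-bound $O((1+|n|)^{-1})$ and no better, and $\sum_{n\in\Z}(1+|n|)^{-1}$ diverges, so ``summing in $n$'' does not close the argument. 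In the scalar Mihlin theorem this is repaired by Parseval: $\sum_n |n\,c_{n,k}|^2 \simeq 2^k\|(\varphi_k M)'\|_{L^2}^2 \lesssim 1$, and Cauchy--Schwarz against $\sum n^{-2}$ gives $\ell^1$ control of the coefficients. There is no Plancherel identity for R-bounds, so this repair is unavailable, and with only one derivative the Fourier-series route does not close. A second, related problem: the shifts $\tau_{n,k}\sim n2^{-k}$ depend on $k$, so they cannot be pulled out of the randomized sum $\sum_k r_k c_{n,k}\tilde{\Delta}_k f(\cdot-\tau_{n,k})$ as a single isometry; the modulated symbols $e^{in\lambda 2^{-k}}\psi_k(\lambda)$ satisfy Mihlin-type bounds only with constants growing in $|n|$, so the translated square function itself costs a factor growing with $|n|$, which makes the summation in $n$ worse, not better.

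Weis's actual argument avoids Fourier series. He first proves the theorem for multipliers that are constant on dyadic intervals, where boundedness is equivalent to R-boundedness of the set of values by Bourgain's vector-valued Littlewood--Paley inequality, and then represents a general $M$ on each dyadic interval $I_k$ as $M(2^{k-1}) + \int_{I_k} 1_{\{s\le\lambda\}}\,\bigl(sM'(s)\bigr)\,\frac{ds}{s}$. The cut-off multipliers $1_{\{s\le\lambda\}}\varphi_k(\lambda)$ are handled uniformly by the UMD-boundedness of the vector-valued Hilbert transform, and the integral in $s$, taken against the measure $ds/s$ of bounded mass on $I_k$, is absorbed by the convexity property of R-bounds applied to the family $\{sM'(s)\}$. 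If you wish to keep your Fourier-series scheme you would need to assume in addition that $\{\lambda^2 M''(\lambda)\}$ is R-bounded, so that two integrations by parts give $O(n^{-2})$ coefficients; that proves a strictly weaker theorem than the one stated.
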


The R-boundedness of  a family of operators $ \tau \subset B(X,X)
$ is defined using Rademacher functions $ r_j, j \in \N.$  For any
sequence $ M_j \in \tau $ and $ x_j \in X $ it is required that
there is a constant $ C $ such that
$$ \int_0^1 \| \sum_{j=1}^\infty r_j(u) M_jx_j \| du \leq C~ \int_0^1 \| \sum_{j=1}^\infty r_j(u) x_j \| du .$$
When $ X = L^p(\R^n) $ the R-boundedness is equivalent to the
vector valued inequality
$$ \| \left(\sum_{j=1}^\infty |M_jf_j|^2 \right)^{1/2} \|_p \leq C~ \| \left(\sum_{j=1}^\infty |f_j|^2 \right)^{1/2} \|_p $$ for all
choices of $ M_j \in \tau $ and $ f_j \in L^p(\R^n).$ Given a
family  $ \{  M(\lambda): \lambda \in \R \} $ of bounded linear operators acting on $
L^p(\R^n) $ it would be interesting to find some conditions which
will imply the R-boundedness. There are some special cases where
we do have
such conditions guaranteeing the R-boundedness.\\

Let $ H(\lambda) = -\Delta+\lambda^2 |x|^2 $ be the scaled Hermite
operator on $ \R^n $ whose  spectrum is $ \{ (2k+n)|\lambda|: k \in \N \}.$
Given a bounded function $ \varphi $ defined on the half line $
[0,\infty) $ we can define the operator $ \varphi(H(\lambda))$ by
spectral theorem. Taking $ M(\lambda) =  \varphi(H(\lambda))$ we
can consider the Fourier multiplier
$$ T_Mf(x,t) = (2\pi)^{-1/2} \int_{-\infty}^\infty e^{-i\lambda t} \varphi(H(\lambda))f^\lambda(x) d\lambda $$
where $ f \in L^p(\R,\R^n) = L^p(\R^{n+1}) $ and $ f^\lambda $
stands for the inverse Fourier transform of $ f $ in the $t $
variable. In this case the operator $ T_M $ can be interpreted as
a spectral multiplier for the Grushin operator $ -\Delta-|x|^2
\partial_t^2 $ on $ \R^{n+1}$ and such multipliers have been
studied in \cite{JST} and \cite{Ma}. It has been shown in
\cite{JST} that standard Hormander conditions on $ \varphi $ lead
to
R-Boundedness of the families $ \{ M(\lambda): \lambda \in \R \} $ and $ \{ \lambda M'(\lambda): \lambda \in \R \}.$\\

Another case where the R-boundedness of the multipliers can be proved  is given by Weyl multipliers. Using Theorem 1.3 we can easily prove the following result.

\begin{thm} For each $ \lambda \in \R $ let $ m(\lambda) \in B(L^2(\R^n)) $  and let $ \tilde{M}(\lambda)$ be the corresponding Weyl multiplier defined by $ W(\tilde{M}(\lambda)f^\lambda)= m(\lambda) W(f^\lambda).$ In our earlier notation, $ \tilde{M}(\lambda) = T_{m(\lambda)}.$
Assume that for each $ \lambda $ both  $ m(\lambda) $ and  $ \lambda
m'(\lambda) $ satisfy  the condition (1.1) for all
 $\alpha, \beta\in N^n, |\alpha|+|\beta|\leq 2n+2$. Then the operator valued Fourier multiplier defined by
$$ T_{\tilde{M}}f(z,t) = \int_{-\infty}^\infty e^{-i\lambda t} \tilde{M}(\lambda)f^\lambda(z) d\lambda $$
extends to be a  bounded operator on $ L^p (\R,L^p(\C^n))$ for any
$ 1< p < \infty.$
\end{thm}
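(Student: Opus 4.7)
The plan is to apply the Weis multiplier theorem (Theorem 1.4) with $X = Y = L^p(\C^n)$. Since $L^p(\C^n)$ is UMD for every $1<p<\infty$, the whole argument reduces to checking R-boundedness on $L^p(\C^n)$ of the two families
$$ \{ \tilde M(\lambda) : \lambda \in \R \} \quad \text{and} \quad \{ \lambda \tilde M'(\lambda) : \lambda \in \R \}. $$
Once this is done, Theorem 1.4 delivers the boundedness of $T_{\tilde M}$ on $L^p(\R,L^p(\C^n))$, which is exactly $L^p(\H^n)$ under the identification recalled in the introduction.

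First I would identify the derivative family as a family of Weyl multipliers. Because the Weyl transform $W$ does not depend on $\lambda$, differentiating the defining identity $W(\tilde M(\lambda)f)=m(\lambda)W(f)$ in $\lambda$ gives $\tilde M'(\lambda)=T_{m'(\lambda)}$ and hence $\lambda\tilde M'(\lambda)=T_{\lambda m'(\lambda)}$. By hypothesis, both symbols $m(\lambda)$ and $\lambda m'(\lambda)$ satisfy condition (1.1) for $|\alpha|+|\beta|\leq 2n+2$ with a constant uniform in $\lambda$. Applying Theorem 1.3 separately to each $\lambda$ yields, for every $w\in A_p(\C^n)$,
$$ \int_{\C^n} |\tilde M(\lambda) f(z)|^p w(z)\,dz \le C \int_{\C^n} |f(z)|^p w(z)\,dz, $$
and the analogous estimate with $\tilde M(\lambda)$ replaced by $\lambda \tilde M'(\lambda)$, where the constant $C$ depends only on $[w]_{A_p}$ and on the constant in (1.1) but not on $\lambda$.

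Next I would invoke the Rubio de Francia extrapolation principle: a family of sublinear operators on $L^p(\C^n)$ whose $L^p(w)$-norms are uniformly controlled in terms of $[w]_{A_p}$ automatically satisfies the vector-valued inequality
$$ \Bigl\| \Bigl( \sum_{j=1}^\infty |T_j f_j|^2 \Bigr)^{1/2} \Bigr\|_p \le C \Bigl\| \Bigl( \sum_{j=1}^\infty |f_j|^2 \Bigr)^{1/2} \Bigr\|_p. $$
By the equivalence of this $\ell^2$-estimate with R-boundedness on $L^p$ recorded just after Theorem 1.4, both $\{\tilde M(\lambda)\}$ and $\{\lambda \tilde M'(\lambda)\}$ are R-bounded on $L^p(\C^n)$. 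Plugging into Theorem 1.4 finishes the proof.

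The main obstacle is the uniformity of the constant in Theorem 1.3, specifically verifying that the $L^p(w)$-bound for $T_{m(\lambda)}$ depends on $m(\lambda)$ only through the constant appearing in (1.1) and on $w$ only through $[w]_{A_p}$. This ought to be transparent from the proof of Theorem 1.3, since its hypothesis is phrased purely in terms of that constant; but the extrapolation step, and hence the whole argument, relies on it. A minor secondary issue is the rigorous justification of the operator identity $\tilde M'(\lambda)=T_{m'(\lambda)}$, which follows from the norm continuity of $\lambda\mapsto m(\lambda)$ on $L^2(\R^n)$ combined with Plancherel for the Weyl transform.
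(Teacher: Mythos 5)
Your proposal is correct and follows essentially the same route as the paper: identify $\lambda\tilde M'(\lambda)=T_{\lambda m'(\lambda)}$ (possible because $W$ is $\lambda$-independent), deduce uniform weighted norm inequalities for both families from Theorem 1.3, upgrade these to R-boundedness via Rubio de Francia's extrapolation, and conclude with Weis's theorem on the UMD space $L^p(\C^n)$. The only cosmetic difference is that the paper invokes the $L^2(w)$, $w\in A_2$, case of Theorem 1.3 rather than the $L^p(w)$, $w\in A_p$, case, which is immaterial.
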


Since we are considering $ X = L^p(\C^n) $ the R-boundedness of
the family $ \{\tilde{M}(\lambda):\lambda\in \R\} $ is equivalent
to the vector valued inequality for the sequence $
\tilde{M}(\lambda_j) $ for any choice of $ \lambda_j \in \R.$
According to a theorem of Rubio de Francia \cite{R} the vector
valued inequality will be a consequence of the weighted norm
inequality for $ \tilde{M}(\lambda)$:
$$ \int_{\C^n} |\tilde{M}(\lambda)f(z)|^2 w(z) dz \leq C~ \int_{\C^n} |f(z)|^2 w(z) dz $$ for all $ w \in A_2(\C^n) $ uniformly in $ \lambda $ which will follow from Theorem 1.3.\\

In  \cite{W} Weis has looked at the necessity of the conditions in
his theorem. He has proved  the following converse to his theorem.
Suppose the families $\{ M(\lambda):\lambda\in\R\} $ and $\{
\lambda M'(\lambda):\lambda\in \R\} $ are uniformly bounded on $
X.$ If the operator $ T_M $ is bounded on $ L^p(\R,X) $ then for
any $ a \neq 0 $ the family $ \{a2^nM(a2^n): n \in \Z \}$ is
R-bounded. However, it is not known if the R-boundedness of $
\{\lambda M'(\lambda):\lambda\in \R\} $ is necessary or not.
Our investigations on  Fourier multipliers on the Heisenberg group have led us to the following result.\\

\begin{thm}
Let $T_M$ be as in the theorem of Weis. The R-boundedness of
$\{\lambda M'(\lambda):\lambda\in \R\}$ is not necessary for the
boundedness of $T_M$ on $L^p(\R,X)$.
\end{thm}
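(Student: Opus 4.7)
The idea is to write down an explicit counterexample in the UMD space $X=L^p(\C^n)$, $1<p<\infty$. Fix any bounded non-constant real-valued function $a\colon\C^n\to\R$ and let $A\in B(L^p(\C^n))$ denote the multiplication operator $Af(z)=a(z)f(z)$. Since $A$ is bounded, the operator
$$ M(\lambda):= e^{i\lambda A},\qquad \lambda\in\R, $$
is defined by its norm-convergent exponential series; concretely, $M(\lambda)$ is multiplication by the unimodular function $e^{i\lambda a(\cdot)}$, so every $M(\lambda)$ is an isometry of $L^p(\C^n)$. Since $|M(\lambda_j)f_j|=|f_j|$ pointwise, the square-function inequality characterizing R-boundedness on $L^p$ holds with equality, hence $\{M(\lambda):\lambda\in\R\}$ is R-bounded; thus the ``$M$-side'' hypothesis of the theorem of Weis is satisfied.

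\smallskip
\noindent To verify the boundedness of $T_M$, identify $L^p(\R,L^p(\C^n))$ with $L^p(\C^n\times\R)$ and write $F(z,t)=f(t)(z)$. A one-dimensional Fourier inversion in $t$ gives
$$ T_M F(z,t) \;=\; (2\pi)^{-1/2}\int_\R e^{-i\lambda t}\,e^{i\lambda a(z)}\,\widehat F(z,\lambda)\,d\lambda \;=\; F\bigl(z,\,a(z)-t\bigr) $$
(up to a harmless sign depending on the Fourier convention). The map $(z,t)\mapsto(z,a(z)-t)$ is a measure-preserving bijection of $\C^n\times\R$, so $T_M$ is in fact an $L^p$-isometry, hence bounded on $L^p(\R,X)$ for every $1<p<\infty$.

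\smallskip
\noindent Finally, because $A$ is bounded, $M$ is norm-differentiable with $M'(\lambda)=iAM(\lambda)$, which is multiplication by $ia(z)e^{i\lambda a(z)}$. Consequently $\bigl\|\lambda M'(\lambda)\bigr\|_{B(L^p(\C^n))}=|\lambda|\,\|a\|_\infty\to\infty$ as $|\lambda|\to\infty$, so $\{\lambda M'(\lambda):\lambda\in\R\}$ is not even uniformly bounded, and \emph{a fortiori} not R-bounded. There is essentially no obstacle: the construction is explicit and the verifications are direct. The conceptual point is that the phase cancellation between $e^{-i\lambda t}$ and $e^{i\lambda a(z)}$ inside the defining integral collapses $T_M$ to a measure-preserving change of variables, a mechanism that is entirely invisible to any operator-norm bound on $\lambda M'(\lambda)$; in particular, the scalar Mihlin-type ``necessity'' of bounded $\lambda m'(\lambda)$ does not persist for operator-valued multipliers on UMD spaces.
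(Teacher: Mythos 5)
Your construction is correct and proves the theorem as stated, but it takes a genuinely different and much more elementary route than the paper's. The paper takes $X=L^p(\C^n)$ and $m(\lambda)=A_j(\lambda)H(\lambda)^{-1/2}$, so that $T_M$ is a Riesz transform on $\H^n$, bounded on $L^p(\H^n)$ by Christ and Geller; it then invokes the decomposition $2\lambda\frac{d}{d\lambda}M(\lambda)=[B,M(\lambda)]+T^\lambda_{[x\cdot\nabla,m(\lambda)]}+T^\lambda_{2\lambda m'(\lambda)}$ of Lemma 2.4, notes that the last two terms are harmless, and shows by an explicit Hermite-basis computation that the commutator term $[B,T^1_{\tilde{m}(\lambda)}]$ is not even bounded on $L^2(\C^n)$, whence $\{\lambda M'(\lambda)\}$ cannot be R-bounded. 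Your modulation example $M(\lambda)=e^{i\lambda A}$ reaches the same conclusion with almost no machinery: $T_M$ collapses to a measure-preserving change of variables and is an isometry, while $\|\lambda M'(\lambda)\|=|\lambda|\,\|a\|_\infty\to\infty$. What the paper's example buys is that it is intrinsic to the Heisenberg-group setting that motivates the whole paper, and it isolates the structural obstruction (the term $[B,M(\lambda)]$ is not a Weyl multiplier) that then drives Theorems 1.9 and 1.10; what yours buys is brevity and independence from all of that apparatus. One caveat applies equally to both arguments: each defeats R-boundedness of $\{\lambda M'(\lambda)\}$ simply by defeating its uniform boundedness (your $\lambda M'(\lambda)$ has operator norm $|\lambda|\,\|a\|_\infty$; the paper's is not even a bounded operator for a single $\lambda$). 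Your example is essentially the scalar modulation $e^{ia\lambda}$ promoted to operator-valued form, so it does not touch the sharper question recorded just before the theorem --- whether R-boundedness of $\{\lambda M'(\lambda)\}$ is forced when that family is additionally assumed uniformly bounded, as in Weis's converse --- but neither does the paper's proof, so this is not a defect of your argument relative to the stated claim. A trivial remark: ``non-constant'' can be weakened to $a\not\equiv 0$, which is all the norm computation uses.
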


As we have noted, when $ X = L^p(\C^n) $ the space $ L^p(\R, L^p(\C^n))$ can be
identified with $ L^p(\H^n) $ where $ \H^n$ is the Heisenberg
group. By considering the Riesz transforms on the Heisenberg group
which can be realised as operator valued Fourier multipliers, we
can prove the theorem stated above.\\

Coming back to Fourier multipliers on $ \H^n $ recall that the transforms $ T_m $ can be realised as

$$T_mf(z,t)=\int_{-\infty}^\infty e^{-i\lambda t} M(\lambda) f^\lambda(z)d\lambda$$
where $ W_\lambda(M(\lambda)f^\lambda) = m(\lambda)W_\lambda(f^\lambda).$
 It is therefore
natural to ask whether the R-boundedness of the families
$\{M(\lambda):\lambda\in\R^*\}$ and $\{\lambda
M'(\lambda):\lambda\in\R^*\}$ can be guaranteed by some conditions
on the multiplier $ m(\lambda) $ and its derivative $
m'(\lambda).$
When each $m(\lambda)$  is a Euclidean Fourier multiplier on $ L^p(\R^n) $ we have a simple result.\\

\begin{thm}
Let $\{ m(\lambda): \lambda\in \R\setminus \{0\}  \}$ be a family of
Euclidean Fourier multipliers on $L^p(\R^n), 1<p<\infty$ such that both the
families $\{m(\lambda): \lambda\in \R\setminus\{0\}\}$ and
$\{\lambda m'(\lambda): \lambda\in \R\setminus\{0\}\}$ are
R-bounded on $L^p(\R^n)$. Then the transform $T_m$ defined by
$\widehat{(T_m f)}(\lambda)=m(\lambda)\hat{f}(\lambda)$ on
$L^p\cap L^2(H^n)$ extends to $L^p(H^n)$ as a bounded linear
operator for $1<p<\infty$.
\end{thm}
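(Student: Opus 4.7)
The plan is to reduce the statement to a one-parameter, operator-valued Euclidean Fourier multiplier in the central variable $t$, and then apply Theorem 1.4 (Weis). The naive attempt --- taking $X = L^p(\C^n)$ and using the Weyl multiplier family $\{\widetilde{M}(\lambda) = T_{m(\lambda)}\}$ on $L^p(\C^n)$ --- fails, because $\frac{d}{d\lambda}\widetilde M(\lambda)$ picks up a term from differentiating the representation $\pi_\lambda$ which involves multiplication by the unbounded function $x$, so R-boundedness of $\{\lambda \widetilde M'(\lambda)\}$ on $L^p(\C^n)$ is not available. The hypothesis that each $m(\lambda)$ is a \emph{Euclidean} Fourier multiplier will allow us to sidestep this by conjugating $T_m$ with a measure-preserving shear of $\H^n$.

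I would first make the structure of $\widetilde M(\lambda)$ explicit. Denoting the Euclidean symbol of $m(\lambda)$ by $\mu_\lambda$, a direct calculation starting from $W_\lambda(\widetilde M(\lambda)F) = m(\lambda)W_\lambda(F)$ shows that $\widetilde M(\lambda)$ acts on $L^p(\C^n) = L^p(\R_x^n\times \R_y^n)$ as a partial Fourier multiplier in the $y$ variable with symbol $\mu_\lambda(\eta - \lambda x/2)$ (up to a reflection). Combining this with the representation $T_mf(x,y,t)=\int e^{-i\lambda t}\widetilde M(\lambda)f^\lambda(x,y)\,d\lambda$ from the introduction and taking the $(y,t)$-Fourier transform yields, up to constants,
\[
\widehat{T_mf}(x,\eta,\lambda) \;=\; c\,\mu_{-\lambda}(-\eta - \lambda x/2)\,\widehat f(x,\eta,\lambda).
\]
Thus, with $x$ regarded as a parameter, $T_m$ is a Fourier multiplier on $\R^{n+1}_{y,t}$ whose symbol depends on $x$ only through the frequency shift $\eta \mapsto \eta - \lambda x/2$.

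The key step is to absorb this shift by the shear $\Phi:(x,y,t)\mapsto (x,y,t-xy/2)$, a measure-preserving diffeomorphism of $\R^{2n+1}$. Setting $g = f\circ\Phi^{-1}$ and $h = T_mf\circ\Phi^{-1}$, a routine calculation gives $\widehat g(x,\eta,\lambda) = \widehat f(x,\eta - \lambda x/2,\lambda)$ and similarly for $h$; substituting these into the displayed identity cancels the $x$-dependence of the symbol and leaves
\[
\widehat h(x,\eta,\lambda) \;=\; c\,\mu_{-\lambda}(-\eta)\,\widehat g(x,\eta,\lambda).
\]
For each fixed $x$, this means $h(x,\cdot,\cdot)$ is obtained from $g(x,\cdot,\cdot)$ by a single operator-valued Fourier multiplier in $t$, namely the family $\lambda\mapsto \widetilde m(\lambda)$, where $\widetilde m(\lambda)$ is the Euclidean Fourier multiplier on $L^p(\R^n_y)$ with symbol $\mu_{-\lambda}(-\eta)$.

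To conclude, I would apply Theorem 1.4 with $X = L^p(\R^n_y)$, which is a UMD space for $1<p<\infty$. Since $\widetilde m(\lambda) = R\,m(-\lambda)\,R$ where $R\varphi(y) = \varphi(-y)$ is an $L^p$-isometry, the hypothesized R-boundedness of $\{m(\lambda)\}$ and $\{\lambda m'(\lambda)\}$ transfers immediately to $\{\widetilde m(\lambda)\}$ and $\{\lambda \widetilde m'(\lambda)\}$. Weis's theorem then gives $\|h(x,\cdot,\cdot)\|_{L^p(\R^{n+1})} \le C\|g(x,\cdot,\cdot)\|_{L^p(\R^{n+1})}$ with a constant independent of $x$; integrating in $x$ via Fubini and using that $\Phi$ is measure-preserving yields $\|T_mf\|_{L^p(\H^n)} \le C\|f\|_{L^p(\H^n)}$. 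The main technical hurdle is the explicit computation of $\widetilde M(\lambda)$ on $L^p(\C^n)$ starting from $m(\lambda)$; once that structure is in hand, the decoupling shear is easy to spot.
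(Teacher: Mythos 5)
Your proposal is correct and is essentially the paper's own argument: the authors prove (Lemma 4.1) that $T^\lambda_{m(\lambda)} = e_\lambda S_2(\lambda) e_{-\lambda}$ with $e_\lambda$ multiplication by $e^{(i/2)\lambda x\cdot y}$ and $S_2(\lambda)$ convolution in $y$ alone, observe that $e_{-\lambda}f^\lambda = (\tau(x\cdot y)f)^\lambda$ so that conjugating $T_m$ by the measure-preserving shear $t\mapsto t\pm \tfrac12 x\cdot y$ turns it into the operator-valued Fourier multiplier with family $S_2(\lambda)$, and then invoke Weis's theorem exactly as you do. Your frequency-side description of the shift $\eta\mapsto\eta-\lambda x/2$ and the fixed-$x$ Fubini application of Weis with $X=L^p(\R^n_y)$ are just a slightly more explicit rendering of the same reduction.
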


But the story of general multipliers is quite different. We need
to find sufficient conditions on $m(\lambda)$ and $\lambda
m'(\lambda)$ so that the operator families $\{M(\lambda):
\lambda\in \R^*\}$ and $\{\lambda M'(\lambda):\lambda\in \R^*\}$
are R-bounded. As we have to deal with multipliers for $ W_\lambda $  as well as for $ W = W_1 $ it is conveneinet to use the following notation. Given a bounded linear operator $ S $ on $ L^2(\R^n) $ we use the notation $ T_S^\lambda $ to stand for the operator defined by
$ W_\lambda(T_S^\lambda g) = S W_\lambda(g).$ In this notation, $M(\lambda) = T^\lambda _{m(\lambda)}$ and we will use both notations  depending on the context. It can be shown that $ T_S^\lambda $ is conjugate to $ T_{\tilde{S}}^1 $ for some $ \tilde{S} $ which is related to $ S.$ We also need a family of non-commutative derivations depending on the parameter $ \lambda $.\\

 We define
$\delta_j(\lambda) m(\lambda)=|\lambda|^{-1/2}[m(\lambda), A_j(\lambda)] $ where $ A_j(\lambda)=\frac{\partial}{\partial
\xi_j}+|\lambda|\xi_j $ and  $
\bar{\delta_j}(\lambda)m(\lambda)=|\lambda|^{-1/2}[A^*_j(\lambda),
m(\lambda)],$ where
$A_j^*(\lambda)=-\frac{\partial}{\partial
\xi_j}+|\lambda|\xi_j.$
Considering the scaled Hermite
operator $ H(\lambda) $ which can be written as
 $$H(\lambda)=\frac{1}{2}\sum_{j=1}^n(A_j(\lambda)A_j^*(\lambda)+A_j^*(\lambda)A_j(\lambda))$$
 we define the dyadic spectral projections
$\chi_k(\lambda)=\sum_{2^{k-1}\leq 2j+n<2^k}P_j(\lambda).$ For the sake of brevity, let us
say that an operator $m$ satisfies the condition ($M_l(\lambda)$),
$M$ for Mauceri, if  $ m$ is of class $ C^l $ and satisfies the
following estimates:  For all  $\alpha, \beta\in N^n,
|\alpha|+|\beta|\leq l$
 $$\sup_{k\in \N^n} 2^{k(|\alpha|+\beta|-n)}||(\delta(\lambda)^\alpha \bar{\delta(\lambda)}^\beta m)\chi_k(\lambda)||^2_{HS}\leq C~$$
 with a constant $ C $ independent of $ \lambda $. We simply write
$(M_l)$ in place  of $(M_l(1)).$\\

If $ \delta_r f(z) = f(rz) $ stands for the dilation, then it  can be shown that
$\delta^{-1}_{\sqrt{|\lambda|}}T^\lambda_{m(\lambda)}\delta_{\sqrt{|\lambda|}}=T^1_{\tilde{m}(\lambda)}$
where
$\tilde{m}(\lambda)=\delta^{-1}_{\sqrt{|\lambda|}}m(\lambda)\delta_{\sqrt{|\lambda|}}$.
Equivalently, \be
M(\lambda)=T^\lambda_{m(\lambda)}=\delta_{\sqrt{|\lambda|}}T^1_{\tilde{m}(\lambda)}\delta^{-1}_{\sqrt{|\lambda|}}\ee
Note that $T^1_{\tilde{m}(\lambda)}$ is a Weyl multiplier:
$$W(T^1_{\tilde{m}(\lambda)}f)=\tilde{m}(\lambda)W(f), \;\;\;\;f\in L^2(\C^n).$$
According to a theorem of Rubio de Francia \cite{R} the
R-boundedness of the family $\{ M(\lambda) = T^\lambda_{m(\lambda)}:\lambda\in \R^*\}$
follows if we can prove  the weighted norm inequality
 $$\int_{\C^n}|T^\lambda_{m(\lambda)}f(z)|^2w(z)dz\leq C _w\int_{\C^n}|f(z)|^2w(z)dz$$
 for all $w\in A_2(\C^n)$ uniformly in $\lambda$. In view of (1.3)  it
 is enough to prove this inequality for $T^1_{\tilde{m}(\lambda)}$, $\lambda\in
 \R^*$. This leads to the following result.\\

 \begin{thm}
 For every $ \lambda \in \R^* $ let $m(\lambda) \in B(L^2(\R^n))$ satisfy the condition $
 ( M_{2n+2}(\lambda)).$
 Then for every $ 1 < p < \infty,$ the Weyl multiplier $ M(\lambda) =T^\lambda_{m(\lambda)}$ is R-bounded on $L^p(\C^n).$
\end{thm}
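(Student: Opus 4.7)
The plan is to reduce the R-boundedness of the family $\{M(\lambda): \lambda \in \R^*\}$ on $L^p(\C^n)$ to a uniform weighted norm inequality for the rescaled Weyl multipliers $T^1_{\tilde m(\lambda)}$, to which Theorem 1.3 can be applied directly. The main engine is the scaling identity (1.3) together with Rubio de Francia's theorem as recalled in the text: once we know that a family $\{M(\lambda)\}$ satisfies $\int_{\C^n}|M(\lambda) f|^p w\,dz \le C\int_{\C^n}|f|^p w\,dz$ for every $w\in A_p(\C^n)$ with a constant $C$ depending only on $[w]_{A_p}$, the R-boundedness of the family on $L^p(\C^n)$ follows.

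The first step is the reduction to $\tilde m(\lambda)$. Using (1.3) and a change of variables on $\C^n$, for any $w\in A_p(\C^n)$ we obtain
\[
\int_{\C^n}|M(\lambda)f(z)|^p w(z)\,dz = |\lambda|^{-n}\int_{\C^n}|T^1_{\tilde m(\lambda)}g(u)|^p w_\lambda(u)\,du,
\]
where $g = \delta^{-1}_{\sqrt{|\lambda|}}f$ and $w_\lambda(u)=w(u/\sqrt{|\lambda|})$. Since the $A_p$ class on $\C^n$ is dilation-invariant with $[w_\lambda]_{A_p}=[w]_{A_p}$, a uniform weighted norm inequality for $T^1_{\tilde m(\lambda)}$ with constant depending only on $[w]_{A_p}$ will transfer back, after undoing the dilation on the right-hand side, to a uniform weighted norm inequality for $M(\lambda)$ with the same constant. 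This is the easy half.

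The heart of the proof is the second step: verifying that if $m(\lambda)$ satisfies $(M_{2n+2}(\lambda))$ uniformly in $\lambda$, then $\tilde m(\lambda) = \delta^{-1}_{\sqrt{|\lambda|}}m(\lambda)\delta_{\sqrt{|\lambda|}}$ satisfies the fixed condition $(M_{2n+2}) = (M_{2n+2}(1))$ uniformly in $\lambda$. A direct computation on $L^2(\R^n)$ shows that
\[
A_j(\lambda)\,\delta_{\sqrt{|\lambda|}} = \sqrt{|\lambda|}\,\delta_{\sqrt{|\lambda|}}\,A_j, \qquad A_j^*(\lambda)\,\delta_{\sqrt{|\lambda|}} = \sqrt{|\lambda|}\,\delta_{\sqrt{|\lambda|}}\,A_j^*,
\]
and hence $\delta^{-1}_{\sqrt{|\lambda|}}H(\lambda)\delta_{\sqrt{|\lambda|}} = |\lambda|H$, so that $\delta^{-1}_{\sqrt{|\lambda|}}P_j(\lambda)\delta_{\sqrt{|\lambda|}} = P_j$ and $\delta^{-1}_{\sqrt{|\lambda|}}\chi_k(\lambda)\delta_{\sqrt{|\lambda|}} = \chi_k$. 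The $|\lambda|^{-1/2}$ factor built into the definition of $\delta_j(\lambda)$ and $\bar\delta_j(\lambda)$ is precisely what is needed to cancel the extra $\sqrt{|\lambda|}$ coming from the commutators, giving
\[
\delta^{-1}_{\sqrt{|\lambda|}}\bigl(\delta(\lambda)^\alpha\bar\delta(\lambda)^\beta m(\lambda)\bigr)\delta_{\sqrt{|\lambda|}} = \delta^{\alpha}\bar\delta^{\beta}\tilde m(\lambda).
\]
Because conjugation by $\delta_{\sqrt{|\lambda|}}$ is, up to the scalar $|\lambda|^{-n/4}$ and its inverse, conjugation by a unitary on $L^2(\R^n)$, it preserves the Hilbert--Schmidt norm, so
\[
\bigl\|(\delta(\lambda)^\alpha\bar\delta(\lambda)^\beta m(\lambda))\chi_k(\lambda)\bigr\|_{HS} = \bigl\|(\delta^\alpha\bar\delta^\beta\tilde m(\lambda))\chi_k\bigr\|_{HS}.
\]
Thus condition $(M_{2n+2}(\lambda))$ on $m(\lambda)$ is equivalent to $(M_{2n+2})$ on $\tilde m(\lambda)$ with the same constant.

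With the reduction accomplished, the final step is immediate: Theorem 1.3 applied to $\tilde m(\lambda)$ yields a weighted $A_p$ inequality for $T^1_{\tilde m(\lambda)}$, uniform in $\lambda$, for every $1<p<\infty$. Transferring back via the first step gives the uniform $A_p$ bound for $M(\lambda) = T^\lambda_{m(\lambda)}$, and Rubio de Francia's theorem then yields the R-boundedness on $L^p(\C^n)$. The main obstacle is really just the careful bookkeeping in the second step to confirm that the $\lambda$-dependent scalings in $A_j(\lambda)$, $\chi_k(\lambda)$, and the normalisations of $\delta_j(\lambda), \bar\delta_j(\lambda)$ combine exactly so that the seminorm defining $(M_{2n+2}(\lambda))$ is invariant under the intertwining dilation; everything else is a direct consequence of the results already stated.
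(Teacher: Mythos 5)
Your proposal is correct and follows essentially the same route as the paper: reduce to the Weyl multiplier $T^1_{\tilde m(\lambda)}$ via the scaling identity (1.3), check that the intertwining dilation (a unitary up to a scalar) converts $(M_{2n+2}(\lambda))$ for $m(\lambda)$ into $(M_{2n+2})$ for $\tilde m(\lambda)$ with the same constant, invoke Theorem 1.3 for a $\lambda$-uniform $A_p$ weighted inequality, and conclude by Rubio de Francia's theorem. Your explicit bookkeeping of the commutation relations and the dilation invariance of the $A_p$ constant is exactly the content of Lemma 2.2 and the unitarity remark in the paper's argument.
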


This takes care of the R-boundednesss of the family $ \{ M(\lambda): \lambda \in \R^* \}.$ The R-boundedness of the family $\{\lambda
M'(\lambda):\lambda \in \R^*\}$ turns out to be even more complicated.
Since we have
$$M(\lambda)=\delta_{\sqrt{|\lambda|}}T^1_{\tilde{m}(\lambda)} \delta^{-1}_{\sqrt{|\lambda|}}$$
 the derivative of $M(\lambda)$ involves several terms. We will show
that
$$2\lambda \frac{d}{d\lambda}M(\lambda)=[B,M(\lambda)]+T^\lambda_{[x.\nabla, m(\lambda)]}+T^\lambda_{2\lambda \frac{d}{d\lambda}m(\lambda)}$$
where $\nabla=(\frac{\partial}{\partial
x_1},...\frac{\partial}{\partial x_n})$ and $B=\sum_{j=1}^n
(z_j\frac{\partial}{\partial
z_j}+\bar{z_j}\frac{\partial}{\partial \bar{z_j}}).$ The second
and third terms are easy to handle whereas the first term is not.
It turns out that $[B, M(\lambda)]$ is not even a Weyl multiplier
and hence not accessible to our methods.\\

The Riesz transforms $R_j$ on the Heisenberg group $H^n$ are
defined via the multipliers $A_j(\lambda)H(\lambda)^{-1/2}$ and it is well
known that they are bounded on $L^p(H^n)$. In this case it turns
out $\{[B,M(\lambda)]:\lambda\in \R^*\}$ is not R-bounded. As a
consequence of this we obtain Theorem 1.6. As we discussed
earlier, because of the behavior of $ [B,M(\lambda)]$ we cannot
get any sufficient condition for $L^p$-boundedness of the operator
$T_m$ in terms of the condition
(M). However we have the following result.\\

\begin{thm}
Let the two families of operators $\{m(\lambda): \lambda\in
\R^*\}$, $\{\lambda m'(\lambda): \lambda\in \R^*\}$ satisfy the
conditions ($M_{2n+3}(\lambda)$) and ($M_{2n+2}(\lambda)$)
respectively. Let  $\mathcal{L}$ stand for the sublaplacian on $H^n$.Then the multiplier transform $ T_m $ on $ \H^n
$ satisfies
$$||T_{m} f||_p\leq C~ ||\mathcal{L}^{1/2}f||_p,\; \;\; 1<p<\infty.$$

\end{thm}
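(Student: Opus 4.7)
The plan is to establish the equivalent statement that $T_m \mathcal{L}^{-1/2}$ extends to a bounded operator on $L^p(\H^n)$. Under the identification $L^p(\H^n) \cong L^p(\R; L^p(\C^n))$ used throughout the paper, this operator is the Euclidean operator-valued Fourier multiplier whose symbol is the family of Weyl multipliers
$$\tilde{M}(\lambda) = T^\lambda_{\tilde{m}(\lambda)}, \qquad \tilde{m}(\lambda) = m(\lambda)\,H(\lambda)^{-1/2}, \qquad \lambda \in \R^*.$$
I would invoke Weis's multiplier theorem (Theorem 1.4), which reduces the task to proving R-boundedness on $L^p(\C^n)$ of the two families $\{\tilde{M}(\lambda)\}$ and $\{\lambda \tilde{M}'(\lambda)\}$.

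For $\{\tilde{M}(\lambda)\}$, I would verify the Mauceri condition $(M_{2n+2}(\lambda))$ for $\tilde{m}(\lambda)$ and conclude via Theorem 1.8. Expanding each $\delta(\lambda)^\alpha \bar\delta(\lambda)^\beta \tilde{m}(\lambda)$ by Leibniz produces a sum of products $(\delta^{\alpha'}\bar\delta^{\beta'} m(\lambda))(\delta^{\alpha''}\bar\delta^{\beta''} H(\lambda)^{-1/2})$; the factors involving $H(\lambda)^{-1/2}$ are explicit from the functional calculus for the scaled Hermite operator and contribute the decay $2^{-k/2}$ against $\chi_k(\lambda)$. The extra regularity imposed on $m$ via $(M_{2n+3}(\lambda))$ is precisely what is needed to compensate for the derivative order consumed whenever $\delta$ or $\bar\delta$ lands on the $H^{-1/2}$ factor.

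For $\{\lambda \tilde{M}'(\lambda)\}$ I would use the commutator identity displayed just above the theorem,
$$2\lambda \tilde{M}'(\lambda) = [B, \tilde{M}(\lambda)] + T^\lambda_{[x\cdot\nabla,\,\tilde{m}(\lambda)]} + T^\lambda_{2\lambda\,\tilde{m}'(\lambda)}.$$
After a Leibniz expansion, the last two terms become Weyl multipliers falling under Theorem 1.8: pieces containing only $m$ or $[x\cdot\nabla, m]$ require the hypothesis $(M_{2n+3}(\lambda))$, while pieces containing $\lambda m'$ require only $(M_{2n+2}(\lambda))$, matching the asymmetric regularity assumed in the statement.

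The genuine obstacle is the commutator term $[B, \tilde{M}(\lambda)]$, since $\{[B, M(\lambda)]\}$ by itself fails to be R-bounded, which is exactly the mechanism behind Theorem 1.6. Writing $\tilde{M}(\lambda) = M(\lambda) P(\lambda)$ with $P(\lambda) := T^\lambda_{H(\lambda)^{-1/2}}$, I would split
$$[B, \tilde{M}(\lambda)] = [B, M(\lambda)]\, P(\lambda) + M(\lambda)\,[B, P(\lambda)].$$
The second summand is a Weyl multiplier whose symbol is built entirely from $H^{-1/2}$ and its $\delta, \bar\delta$-derivations (via the same commutator identity applied to $P$), so it is again covered by Theorem 1.8. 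The critical point for the first summand is that the factor $P(\lambda)$ supplies exactly one order of $\mathcal{L}^{-1/2}$-smoothing, which absorbs the extra order of $B$ hidden inside $[B, M(\lambda)]$. Making this absorption rigorous -- either by a direct kernel estimate in the spirit of the refinement yielding Theorem 1.2, or by using the decomposition $B = \sum_j (x_j X_j + y_j Y_j)$ in horizontal vector fields so that each $X_j P(\lambda)$, $Y_j P(\lambda)$ becomes a Riesz-type Weyl multiplier controlled by Theorem 1.8 -- is the most delicate step of the proof, and the place where the full strength of the hypothesis $(M_{2n+3}(\lambda))$ on $m$ is genuinely used.
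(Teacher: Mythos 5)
Your high-level strategy --- reduce the statement to the boundedness of $T_m\mathcal{L}^{-1/2}$, apply Weis's theorem, use the commutator decomposition of $2\lambda M'(\lambda)$, and let the $\mathcal{L}^{-1/2}$ factor tame the bad term $[B,M(\lambda)]$ --- is indeed the paper's strategy. But you realize $\mathcal{L}^{-1/2}$ on the wrong side, and that is exactly where the argument breaks. Since $\mathcal{L}$ is left-invariant, $\widehat{(\mathcal{L}f)}(\lambda)=\hat{f}(\lambda)H(\lambda)$, so $T_m\mathcal{L}^{-1/2}$ acts on each $\lambda$-slice as $M(\lambda)L_\lambda^{-1/2}$, where $L_\lambda^{-1/2}$ is the \emph{right} Weyl multiplier determined by $W_\lambda(L_\lambda^{-1/2}f)=W_\lambda(f)H(\lambda)^{-1/2}$. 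Your $\tilde M(\lambda)=T^\lambda_{m(\lambda)H(\lambda)^{-1/2}}$ is a \emph{left} multiplier in the paper's notation, and corresponds to $T_m(\mathcal{L}^R)^{-1/2}$, a different operator from the one in the statement.

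This is not a cosmetic slip. The entire difficulty sits in $[B,M(\lambda)]$, and the identity derived in Section 4,
$$W\bigl([B,T^1_{\tilde m(\lambda)}]f\bigr)=-2\sum_{j=1}^n\Bigl(\bar{\delta_j}\tilde m(\lambda)\,W(f)A_j^*+\delta_j\tilde m(\lambda)\,W(f)A_j\Bigr),$$
shows that commuting with $B$ produces the unbounded factors $A_j,A_j^*$ acting on the \emph{right} of $W(f)$. These can only be absorbed by a smoothing factor that also multiplies $W(f)$ on the right: $W(L_1^{-1/2}f)A_j^*=W(f)H^{-1/2}A_j^*$, and $H^{-1/2}A_j^*$ is the Weyl-transform side of the twisted Riesz transform $Z_j(1)L_1^{-1/2}$. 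This is precisely Proposition 4.3: $[B,T^\lambda_{m(\lambda)}]L_\lambda^{-1/2}$ factors as a composition of the Weyl multipliers with symbols $\bar{\delta_j}\tilde m(\lambda)$, $\delta_j\tilde m(\lambda)$ (which satisfy Mauceri's condition by hypothesis and hence weighted norm inequalities by Theorem 1.3) with the oscillatory singular integrals $Z_j(1)L_1^{-1/2}$, $\bar{Z_j}(1)L_1^{-1/2}$ (weighted-bounded by Lu--Zhang), and R-boundedness follows from Rubio de Francia. In your splitting $[B,\tilde M(\lambda)]=[B,M(\lambda)]P(\lambda)+M(\lambda)[B,P(\lambda)]$ with $P(\lambda)$ a left multiplier, the first term has Weyl side $-2\sum_j\bar{\delta_j}\tilde m(\lambda)H^{-1/2}W(f)A_j^*+\cdots$: the $H^{-1/2}$ lands on the left of $W(f)$ while the unbounded $A_j^*$ remains uncompensated on the right, so $[B,M(\lambda)]P(\lambda)$ is no better than $[B,M(\lambda)]$ itself --- for the Riesz-transform symbol of Theorem 1.6 it is not even bounded on $L^2(\C^n)$. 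The ``absorption'' you defer as ``the most delicate step'' therefore does not occur in your setup; the fix is to keep $H(\lambda)^{-1/2}$ on the right, i.e., to work with $M(\lambda)L_\lambda^{-1/2}$ and the displayed commutator identity, as the paper does.
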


We also have the following result. Let $T(n)\subset U(n)$ be the torus which acts on $\C^n$ by
$\rho(\sigma)f(z)=f(e^{i\theta_1} z_1, ...,e^{i\theta_n} z_n)$ if
$\sigma$ is the diagonal matrix with entries $e^{i\theta_1}, ...,
e^{i\theta_n}$. Then
$$Rf(z)=\int_{T(n)} \rho(\sigma)f(z) d\sigma$$
is polyradial and it can be easily checked that $||Rf||_p\leq ||f||_p$.\\

\begin{thm}
Let the two families of operators $\{m(\lambda): \lambda\in
\R^*\}$, and $\{\lambda m'(\lambda): \lambda\in \R^*\}$ satisfies
the condition ($M_{2n+3}(\lambda)$) and ($M_{2n+2}(\lambda)$)
respectively. Then for the multiplier transform $ T_m $ on $ \H^n
$ we have
$$||R\circ  T_m \circ Rf||_p\leq C~ ||f||_p,\;\;\;1<p<\infty.$$
\end{thm}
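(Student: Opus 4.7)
The plan is to apply Weis's theorem (Theorem 1.4) to the operator-valued Fourier multiplier $\tilde M(\lambda) := R M(\lambda) R$ acting on the UMD space $X = L^p(\C^n)$. Since $R$ acts only in the $\C^n$-variable, it commutes with the Fourier transform in the central variable $t$, and under the identification $L^p(\H^n) = L^p(\R, L^p(\C^n))$ one has
\[
R T_m R f(z,t) = \int_{-\infty}^\infty e^{-i\lambda t}\, \tilde M(\lambda) f^\lambda(z) \, d\lambda.
\]
By Theorem 1.4 it suffices to verify the R-boundedness of $\{\tilde M(\lambda)\}_{\lambda\in\R^*}$ and $\{\lambda \tilde M'(\lambda)\}_{\lambda\in\R^*}$ on $L^p(\C^n)$. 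R-boundedness of the first family is immediate from Theorem 1.8 (the hypothesis $(M_{2n+3}(\lambda))$ is stronger than $(M_{2n+2}(\lambda))$) together with the $L^p$-contractivity of $R$, since sandwiching an R-bounded family by fixed bounded operators preserves R-boundedness.

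For the derivative family I would invoke the identity already derived in the discussion preceding Theorem 1.9,
\[
2\lambda M'(\lambda) = [B, M(\lambda)] + T^\lambda_{[x\cdot\nabla,\, m(\lambda)]} + T^\lambda_{2\lambda m'(\lambda)},
\]
to split $2\lambda \tilde M'(\lambda)$ into three pieces sandwiched by $R$. The symbol pieces $R T^\lambda_{[x\cdot\nabla, m(\lambda)]} R$ and $R T^\lambda_{2\lambda m'(\lambda)} R$ are R-bounded by Theorem 1.8: the family $\lambda m'(\lambda)$ satisfies $(M_{2n+2}(\lambda))$ by hypothesis, while $[x\cdot\nabla, m(\lambda)]$---obtained by expanding $x\cdot\nabla$ as a linear combination of $A_j(\lambda)^2$ and $A_j^*(\lambda)^2$ modulo constants---reduces to a finite sum of non-commutative derivatives of $m(\lambda)$ with one extra creation/annihilation factor on one side. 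The extra factor costs one more derivative in the Mauceri condition, which is precisely what the $(M_{2n+3}(\lambda))$-hypothesis provides via the standard bounds $\|A_j(\lambda)\chi_k(\lambda)\|,\, \|A_j^*(\lambda)\chi_k(\lambda)\| \lesssim 2^{k/2}$.

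The main obstacle is the piece $R[B, M(\lambda)] R$. Since the Euler operator $B$ is invariant under the torus action on $\C^n$, it commutes with $R$, so this piece equals $[B, R M(\lambda) R]$. Although $[B, M(\lambda)]$ is not itself a Weyl multiplier---precisely the obstruction that forced the $\mathcal{L}^{1/2}$ factor in Theorem 1.9---the sandwiched operator $RM(\lambda)R$ \emph{is} a polyradial Weyl multiplier: via the Weyl transform, $R$ corresponds to the projection onto operators diagonal in the $\lambda$-Hermite basis $\{\Phi_\alpha^\lambda\}$, and $RM(\lambda)R$ acts on polyradial functions by multiplying the $\alpha$-th Laguerre coefficient by $\mu_\alpha(\lambda) := \langle m(\lambda) \Phi_\alpha^\lambda, \Phi_\alpha^\lambda\rangle$. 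Since $B$ acts in the Laguerre basis only through the nearest-neighbor shifts $\alpha \to \alpha \pm e_j$, the commutator $[B, R M(\lambda) R]$ is controlled by the finite differences $\mu_\alpha - \mu_{\alpha \pm e_j}$, and the identities
\[
2|\lambda|\alpha_j (\mu_\alpha - \mu_{\alpha - e_j}) = \langle [m(\lambda), A_j^*(\lambda)]\, A_j(\lambda) \Phi_\alpha^\lambda, \Phi_\alpha^\lambda\rangle
\]
(and the analogue for $\alpha + e_j$) express these differences through matrix elements of $\bar\delta_j(\lambda) m(\lambda)$ and $\delta_j(\lambda) m(\lambda)$ composed with an extra $A_j(\lambda)$ or $A_j^*(\lambda)$ factor. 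This realizes $R[B, M(\lambda)] R$ as a polyradial Weyl multiplier whose effective symbol, under the hypothesis $(M_{2n+3}(\lambda))$ on $m(\lambda)$, satisfies a $(M_{2n+2}(\lambda))$-type estimate; a final application of Theorem 1.8 (together with the contractivity of $R$) yields R-boundedness of $\{R[B, M(\lambda)] R\}_{\lambda\in\R^*}$ and completes the verification of Weis's hypotheses.
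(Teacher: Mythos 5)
Your overall skeleton coincides with the paper's: reduce to Weis's theorem for $\tilde M(\lambda)=RM(\lambda)R$ on $X=L^p(\C^n)$, split $2\lambda M'(\lambda)$ by Lemma 2.4 into $[B,M(\lambda)]$ plus two genuine Weyl-multiplier terms, and dispose of $T^\lambda_{[\xi\cdot\nabla,m(\lambda)]}$ and $T^\lambda_{2\lambda m'(\lambda)}$ exactly as in Proposition 4.2. The gap is in your treatment of $R[B,M(\lambda)]R$. You are right that $B$ commutes with $R$, that $RM(\lambda)R$ acts diagonally on Laguerre coefficients by $\mu_\alpha=\langle m(\lambda)\Phi_\alpha^\lambda,\Phi_\alpha^\lambda\rangle$, and that $[B,RM(\lambda)R]$ is governed by the weighted first differences $b_{\alpha,\alpha\pm e_j}(\mu_\alpha-\mu_{\alpha\pm e_j})$ with $b_{\alpha,\alpha\pm e_j}\sim\alpha_j$. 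But the resulting operator is a weighted \emph{nearest-neighbour shift} in the Laguerre basis, and no such shift is of the form $R\circ T^\lambda_{m'}\circ R$ for any operator symbol $m'$: a Weyl multiplier sandwiched between two copies of $R$ is automatically diagonal in that basis. Hence there is no ``final application of Theorem 1.8'' available; you would need a separate (weighted) multiplier theorem for shifted Laguerre expansions, which you neither state nor prove. Moreover, your difference identity bounds $\alpha_j(\mu_\alpha-\mu_{\alpha-e_j})$ pointwise only by $\|\bar{\delta}_j(\lambda)m(\lambda)\|\,(2|\lambda|\alpha_j)^{1/2}$, which grows in $\alpha_j$; Mauceri's condition controls these matrix elements only in a Hilbert--Schmidt-averaged sense over dyadic blocks, so even the $L^2$ boundedness of your ``effective symbol'' is not immediate from what you write.

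The paper closes exactly this gap by a different device. Starting from $W([z_j\frac{\partial}{\partial z_j}+\bar{z}_j\frac{\partial}{\partial \bar{z}_j},T^1_{\tilde m(\lambda)}]f)=-2\bar{\delta}_j\tilde m(\lambda)W(f)A_j^*-2\delta_j\tilde m(\lambda)W(f)A_j$, it uses the fact that for polyradial $f$ the operator $W(f)$ commutes with each $H_j$, so one may insert $H^{1/2}H^{-1/2}$ and move $H^{1/2}$ across $W(f)$: the dangerous right-hand factor becomes $H^{-1/2}A_j^*$, an oscillatory singular integral with weighted bounds by Lu--Zhang, while $\bar{\delta}_j\tilde m(\lambda)A_jA_j^*H^{-1/2}$ and its companions satisfy Mauceri's condition because the extra derivative supplied by $(M_{2n+3}(\lambda))$ compensates the additional factor of $A_j$ or $A_j^*$. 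This exhibits $R\circ[B,T^1_{\tilde m(\lambda)}]\circ R$ as a composition of operators each satisfying weighted norm inequalities for polyradial $A_p$ weights; the theorem of Duoandikoetxea et al.\ identifying polyradial $A_p(\C^n)$ weights with $A_p(\R^n_+,d\mu)$, together with Rubio de Francia's theorem, then yields R-boundedness on the range of $R$ --- a step you also omit, since you pass from boundedness to R-boundedness without specifying the weight class relevant to the polyradial subspace. Your Laguerre-difference picture is a reasonable heuristic for why the sandwiched commutator should be tractable, but as written it does not prove the key step.
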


We conclude the introduction with the following remarks. It is possible to improve slightly the results of Theorems 1.2 and 1.3. In a recent article \cite{BT} we have studied the $ L^p $  boundedness  of Hermite pseudo-multipliers. In that connection we have  introduced modified Mauceri conditions. Following the ideas  and techniques used in that paper we can prove theorem 1.2  for multipliers of class $ C^{n+1}.$ Also in Theorem 1.3 we can reduce the number of derivatives from $ 2n+2 $ to $2n+1.$ The plan of the paper is as follows. In the next section we set up
notation, recall results from the theory of weyl transforms and
prove some preliminary lemmas needed later. In Section 3
we take up the problem of estimating certain kernels associated to Weyl multipliers. In Section 4  we prove our main results.\\

\section[Preliminaries]
{Preliminaries} We consider the Heisenberg group $H^n=\C^n\times
\R$ equipped with the group law
$$(z,t)(w,s)=(z+w, t+s+\frac{1}{2}\Im(z.\bar{w})).$$
This is a step two nilpotent Lie group and the Haar measure on
$H^n$ is simply the Lebesgue measure $dzdt$ on $\C^n\times \R$. In
order to define the Fourier transform on $H^n$ we need to recall
certain families of irreducible unitary representations of $H^n$.\\

For each $\lambda\in \R^*= \R\setminus \{0\}$ and $(z, t)\in H^n$
consider the operator $\pi_\lambda(z,t)$ defined on $L^2(\R^n)$ by
$$\pi_\lambda(z,t)\varphi(\xi)=e^{i\lambda t} e^{i\lambda(x.\xi+\frac{1}{2}x.y)}\varphi(\xi+y)$$
where $\varphi\in L^2(\R^n)$. It can be shown that each
$\pi_\lambda$ is an irreducible unitary representation of $H^n$.
Moreover, by a theorem of Stone-von Neumann any irreducible
unitary representation of $H^n$ which is non-trivial at the center
of $H^n$ is unitarily equivalent to $\pi_\lambda$ for a unique
$\lambda\in \R^*$. Apart from $\pi_\lambda$, there is another
family of one dimensional irreducible unitary representations. As
they do not play any role in the Plancherel theorem we do not
consider them. See Folland \cite{F} and Thangavelu \cite{T1} for
more on these
representations.\\

Given $f\in L^1(H^n)$ we can define the operator
$\hat{f}(\lambda)=\pi_\lambda(f)$ by
$$\hat{f}(\lambda)=\int_{\C^n} f(z,t)\pi_\lambda(z,t)dzdt.$$
The operator valued function $\lambda\rightarrow \hat{f}(\lambda)$
is called the (group) Fourier transform of $f$ on $H^n$. If we let
$f^\lambda$ stand for the inverse Fourier transform of $f$ in the
t-variable, i.e.
$$f^\lambda (z)=\int_{-\infty}^\infty e^{i\lambda t} f(z, t) dz dt$$
then we have
$$\hat{f}(\lambda)=\int_{\C^n}f^\lambda(z) \pi_\lambda (z,0)dz.$$
When $f\in L^1\cap L^2(H^n)$, it can be shown that
$\hat{f}(\lambda)$ is a Hilbert-Schmidt operator and
$$||\hat{f}(\lambda)||^2_{HS}=(2\pi)^n|\lambda|^{-n}\int_{\C^n}|f^\lambda (z)|^2dz.$$
In view of this Plancherel theorem for the Fourier transform takes
the form
$$||f||^2_2= \int_{-\infty}^{\infty}||\hat{f}(\lambda)||^2_{HS} d\mu(z)$$
where $d\mu(\lambda)=(2\pi)^{-n-1} |\lambda|^nd\lambda$ is the
Plancherel measure. We also have the inversion formula
$$f(z,t)=\int_{-\infty}^{\infty}tr(\pi_\lambda(z,t)^* \hat{f}(\lambda)) d\mu(\lambda)$$
for suitable functions.\\

Given a family of bounded linear operators $m(\lambda), \lambda\in
\R^*$ we can define $T_m$ by $ \widehat{(T_mf)}(\lambda) =
m(\lambda)\hat{f}(\lambda)$ which can also be written in the form
$$ T_mf(z,t) = (2\pi)^{-1} \int_{-\infty}^\infty e^{-i\lambda t} T^\lambda_{m(\lambda)}f^\lambda(z)  d\lambda.$$
The study of $ T^\lambda_{m(\lambda)} $ can be reduced to the study of Weyl multipliers using the following lemma.\\
\begin{lem}
For each $\lambda>0$ we have $\delta^{-1}_{\sqrt{\lambda}}
T^\lambda_{m(\lambda)}\delta_{\sqrt{\lambda}}=T^1_{\tilde{m}(\lambda)}$
where $\tilde{m}(\lambda)=\delta^{-1}_{\sqrt{\lambda}}
m(\lambda)\delta_{\sqrt{\lambda}}$.
\end{lem}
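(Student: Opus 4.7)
The plan is to reduce the identity to a scaling/intertwining relation between $W_\lambda$ and $W=W_1$, and then push the defining equation of $T^\lambda_{m(\lambda)}$ through that intertwining. To do this, I first compute, directly from
$$W_\lambda(f)\varphi(\xi)=\int_{\C^n}f(z)\,e^{i\lambda(x\cdot\xi+\frac{1}{2}x\cdot y)}\varphi(\xi+y)\,dz,$$
how $W_\lambda$ relates to $W$ after rescaling. Changing variables $z\mapsto z/\sqrt{\lambda}$ (which produces a Jacobian $\lambda^{-n}$ on $\C^n\cong\R^{2n}$) turns the phase into $e^{i(\sqrt{\lambda}x\cdot\xi+\frac{1}{2}x\cdot y)}$ and replaces the argument of $\varphi$ by $\xi+y/\sqrt{\lambda}$. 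A second rescaling $\xi\mapsto\xi/\sqrt{\lambda}$ in $L^2(\R^n)$ absorbs the remaining $\sqrt\lambda$ in the phase and converts $\varphi(\xi+y/\sqrt\lambda)$ into $(\delta_{\sqrt\lambda}^{-1}\varphi)(\xi+y)$. The end result is the intertwining identity
$$\delta_{\sqrt\lambda}^{-1}\,W_\lambda(f)\,\delta_{\sqrt\lambda}\;=\;\lambda^{-n}\,W(\delta_{\sqrt\lambda}^{-1}f),$$
where, on the left, $\delta_{\sqrt\lambda}$ denotes the dilation acting on $L^2(\R^n)$ and, on the right, on the function $f\in L^2(\C^n)$. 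This is the main computational step and the only place where the bookkeeping requires care: the two $\delta_{\sqrt\lambda}$'s live on different spaces, and the normalization constant $\lambda^{-n}$ must be tracked through both substitutions.

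With that identity in hand, the rest is routine. Starting from the defining equation $W_\lambda(T^\lambda_{m(\lambda)}f)=m(\lambda)W_\lambda(f)$, I conjugate both sides by $\delta_{\sqrt\lambda}$. The left hand side becomes
$$\delta_{\sqrt\lambda}^{-1}W_\lambda(T^\lambda_{m(\lambda)}f)\delta_{\sqrt\lambda}=\lambda^{-n}W\bigl(\delta_{\sqrt\lambda}^{-1}T^\lambda_{m(\lambda)}f\bigr),$$
while the right hand side, inserting $\delta_{\sqrt\lambda}\delta_{\sqrt\lambda}^{-1}=I$ between $m(\lambda)$ and $W_\lambda(f)$, becomes
$$\bigl(\delta_{\sqrt\lambda}^{-1}m(\lambda)\delta_{\sqrt\lambda}\bigr)\bigl(\delta_{\sqrt\lambda}^{-1}W_\lambda(f)\delta_{\sqrt\lambda}\bigr)=\tilde m(\lambda)\,\lambda^{-n}W(\delta_{\sqrt\lambda}^{-1}f).$$
Cancelling $\lambda^{-n}$ and invoking the defining equation $W(T^1_{\tilde m(\lambda)}g)=\tilde m(\lambda)W(g)$ with $g=\delta_{\sqrt\lambda}^{-1}f$, I obtain
$$W\bigl(\delta_{\sqrt\lambda}^{-1}T^\lambda_{m(\lambda)}f\bigr)=W\bigl(T^1_{\tilde m(\lambda)}\delta_{\sqrt\lambda}^{-1}f\bigr).$$
Since $W$ is injective on $L^2(\C^n)$, I conclude that $\delta_{\sqrt\lambda}^{-1}T^\lambda_{m(\lambda)}=T^1_{\tilde m(\lambda)}\delta_{\sqrt\lambda}^{-1}$, which rearranges to the desired identity.

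The main obstacle, such as it is, is the first step: keeping the two independent dilations (one on $\R^n$, one on $\C^n$) straight and confirming the factor $\lambda^{-n}$. Once the intertwining relation is correctly established, the conclusion follows purely formally from the algebraic definitions of the Weyl multipliers and the injectivity of $W$.
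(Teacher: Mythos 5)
Your proof is correct and follows essentially the same route as the paper: your intertwining identity $\delta_{\sqrt\lambda}^{-1}W_\lambda(f)\delta_{\sqrt\lambda}=\lambda^{-n}W(\delta_{\sqrt\lambda}^{-1}f)$ is exactly the paper's relation $W_\lambda(\delta_{\sqrt\lambda}f)=\lambda^{-n}\delta_{\sqrt\lambda}W(f)\delta_{\sqrt\lambda}^{-1}$ with $f$ replaced by $\delta_{\sqrt\lambda}^{-1}f$ (the paper obtains it from $\pi_\lambda(z,t)=e^{i\lambda t}\delta_{\sqrt\lambda}\pi_1(\sqrt\lambda z,0)\delta_{\sqrt\lambda}^{-1}$ rather than by a direct change of variables, but that is the same computation). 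The remaining algebraic manipulation and appeal to injectivity of $W$ on $L^2(\C^n)$ match the paper's argument.
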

\begin{proof}For $\lambda>0$ an easy calculation shows that
$$\pi_\lambda(z,t)=e^{i\lambda t} \delta_{\sqrt{\lambda}}\pi_1 (\sqrt{\lambda}z,0) \delta^{-1}_{\sqrt{\lambda}}.$$
Since $W_\lambda(f)$ is defined in terms of $\pi_\lambda(z,0)$ it
follows that
$$W_\lambda(\delta_{\sqrt{\lambda}}f)=\lambda^{-n} \delta_{\sqrt{\lambda}} W(f) \delta^{-1}_{\sqrt{\lambda}}$$
for any $f\in L^2(\C^n)$. Applying this identity to
$\delta^{-1}_{\sqrt{\lambda}}T^\lambda_{m(\lambda)}\delta_{\sqrt{\lambda}}f$
we see that
$$m(\lambda)W_\lambda(\delta_{\sqrt{\lambda}}f)=
\lambda^{-n}\delta_{\sqrt{\lambda}}W(\delta^{-1}_{\sqrt{\lambda}}T^\lambda_{m(\lambda)}\delta_{\sqrt{\lambda}}f)\delta^{-1}_{\sqrt{\lambda}}.$$
This immediately gives
$$W(\delta^{-1}_{\sqrt{\lambda}}T^\lambda_{m(\lambda)}\delta_{\sqrt{\lambda}}f)=\delta^{-1}m(\lambda)\delta_{\sqrt{\lambda}}W(f)$$
as desired.
\end{proof}

From the above lemma it is clear that, though
$T^\lambda_{m(\lambda)}$ is not a Weyl multiplier,
$\delta^{-1}_{\sqrt{\lambda}}T^\lambda_{m(\lambda)}\delta_{\sqrt{\lambda}}$
is. The noncommutative derivations $\delta_j$ and $\bar{\delta}_j$
acting on $\tilde{m}(\lambda)$ can be converted into certain
derivations acting on $m(\lambda)$ itself.\\

Recall that for an operator $m$ on $L^2(\R^n)$ we have defined
$\delta_j m=[m, A_j]$ and $\bar{\delta_j}m=[A_j^*, m]$ where
$A_j=\frac{\partial}{\partial \xi_j}+\xi_j$ and
$A_j^*=-\frac{\partial}{\partial \xi_j}+\xi_j. $ Under the
dilation $\delta_{\sqrt{\lambda}}$ we have
$$\delta_{\sqrt{\lambda}}A_j \delta^{-1}_{\sqrt{\lambda}}=\lambda^{-1/2}(\frac{\partial}{\partial \xi_j}+\lambda \xi_j)=
\lambda^{-1/2}A_j(\lambda)$$ and
$$\delta_{\sqrt{\lambda}}A_j^* \delta^{-1}_{\sqrt{\lambda}}=\lambda^{-1/2}(-\frac{\partial}{\partial \xi_j}+\lambda \xi_j)=
\lambda^{-1/2}A_j^*(\lambda).$$ In view of these relations,
$\delta_j
\tilde{m}(\lambda)=\lambda^{-1/2}\delta^{-1}_{\sqrt{\lambda}}[m(\lambda),
A_j(\lambda)]\delta_{\sqrt{\lambda}} $ and
$\bar{\delta_j}\tilde{m}(\lambda)=\lambda^{-1/2}\delta^{-1}_{\sqrt{\lambda}}[A_j^*(\lambda),m(\lambda)]\delta_{\sqrt{\lambda}}.$\\

\begin{lem}
For $\lambda> 0$ we have $\delta_j
\tilde{m}(\lambda)=\lambda^{-1/2}\delta^{-1}_{\sqrt{\lambda}}\delta_j(\lambda)m(\lambda)\delta_{\sqrt{\lambda}}$
and
$\bar{\delta_j}\tilde{m}(\lambda)=\lambda^{-1/2}\delta^{-1}_{\sqrt{\lambda}}\bar{\delta}_j(\lambda)m(\lambda)\delta_{\sqrt{\lambda}}$
where $\delta_j(\lambda) m(\lambda)=\lambda^{-1/2}[m(\lambda),
A_j(\lambda)]$ and $\bar{\delta_j}(\lambda)m(\lambda)=
\lambda^{-1/2}[A^*_j(\lambda),m(\lambda)]$.

\end{lem}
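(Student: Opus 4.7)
The lemma is a direct computation from the definitions, and essentially just repackages the formula displayed in the paragraph immediately preceding it. My plan is to write out the commutator defining $\delta_j \tilde{m}(\lambda)$, push all the dilation operators to the outside using the conjugation relations stated just above the lemma, and then recognize the resulting inner commutator as a multiple of $\delta_j(\lambda) m(\lambda)$.

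More concretely, I would begin with the definition $\delta_j \tilde m(\lambda) = [\tilde m(\lambda), A_j]$ and substitute $\tilde m(\lambda) = \delta_{\sqrt{\lambda}}^{-1} m(\lambda) \delta_{\sqrt{\lambda}}$. The key move is to use the identity $\delta_{\sqrt{\lambda}} A_j \delta_{\sqrt{\lambda}}^{-1} = \lambda^{-1/2} A_j(\lambda)$, equivalently
\[
\delta_{\sqrt{\lambda}} A_j = \lambda^{-1/2} A_j(\lambda)\, \delta_{\sqrt{\lambda}}, \qquad A_j \delta_{\sqrt{\lambda}}^{-1} = \lambda^{-1/2} \delta_{\sqrt{\lambda}}^{-1} A_j(\lambda),
\]
so that both $\tilde m(\lambda) A_j$ and $A_j \tilde m(\lambda)$ are conjugated into expressions of the form $\lambda^{-1/2} \delta_{\sqrt{\lambda}}^{-1} (\cdot) \delta_{\sqrt{\lambda}}$. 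Subtracting yields
\[
[\tilde m(\lambda), A_j] = \lambda^{-1/2}\, \delta_{\sqrt{\lambda}}^{-1} [m(\lambda), A_j(\lambda)]\, \delta_{\sqrt{\lambda}},
\]
and invoking the definition $\delta_j(\lambda) m(\lambda) = \lambda^{-1/2} [m(\lambda), A_j(\lambda)]$ rewrites this as the right‑hand side of the lemma. For the second identity I would simply repeat the same argument with $A_j$ replaced by $A_j^*$, using the analogous conjugation relation $\delta_{\sqrt{\lambda}} A_j^* \delta_{\sqrt{\lambda}}^{-1} = \lambda^{-1/2} A_j^*(\lambda)$, and noting that the commutator now appears in the opposite order to match the definition of $\bar\delta_j(\lambda)$.

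There is no real obstacle here: the result is a matter of carefully tracking the $\lambda^{-1/2}$ prefactors and ensuring the dilation operators are moved past $A_j$ and $A_j^*$ in the correct direction. The only thing to be slightly careful about is bookkeeping—the $\lambda^{-1/2}$ that appears from the conjugation of $A_j$ must be distinguished from the $\lambda^{-1/2}$ built into the definition of $\delta_j(\lambda)$—but both cancellations are straightforward once the commutator is written out.
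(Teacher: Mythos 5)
Your approach is exactly the paper's: the authors give no separate proof of this lemma, but derive it in the paragraph immediately preceding it by the very conjugation computation you describe, namely writing $A_j=\lambda^{-1/2}\delta^{-1}_{\sqrt{\lambda}}A_j(\lambda)\delta_{\sqrt{\lambda}}$ and pulling the dilations out of the commutator to obtain
\begin{equation*}
\delta_j\tilde{m}(\lambda)=\lambda^{-1/2}\,\delta^{-1}_{\sqrt{\lambda}}\,[m(\lambda),A_j(\lambda)]\,\delta_{\sqrt{\lambda}},
\end{equation*}
which is precisely your intermediate formula (and likewise for $\bar{\delta}_j$ with $A_j^*$). Up to this point your argument is correct and identical to the paper's.

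Your final step, however, does not say what you claim it says, and this is exactly the bookkeeping you warn about. Substituting $\delta_j(\lambda)m(\lambda)=\lambda^{-1/2}[m(\lambda),A_j(\lambda)]$, i.e.\ $[m(\lambda),A_j(\lambda)]=\lambda^{1/2}\,\delta_j(\lambda)m(\lambda)$, into the display above yields
\begin{equation*}
\delta_j\tilde{m}(\lambda)=\delta^{-1}_{\sqrt{\lambda}}\bigl(\delta_j(\lambda)m(\lambda)\bigr)\delta_{\sqrt{\lambda}},
\end{equation*}
with \emph{no} remaining prefactor, whereas the lemma as printed asserts an extra factor of $\lambda^{-1/2}$ on the right-hand side. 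The two statements differ by $\lambda^{-1/2}$ and cannot both hold. Since your intermediate identity agrees with the paper's own displayed formula just before the lemma, the resolution is that the lemma's statement is inconsistent with the definition of $\delta_j(\lambda)$ it itself records: the $\lambda^{-1/2}$ in front is a typo, and the prefactor-free conjugation identity is the one that is actually used later (it is what lets the authors transfer Mauceri's condition $(M_l(\lambda))$ on $m(\lambda)$ to the condition $(M_l)$ on $\tilde{m}(\lambda)$ with a constant independent of $\lambda$, using that $\lambda^{n/2}\delta_{\sqrt{\lambda}}$ is unitary). You should flag this discrepancy rather than assert that the factors match; as written, your claim that the substitution "rewrites this as the right-hand side of the lemma" is false for the right-hand side as printed.
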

On the Heisenberg group we have the left invariant vector fields
$$ T = \frac{\partial}{\partial t}, X_j=\frac{\partial}{\partial x_j} +\frac{1}{2} y_j \frac{\partial}{\partial
t},\; Y_j=\frac{\partial}{\partial y_j} +\frac{1}{2} x_j
\frac{\partial}{\partial t}$$ which give rise to a family of
operators $Z_j(\lambda)$ and $\bar{Z_j}(\lambda)$ as follows:
$\frac{1}{2} (X_j-iY_j)(e^{i\lambda t} f(z))= e^{i\lambda t}
Z_j(\lambda)f(z)$ and $\frac{1}{2} (X_j+iY_j)(e^{i\lambda t}
f(z))= e^{i\lambda t} \bar{Z_j}(\lambda)f(z).$ Explicitly, we have
$Z_j(\lambda)=\frac{\partial}{\partial z_j} -\frac{\lambda}{4}
\bar{z_j}$ and $\bar{Z}_j(\lambda)=\frac{\partial}{\partial
\bar{z_j}} +\frac{\lambda}{4} z_j.$ We also have their right
invariant counterparts $X_j^R=\frac{\partial}{\partial x_j}
+\frac{1}{2} y_j \frac{\partial}{\partial t}$ and $
Y^R_j=\frac{\partial}{\partial y_j} +\frac{1}{2} x_j
\frac{\partial}{\partial t}$. These gives us the vector fields
$Z^R_j(\lambda)=\frac{\partial}{\partial z_j} +\frac{\lambda}{4}
\bar{z_j}$ and $\bar{Z}^R_j(\lambda)=\frac{\partial}{\partial
\bar{z_j}} -\frac{\lambda}{4} z_j.$
 We record the following properties for later use.\\

 \begin{lem}
 For any $\lambda>0$, $f\in L^2(\C^n)$ we have:
\begin{enumerate}
\item $W_\lambda(Z_j(\lambda)f)=i W_\lambda(f)A_j^*(\lambda)$, $
W_\lambda(\bar{Z_j}(\lambda)f)=iW_\lambda(f)A_j(\lambda),$ \item
$W_\lambda(Z^R_j(\lambda)f)=i A_j^*(\lambda)W_\lambda(f)$,
$W_\lambda(\bar{Z^R_j}(\lambda)f)=i A_j(\lambda)W_\lambda(f),$
\item $\lambda W_\lambda(z_jf)=2i [W_\lambda (f),
A_j^*(\lambda)]$, $\lambda W_\lambda(\bar{z}_jf)=2i
[A_j(\lambda),W_\lambda (f)].$
\end{enumerate}
 \end{lem}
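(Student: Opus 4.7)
The plan is to verify all three pairs of identities by direct computation from the explicit formula
$\pi_\lambda(z,0)\varphi(\xi) = e^{i\lambda(x\cdot\xi + \frac12 x\cdot y)}\varphi(\xi+y)$
combined with the definition $W_\lambda(f) = \int_{\C^n} f(z)\pi_\lambda(z,0)\,dz$.

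I would prove (3) first, since it is the cornerstone that makes (1) and (2) clean. Applying $A_j^*(\lambda) = -\partial_{\xi_j} + \lambda\xi_j$ to $\pi_\lambda(z,0)\varphi$ and vice versa, the $\partial_{\xi_j}\varphi$ and $\lambda\xi_j\varphi$ contributions cancel between the two compositions, leaving only the multiplication terms coming from (i) differentiating the exponential phase in $\xi_j$ and (ii) the shift $\xi_j\mapsto \xi_j+y_j$ inside the argument of $\varphi$. Collecting these shows that $[\pi_\lambda(z,0),A_j^*(\lambda)]$ is a scalar multiple of $\pi_\lambda(z,0)$ proportional to $\bar z_j$; integrating against $f(z)\,dz$ then gives the commutator identity for $\lambda W_\lambda(\bar z_j f)$. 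The companion formula for $z_j$ and $A_j(\lambda)$ follows from the parallel computation with the sign of the differentiation reversed, producing a scalar proportional to $z_j$ instead of $\bar z_j$.

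For (1) I would begin with $W_\lambda(Z_j(\lambda)f) = W_\lambda(\partial_{z_j} f) - (\lambda/4)W_\lambda(\bar z_j f)$. An integration by parts in the Wirtinger derivative $\partial_{z_j}$ transfers the derivative onto $\pi_\lambda(z,0)$, and computing $\partial_{z_j}\pi_\lambda(z,0)$ via $\partial_{z_j} = \tfrac12(\partial_{x_j} - i\partial_{y_j})$ produces two kinds of terms: ones that reassemble (up to the constant $i$) into $\pi_\lambda(z,0)A_j^*(\lambda)$, coming from the $\xi_j$ and $\partial_{\xi_j}$ pieces of the phase and the argument shift, plus a residual multiplication by $\bar z_j\,\pi_\lambda(z,0)$. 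That residual piece is absorbed exactly by the $-(\lambda/4)W_\lambda(\bar z_j f)$ summand via (3), leaving only $iW_\lambda(f)A_j^*(\lambda)$. The identity for $\bar Z_j(\lambda)$ is the conjugate calculation, producing $iW_\lambda(f)A_j(\lambda)$.

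Finally, (2) follows from exactly the same pattern applied to the right-invariant fields. The only difference from the left-invariant case is the opposite sign of the $\partial_t$ terms in $X_j^R, Y_j^R$, which yields $Z_j^R(\lambda)=\partial_{z_j}+(\lambda/4)\bar z_j$ and $\bar Z_j^R(\lambda) = \partial_{\bar z_j}-(\lambda/4)z_j$. This sign flip is precisely what is needed so that after integration by parts and absorption via (3), the operators $A_j^*(\lambda)$ and $A_j(\lambda)$ emerge on the \emph{left} of $W_\lambda(f)$ rather than on the right. The main obstacle throughout is pure bookkeeping: tracking the signs coming from the Wirtinger decomposition, from the asymmetric phase $\frac12 x\cdot y$ in $\pi_\lambda$, and from the normalization constants $\lambda/4$, and checking that the unwanted $\bar z_j$ and $z_j$ multiplications cancel exactly so that what remains is the stated one-sided product of $W_\lambda(f)$ with $A_j(\lambda)$ or $A_j^*(\lambda)$.
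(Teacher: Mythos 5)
Your proposal is correct in method: the paper simply records this lemma without proof, and the direct verification you describe --- computing $[\pi_\lambda(z,0),A_j^\ast(\lambda)]$ and $[A_j(\lambda),\pi_\lambda(z,0)]$ to see they are coordinate multiples of $\pi_\lambda(z,0)$, then integrating by parts in the Wirtinger derivatives and absorbing the residual $z_j$, $\bar z_j$ terms --- is the standard derivation and does succeed. One caution on the ``bookkeeping'' you rightly flag as the main obstacle: the conventions as printed in the paper are not internally consistent (for instance $X_j^R, Y_j^R$ are printed with the same formulas as $X_j, Y_j$, and the displayed $Z_j(\lambda)=\partial_{z_j}-\tfrac{\lambda}{4}\bar z_j$ does not agree with the defining relation $\tfrac12(X_j-iY_j)(e^{i\lambda t}f)=e^{i\lambda t}Z_j(\lambda)f$), so when you carry out the computation you will need to fix a coherent set of signs first; with the correct conventions the cancellations occur exactly as you predict.
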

 The role of Laplacian $\Delta$ for $H^n$ is played by the
 sublaplacian $\mathcal{L}$ defined by
 $$\mathcal{L}=-\sum^n_{j=1}(X_j^2+Y_j^2)=-\frac{1}{2} \sum^n_{j=1} (X_j-Y_j)(X_j+Y_j)+(X_j+Y_j)(X_j-Y_j).$$
 The operator $L_\lambda$ defined by the condition $\mathcal{L}(e^{i\lambda
 t} f(z)=e^{i\lambda t} L_\lambda(f(z))$ is called the special
 Hermite operator which can be written as
 $$L_\lambda=-2 \sum_{j=1}^n Z_j(\lambda) \bar{Z_j}(\lambda)+\bar{Z}_j(\lambda)Z_j(\lambda).$$
 In view of Lemma 2.3 we have
 $$W_\lambda(L_\lambda f)=W_\lambda(f)\sum_{j=1}^n(A_j^*(\lambda)A_j(\lambda)+A_j(\lambda)A_j^*(\lambda))=W_\lambda(f) H(\lambda)$$
 for functions on $\C^n$. This leads to the equation
 $\widehat{(\mathcal{L}f)}(\lambda)=\hat{f}(\lambda)H(\lambda)$ for functions
 on $H^n$. If $\mathcal{L}^R$ stand for the right invariant
 sublaplacian then we have $\widehat{(\mathcal{L}^R f)}(\lambda)=H(\lambda)\hat{f}(\lambda). $\\

 In order to study the R-boundedness of the family $\lambda
 M'(\lambda)=\lambda \frac{d}{d\lambda}T^\lambda_{m(\lambda)}$ we
 need to get a usable expression for the derivative. We introduce
 the following operators. Let $\nabla$ stand for the gradient on
 $\R^n$,
 $\nabla=(\frac{\partial}{\partial\xi_1},\frac{\partial}{\partial\xi_2},...\frac{\partial}{\partial\xi_n})$
 and let $\xi \cdot\nabla=\sum_{j=1}^n \xi_j \frac{\partial}{\partial\xi_j}$. On $\C^n$ we let $$B=\sum^n_{j=1}(z_j \frac{\partial}{\partial
 z_j}+\bar{z_j}
 \frac{\partial}{\partial\bar{z_j}})=\sum_{j=1}^N(x_j\frac{\partial}{\partial
 x_j}+y_j \frac{\partial}{\partial y_j}).$$
 The dilation operator $\delta_{\sqrt{\lambda}}, \lambda>0$ on
 $\R^n$ can be expressed as
 $$\delta_{\sqrt{\lambda}}\varphi(\xi)=e^{\frac{1}{2} \xi \cdot \nabla (\log \lambda)} \varphi(\xi)$$
 and the same on $\C^n$ can be written as
 $$\delta_{\sqrt{\lambda}} f(z)=e^{\frac{1}{2}B(\log \lambda)}f(z).$$
 Using these expressions we can easily prove the following.\\
 \begin{lem}
 For $\lambda>0$ we have
 $$2 \lambda \frac{d}{d\lambda} T^\lambda_{m(\lambda)}=
 [B, T^\lambda_{m(\lambda)}]+T^\lambda_{[m(\lambda), \xi \cdot \nabla]}+T^\lambda_{2\lambda \frac{d}{d\lambda}m(\lambda)}.$$
 \end{lem}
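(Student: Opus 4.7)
The plan is to differentiate the conjugation identity from Lemma 2.1, namely
$$T^\lambda_{m(\lambda)} = \delta_{\sqrt{\lambda}}\, T^1_{\tilde m(\lambda)}\, \delta^{-1}_{\sqrt{\lambda}}, \qquad \tilde m(\lambda)=\delta^{-1}_{\sqrt{\lambda}} m(\lambda)\delta_{\sqrt{\lambda}},$$
applying the Leibniz rule to the three $\lambda$-dependent factors. Using the exponential presentations $\delta_{\sqrt{\lambda}}=e^{\frac12 B(\log\lambda)}$ on $\C^n$ and $\delta_{\sqrt{\lambda}}=e^{\frac12 (\xi\cdot\nabla)(\log\lambda)}$ on $\R^n$, the generators commute with the dilations they generate, so
$$\lambda\tfrac{d}{d\lambda}\delta_{\sqrt{\lambda}}=\tfrac12\, B\,\delta_{\sqrt{\lambda}}=\tfrac12\,\delta_{\sqrt{\lambda}}B\quad(\text{on }\C^n),$$
and analogously with $\xi\cdot\nabla$ in place of $B$ on $\R^n$; the same is true for $\delta^{-1}_{\sqrt{\lambda}}$ with an opposite sign.

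First I would differentiate the outer factors. The two boundary terms give $\tfrac12 B\,T^\lambda_{m(\lambda)}-\tfrac12 T^\lambda_{m(\lambda)}B=\tfrac12[B,T^\lambda_{m(\lambda)}]$, accounting for the first term in the statement once multiplied by $2$. The remaining contribution is $\delta_{\sqrt{\lambda}}\bigl(\lambda\tfrac{d}{d\lambda}T^1_{\tilde m(\lambda)}\bigr)\delta^{-1}_{\sqrt{\lambda}}=\delta_{\sqrt\lambda}T^1_{\lambda \frac{d}{d\lambda}\tilde m(\lambda)}\delta^{-1}_{\sqrt\lambda}$, using linearity of the Weyl multiplier map $S\mapsto T^1_S$.

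Next I would compute $\lambda\tfrac{d}{d\lambda}\tilde m(\lambda)$ by the same Leibniz-rule argument. The differentiation of the inner dilations produces $-\tfrac12(\xi\cdot\nabla)\tilde m(\lambda)+\tfrac12\tilde m(\lambda)(\xi\cdot\nabla)=\tfrac12[\tilde m(\lambda),\xi\cdot\nabla]$, and differentiating the middle factor $m(\lambda)$ yields $\delta^{-1}_{\sqrt{\lambda}}\bigl(\lambda\tfrac{d}{d\lambda}m(\lambda)\bigr)\delta_{\sqrt{\lambda}}=\widetilde{\lambda m'(\lambda)}$. Since $\xi\cdot\nabla$ commutes with $\delta_{\sqrt{\lambda}}$, a short computation shows $[\tilde m(\lambda),\xi\cdot\nabla]=\widetilde{[m(\lambda),\xi\cdot\nabla]}$, so both pieces are of the form $\widetilde{(\cdot)}$.

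Finally I would reassemble: multiplying by $2$,
$$2\lambda\tfrac{d}{d\lambda}\tilde m(\lambda)=\widetilde{[m(\lambda),\xi\cdot\nabla]}+\widetilde{2\lambda m'(\lambda)},$$
and reading Lemma 2.1 backwards, $\delta_{\sqrt\lambda}T^1_{\widetilde{S(\lambda)}}\delta^{-1}_{\sqrt\lambda}=T^\lambda_{S(\lambda)}$, so the two terms become $T^\lambda_{[m(\lambda),\xi\cdot\nabla]}$ and $T^\lambda_{2\lambda m'(\lambda)}$, yielding the claimed identity. The only step requiring any care is verifying that $\xi\cdot\nabla$ commutes with $\delta_{\sqrt\lambda}$ (a direct check: both sides equal $\sqrt\lambda\,\xi\cdot(\nabla f)(\sqrt\lambda\,\xi)$), since this is what allows the inner commutator to be pushed through the conjugation and to identify it with the Weyl multiplier of $[m(\lambda),\xi\cdot\nabla]$; apart from this bookkeeping, the proof is a clean application of the Leibniz rule.
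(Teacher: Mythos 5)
Your proposal is correct and follows essentially the same route as the paper: differentiate the conjugation identity $T^\lambda_{m(\lambda)}=\delta_{\sqrt{\lambda}}T^1_{\tilde m(\lambda)}\delta^{-1}_{\sqrt{\lambda}}$ via the exponential form of the dilations, use that $B$ and $\xi\cdot\nabla$ commute with $\delta_{\sqrt{\lambda}}$, and push the inner commutator back through the conjugation. The only cosmetic difference is that you assemble $\tfrac12[B,T^\lambda_{m(\lambda)}]$ directly from the boundary terms, whereas the paper first writes $\delta_{\sqrt{\lambda}}[B,T^1_{\tilde m(\lambda)}]\delta^{-1}_{\sqrt{\lambda}}$ and then identifies it with $[B,T^\lambda_{m(\lambda)}]$.
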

 \begin{proof}
 Differentiating the equation
 $T^\lambda_{m(\lambda)}= \delta_{\sqrt{\lambda}}T^1_{\tilde{m}(\lambda)}\delta^{-1}_{\sqrt{\lambda}}$
 we get
 $$2\lambda \frac{d}{d\lambda}T^\lambda_{m(\lambda)}=\delta_{\sqrt{\lambda}}[B, T^1_{\tilde{m}(\lambda)}]\delta_{\sqrt{\lambda}}^{-1}+
 \delta_{\sqrt{\lambda}}2 \lambda \frac{d}{d\lambda}T^1_{\tilde{m}(\lambda)}\delta_{\sqrt{\lambda}}^{-1}.$$
 Since $\tilde{m}(\lambda)=\delta_{\sqrt{\lambda}}^{-1} m(\lambda)
 \delta_{\sqrt{\lambda}},$ by differentiating the equation
 $$W(T^1_{\tilde{m}(\lambda)}f)=\delta_{\sqrt{\lambda}}^{-1} m(\lambda)\delta_{\sqrt{\lambda}} W(f)$$
 and using the expression for $\delta_{\sqrt{\lambda}}$ we get
 $$2 \lambda \frac{d}{d\lambda} \tilde{m}(\lambda)=\delta_{\sqrt{\lambda}}^{-1} 2\lambda \frac{d}{d\lambda}m(\lambda) \delta_{\sqrt{\lambda}}+
 \delta_{\sqrt{\lambda}}^{-1}[m(\lambda), \xi \cdot\nabla]\delta_{\sqrt{\lambda}}.$$
 As $B$ and $\xi \cdot \nabla$ commute with $\delta_{\sqrt{\lambda}}$ we
 get
 $$\delta_{\sqrt{\lambda}}[B, T^1_{\tilde{m}(\lambda)}] \delta_{\sqrt{\lambda}}^{-1}= [B, T^\lambda_{m(\lambda)}]$$
 and
 $$\delta_{\sqrt{\lambda}}^{-1}[m(\lambda), \xi \cdot \nabla] \delta_{\sqrt{\lambda}}=[\tilde{m}(\lambda), \xi\cdot\nabla].$$
 This completes the proof.

 \end{proof}
\section[Weighted norm estimates for Weyl multipliers]
{Weighted norm estimates for Weyl multipliers}
\setcounter{equation}{0}
 \setcounter{equation}{0}
 In this section our aim is to show that when the multiplier $m$
 satisfies condition $(M_{2(n+1)})$ the Weyl multiplier $T_m$ is
 bounded on $L^p(\C^n, w)$ for all $w\in A_p(\C^n), 1<p<\infty$.
 Any such Weyl multiplier is a twisted convolution operator: $T_m
 f=k\times f$ for a distribution $k$ on $\C^n$. We will show that
 conditions on $m$ can be translated into estimates on the kernel
 $k$ which will then be used to prove the weighted norm
 inequality. We begin with the following result.
 \begin{thm}
 Consider the operator $Tf=k\times f$ where $k\in L^2(\C^n)$
 satisfies the estimate
 \be
|k(z-u)e^{-\frac{i}{2} \Im (z.\bar{u})}-k(z)|\leq C~
 \frac{|u|^\delta}{|z|^{2n+\delta}}
 \ee
 for some $\delta>0$ and all $|z|>2|u|$. Assume that $T$ is
 bounded on $L^p(\C^n)$ and $Tf\in L^p(\C^n, w)$ for a dense class
 of functions in $L^p(\C^n, w)$ for all $w\in A_p(\C^n),
 1<p<\infty$. Then
 $$\int_{\C^n}|Tf|^p w(z) dz \leq C~ \int_{\C^n} |f(z)|^pw(z)dz. $$

 \end{thm}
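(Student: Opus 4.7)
The plan is to recognize $T$ as a Calder\'on--Zygmund type operator adapted to the twisted convolution structure and derive the weighted bound via the Fefferman--Stein sharp maximal function argument. The first step is to integrate the pointwise estimate (3.1): using $\int_{|z|>2|u|} |z|^{-2n-\delta}\,dz = C|u|^{-\delta}$, the hypothesis yields the integral H\"ormander condition
$$\int_{|z|>2|u|} |k(z-u)e^{-\frac{i}{2}\Im(z\cdot\bar u)}-k(z)|\,dz \leq C_0$$
uniformly in $u\in\C^n$. This is the natural integral form of H\"ormander's regularity for twisted convolution kernels, reflecting the fact that the kernel $K(z,w)=k(z-w)e^{\frac{i}{2}\Im(z\cdot\bar w)}$ of $T$ behaves nicely under twisted, rather than Euclidean, translations.

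The second step is to use this condition together with the $L^q$ boundedness of $T$ to prove a pointwise sharp maximal estimate
$$M^{\#}(Tf)(z_0) \leq C\,M(|f|^q)(z_0)^{1/q}$$
for some $q>1$. On a ball $B=B(z_0,r)$, decompose $f=f_1+f_2$ with $f_1=f\,\mathbf{1}_{2B}$. The $L^q$ boundedness of $T$ and H\"older's inequality control $|B|^{-1}\int_B |Tf_1|\,dz$ by $M(|f|^q)(z_0)^{1/q}$. For the non-local part $Tf_2$ one must estimate $|Tf_2(z)-Tf_2(z_0)|$ for $z\in B$, and this is the main obstacle: the phase $e^{\frac{i}{2}\Im(z\cdot\bar w)}$ produces extra unimodular factors in the kernel difference $K(z,w)-K(z_0,w)$ whose arguments grow linearly with $|z_0|$ and are not controlled uniformly by (3.1). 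The resolution is to exploit the twisted-translation invariance $T\tau_a=\tau_a T$, where $\tau_a f(z)=f(z-a)\,e^{\frac{i}{2}\Im(z\cdot\bar a)}$: writing $Tf(z)=\tau_{z_0}(T\tau_{-z_0}f)(z)$ transfers the oscillation estimate on $B(z_0,r)$ to one at the origin for the twisted translate $\tau_{-z_0}f$, where the phases vanish at the center and the integral H\"ormander condition from the first step applies cleanly. Since $|\tau_{-z_0}f(\cdot)|=|f(\cdot+z_0)|$, maximal functions of $f$ at $z_0$ coincide with those of $\tau_{-z_0}f$ at $0$, completing the sharp maximal bound.

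The third step is to combine this with the Fefferman--Stein inequality $\|g\|_{L^p(w)}\leq C\|M^{\#}g\|_{L^p(w)}$, valid for $w\in A_\infty(\C^n)$ whenever the left-hand side is \emph{a priori} finite. Applied to $g=Tf$, with finiteness ensured by the denseness hypothesis, this yields $\|Tf\|_{L^p(w)}\leq C\|M(|f|^q)^{1/q}\|_{L^p(w)}$. By the openness of the $A_p$ classes one may choose $q>1$ close enough to $1$ that $w\in A_{p/q}$, and Muckenhoupt's theorem on the boundedness of $M$ on $L^{p/q}(w)$ gives $\|M(|f|^q)^{1/q}\|_{L^p(w)}\leq C\|f\|_{L^p(w)}$. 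This completes the proof on the dense class, and the full estimate follows by density.
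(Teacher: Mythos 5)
You correctly isolate the real difficulty (the phase in the twisted kernel ruins the usual comparison of $Tf_2(z)$ with $Tf_2(z_0)$) and the right mechanism for handling it (twisted translations, i.e.\ comparison against modulated constants), and your steps 1 and the local part of step 2 match the paper's argument. But there is a genuine gap at the junction of your steps 2 and 3. The identity $Tf=\tau_{z_0}(T\tau_{-z_0}f)$ gives, for $z\in B(z_0,r)$, the relation $|Tf(z)-c\,e^{\frac{i}{2}\Im(z\cdot\bar{z_0})}|=|(T\tau_{-z_0}f)(z-z_0)-c|$, so what ``transfers to the origin'' is the mean oscillation of $Tf$ \emph{relative to the modulated constant} $c\,e^{\frac{i}{2}\Im(z\cdot\bar{z_0})}$, not relative to a genuine constant. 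In other words, your argument establishes a bound for the \emph{twisted} sharp maximal function, $\tilde{M}^\sharp(Tf)\leq C\,M_qf$ (this is exactly the paper's Lemma 3.2), and not for the ordinary $M^\sharp(Tf)$. The ordinary bound is in fact false in general: a function that agrees with $e^{\frac{i}{2}\Im(z\cdot\bar{z_0})}$ on $B(z_0,r)$ has twisted oscillation zero there, while its ordinary mean oscillation is comparable to $\min(1,r|z_0|)$, which is not controlled uniformly in $z_0$. So one cannot pass from your step 2 to the standard Fefferman--Stein inequality.

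What is missing is the weighted Fefferman--Stein (good-$\lambda$) inequality with $\tilde{M}^\sharp$ in place of $M^\sharp$, namely $w(\{M_dg>2\lambda,\ \tilde{M}^\sharp g\leq\gamma\lambda\})\leq C\gamma^\delta\, w(\{M_dg>\lambda\})$ for $w\in A_\infty$, which yields $\|M_dg\|_{L^p(w)}\leq C\|\tilde{M}^\sharp g\|_{L^p(w)}$ under the a priori finiteness you mention. This is the one genuinely new ingredient, and it is what the paper supplies as Lemma 3.3. The modification of the classical proof is mild --- on a maximal dyadic cube one replaces the mean $g_Q$ by the twisted mean $\tilde{g}_Q$, using $|g(z)|\leq|g(z)e^{-\frac{i}{2}\Im(z\cdot\bar{u})}-\tilde{g}_Q|+|\tilde{g}_Q|$ and $|\tilde{g}_Q|\leq\frac{1}{|Q|}\int_Q|g|$ --- but it must be stated and proved. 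Once you have it, the rest of your step 3 (a priori finiteness from the dense class, choosing $q$ with $w\in A_{p/q}$ by openness, Muckenhoupt's theorem for $M$) closes the proof exactly as in the paper.
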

 The proof of this theorem uses standard arguments. It is
 well-known that Calderon-Zygmund operators are bounded on
 $L^p(\R^n, w), w\in A_p(\R^n), 1<p<\infty$, see e.g. Theorem 7.11
 in \cite{D}. Our operator $T $ is sort of an oscillatory singular integral
 operator and hence the arguments used in proving Theorem 7.11 in
 \cite{D} can be suitably modified to prove Theorem 3.1.\\

 The sharp maximal function $M^\sharp$ used in the literature needs to be
 modified. We define the twisted sharp maximal function
 $\tilde{M}^\sharp$ by
 $$\tilde{M}^\sharp f(v)=\sup_{v\in Q}\frac{1}{|Q|}\int_Q |f(z)e^{-\frac{i}{2}\Im(z\cdot \bar{u})}-\tilde{f_Q}|dz$$
 where $f$ is a locally integrable function, $Q$ is a cube, $u$
 its center and
 $$\tilde{f}_Q=\frac{1}{|Q|}\int_Q f(z)e^{-\frac{i}{2}\Im(z\cdot \bar{u})}dz.$$
 For any $s>1$ let $M_s f=(M|f|^s)^{\frac{1}{s}}$, $M$ being the
 Hardy-Littlewood maximal function.
 \begin{lem}
 Let $k\in L^2(\C^n)$ satisfies the condition (3.1) in the above
 theorem. If $Tf=k\times f$ is bounded on $L^p(\C^n), 1<p<\infty$
 then
 $$\tilde{M}^\sharp (Tf)(v)\leq C~ M_s f(v)$$
 for any $1<s<\infty$.
 \end{lem}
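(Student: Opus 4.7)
Fix $v\in Q$ and let $u$ denote the center of $Q$; write $r$ for the sidelength, so that $v\in Q\subset B(u,cr)$ for a dimensional constant $c$. As usual the proof splits $f$ into a local and a far piece. Namely, let $f_1 = f\chi_{2Q}$ and $f_2 = f\chi_{(2Q)^c}$, so that $Tf = Tf_1 + Tf_2$. Using the elementary inequality $\frac{1}{|Q|}\int_Q |g-\tilde g_Q|\le 2\,\frac{1}{|Q|}\int_Q |g-a|$ valid for any constant $a$, I would choose $a = Tf_2(u)$ and estimate separately
\[
I_1 := \frac{1}{|Q|}\int_Q \bigl|Tf_1(z)\,e^{-\frac{i}{2}\Im(z\cdot\bar u)}\bigr|\,dz,\qquad
I_2 := \frac{1}{|Q|}\int_Q \bigl|Tf_2(z)\,e^{-\frac{i}{2}\Im(z\cdot\bar u)}-Tf_2(u)\bigr|\,dz.
\]

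For $I_1$, since $T$ is bounded on $L^s(\C^n)$ for every $1<s<\infty$, H\"older's inequality gives
\[
I_1 \le \Bigl(\frac{1}{|Q|}\int_Q|Tf_1|^s\Bigr)^{1/s}
\le \frac{C}{|Q|^{1/s}}\,\|f_1\|_s
\le C\Bigl(\frac{1}{|2Q|}\int_{2Q}|f|^s\Bigr)^{1/s}\le C\,M_s f(v).
\]

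For $I_2$, the point is that condition (3.1) is exactly the right estimate to compare the twisted convolution at the point $z\in Q$ with its value at the center $u$, after compensating for the oscillatory phase. Writing $\zeta=z-u$ and $\eta=w-u$, a direct calculation using $\Im(z\cdot\bar w)-\Im(z\cdot\bar u) = \Im(z\cdot\overline{(w-u)}) = \Im(\zeta\cdot\bar\eta)+\Im(u\cdot\bar\eta)$ and $\Im(u\cdot\bar w) = \Im(u\cdot\bar\eta)$ yields
\[
Tf_2(z)\,e^{-\frac{i}{2}\Im(z\cdot\bar u)}-Tf_2(u)
=\int_{(2Q)^c}\! e^{\frac{i}{2}\Im(u\cdot\bar\eta)}\!\left[k(\zeta-\eta)\,e^{\frac{i}{2}\Im(\zeta\cdot\bar\eta)}-k(-\eta)\right]f(u+\eta)\,d\eta.
\]
Applying the kernel estimate (3.1) with the roles of $z$ and $u$ played by $-\eta$ and $-\zeta$ (and noting $\Im(\eta\cdot\bar\zeta)=-\Im(\zeta\cdot\bar\eta)$), the bracketed factor is dominated by $C|\zeta|^\delta|\eta|^{-2n-\delta}$ whenever $|\eta|\ge 2|\zeta|$, which holds for all $z\in Q$ and $w\in(2Q)^c$. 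A standard dyadic decomposition of the region $\{|\eta|\gtrsim r\}$ then gives, uniformly in $z\in Q$,
\[
\bigl|Tf_2(z)\,e^{-\frac{i}{2}\Im(z\cdot\bar u)}-Tf_2(u)\bigr|
\le C\,r^\delta\sum_{j\ge 0}(2^j r)^{-\delta}\,M f(v)
\le C\,M_s f(v),
\]
so $I_2\le C\,M_s f(v)$. Taking the supremum over $Q\ni v$ combines the two estimates into $\tilde M^\sharp(Tf)(v)\le C\,M_s f(v)$.

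\textbf{Main obstacle.} The whole argument hinges on matching the phase factor $e^{-\frac{i}{2}\Im(z\cdot\bar u)}$ prescribed by the definition of $\tilde M^\sharp$ with the phases that arise naturally from twisted convolution, so that the kernel condition (3.1) becomes directly applicable. Getting the bookkeeping of these phases right (in particular identifying $Tf_2(u)$, rather than a more naive constant, as the correct subtraction) is the step that requires care; once the phases align, the rest is routine Calder\'on--Zygmund technology adapted from the classical proof.
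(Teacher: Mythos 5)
Your proof is correct and follows essentially the same route as the paper, which merely cites Lemma 7.9 of Duoandikoetxea with $f_Q$ replaced by $\tilde f_Q$ and carries out the identical phase bookkeeping explicitly in its proof of the $L^2$ analogue (Lemma 3.10): split off $f\chi_{2Q}$, handle it by $L^s$ boundedness, and compare the far part at $z$ and at the center $u$ via condition (3.1) after untwisting the phases. The only cosmetic point is that for $z\in Q$ and $w\notin 2Q$ one has $|\eta|\ge |\zeta|$ only up to the dimensional factor coming from the half-diagonal of $Q$, so the local piece should really be cut off on a slightly larger dilate $cQ$ to guarantee $|\eta|\ge 2|\zeta|$; this changes nothing in the argument.
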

 The proof of this lemma is similar to that of Lemma 7.9 in \cite{D}.
 All we have to do is use $\tilde{f}_Q$ in place of $f_Q$.

Let $M_d$ stand for the dyadic maximal function  (see Section 5, Chapter 2 in \cite{D}).\\
 \begin{lem}
 Let $w\in A_p(\C^n), 1\leq p_0\leq p<\infty$. Then
 $$\int_{\C^n}|M_df(z)|^pw(z)dz\leq C~\int_{\C^n}|\tilde{M}^\sharp f(z)|^p w(z) dz$$
 whenever $M_d f\in L^{p_0}(\C^n,w)$.
 \end{lem}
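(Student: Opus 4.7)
I will adapt the classical Fefferman--Stein good-$\lambda$ argument (as in Theorem 2.20 of \cite{D}) to accommodate the oscillatory factor that distinguishes $\tilde M^\sharp$ from the ordinary sharp maximal function. The organizing observation is that $|e^{-\frac{i}{2}\Im(z\cdot\bar u)}|=1$: if I set $g_u(z)=f(z)e^{-\frac{i}{2}\Im(z\cdot\bar u)}$, then $|g_u|=|f|$ pointwise, so the dyadic maximal function of $f$ agrees with that of $g_u$, while the twisted average $\tilde f_Q$ is precisely the ordinary average of $g_u$ over the cube $Q$ of center $u$. This lets me translate oscillatory estimates for $f$ into ordinary estimates for $g_u$ on a cube-by-cube basis.

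The goal is the good-$\lambda$ inequality
$$
w\bigl(\{M_d f>b\alpha,\ \tilde M^\sharp f\leq\gamma\alpha\}\bigr)
\leq C\gamma^{\epsilon}\, w\bigl(\{M_d f>\alpha\}\bigr),
$$
valid for all $\alpha>0$ and small $\gamma>0$, with $b>1$ fixed and $\epsilon=\epsilon(w)>0$ supplied by the $A_\infty$ property of $w\in A_p$. To prove it, I Calderón--Zygmund decompose $\{M_d f>\alpha\}$ into the maximal dyadic cubes $\{Q_j\}$ with centers $\{u_j\}$; the parents $\widetilde Q_j$ satisfy $|f|_{\widetilde Q_j}\leq\alpha$, hence $|f|_{Q_j}\leq 2^{2n}\alpha$. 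For $z\in Q_j$, the dyadic cubes above $Q_j$ contribute at most $\alpha$ to $M_d f(z)$, so if $M_d f(z)>b\alpha$ the supremum is attained on some dyadic $R\subseteq Q_j$. Writing $g_j(z)=f(z)e^{-\frac{i}{2}\Im(z\cdot\bar u_j)}$ and using $|f|=|g_j|$,
$$
\frac{1}{|R|}\int_R|f|\;=\;\frac{1}{|R|}\int_R|g_j|\;\leq\;\frac{1}{|R|}\int_R|g_j-\tilde f_{Q_j}|\,dz+|\tilde f_{Q_j}|,
$$
with $|\tilde f_{Q_j}|\leq|f|_{Q_j}\leq2^{2n}\alpha$. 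Fixing $b$ so large that $b-2^{2n}>1$, the condition $M_d f(z)>b\alpha$ forces $M_d^{Q_j}\bigl((g_j-\tilde f_{Q_j})\chi_{Q_j}\bigr)(z)>\alpha$ at such points. If the good-$\lambda$ set meets $Q_j$ at some $v$, then taking $Q=Q_j$ in the definition of $\tilde M^\sharp$ gives
$$
\frac{1}{|Q_j|}\int_{Q_j}|g_j-\tilde f_{Q_j}|\,dz\;\leq\;\tilde M^\sharp f(v)\;\leq\;\gamma\alpha;
$$
the weak-type $(1,1)$ bound for $M_d$ then yields $|\{z\in Q_j:M_d f(z)>b\alpha,\ \tilde M^\sharp f(z)\leq\gamma\alpha\}|\leq C\gamma|Q_j|$. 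Summing over $j$ gives the good-$\lambda$ inequality in Lebesgue measure, and the $A_\infty$ property of $w$ upgrades $|E|/|Q_j|\leq C\gamma$ into $w(E)/w(Q_j)\leq C\gamma^\epsilon$, producing the weighted version.

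Finally, I integrate the good-$\lambda$ inequality against $p\alpha^{p-1}\,d\alpha$ via the layer-cake formula to obtain
$$
\int_{\C^n}|M_d f|^p\, w\,dz\;\leq\;C\gamma^\epsilon\!\int_{\C^n}|M_d f|^p\,w\,dz\;+\;C_{b,\gamma}\!\int_{\C^n}|\tilde M^\sharp f|^p\,w\,dz,
$$
and choose $\gamma$ small enough to absorb the first term into the left-hand side. The hypothesis $M_d f\in L^{p_0}(\C^n,w)$ with $p_0\leq p$ is invoked in the standard way (truncating $f$ and passing to the limit) to certify that the quantity being absorbed is finite, so that absorption is legal. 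The only genuine technical point is the choice of threshold $b>2^{2n}+1$, necessary because $|\tilde f_{Q_j}|$ can be as large as $2^{2n}\alpha$ in the dyadic decomposition; this is identical to the obstacle in the classical Fefferman--Stein proof, and the unimodular twisting factor intervenes only cosmetically via the substitution $f\rightsquigarrow g_j$.
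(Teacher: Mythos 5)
Your argument is correct and follows exactly the route the paper takes: the paper's proof consists of citing the good-$\lambda$ inequality for $M_d$ versus $\tilde M^\sharp$ (referring to Lemmas 6.9, 6.10 and 7.10 of \cite{D}) and asserting that the classical proof "goes through with slight modifications on account of the twist," which is precisely what you have carried out via the unimodular substitution $g_u(z)=f(z)e^{-\frac{i}{2}\Im(z\cdot\bar u)}$. Your write-up simply supplies the details the paper leaves to the reader, so there is nothing to add.
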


 The proof of this lemma depends on good-$\lambda$ inequality: for
 some $\delta>0$
 $$w(\{z\in \C^n: M_df(z)>2\lambda, \tilde{M}^\sharp f(z)\leq \gamma \lambda\})$$
$$ \leq C~ \gamma^\delta w(\{z\in \C^n:M_df(z)>\lambda\}).$$
 Once we have this inequality, the lemma can be proved by
 expressing the $L^p$ norm of $M_d f$ in terms of its distribution
 function. See the proof of Lemma 6.9 in \cite{D}.\\

 The good-$\lambda$ inequality with $M^\sharp$ in place of
 $\tilde{M}^\sharp$ has been proved in \cite{D} (see Lemma 6.10 and Lemma
 7.10). The same proof goes through with slight modifications on
 account of the 'twist'. We leave the details to the reader.\\

 It is now easy to prove Theorem 3.1. By the hypothesis, there
 is a dense class $\mathcal{D}\subset L^p(\C^n, w)$ such that $Tf\in
 L^p(\C^n, w)$ for $f\in \mathcal{D}$. As $w\in A_p(\C^n)$, there
 exists $s, 1<s<p$ such that $w\in A_{p/s}(\C^n)$. For
 $f\in \mathcal{D}$, $|Tf(z)|\leq M_d (Tf)(z)$ a.e and hence
 $$\int_{\C^n}|Tf(z)|^p w(z)dz\leq \int_{\C^n} (M_d(Tf)(z))^pw(z)dz.$$
 By Lemma 3.2, 3.3 and the boundedness of $M_s$ we get
 $$\int_{\C^n} |Tf(z)|^p w(z)dz\leq C~ \int_{\C^n}|f(z)|^p w(z)dz.$$
 This completes the proof of Theorem 3.1.\\

 The existence of a dense class of functions $\mathcal{D}\subset
 L^p(\C^n, w)$ such that $Tf\in L^p(\C^n, w), f\in \mathcal{D}$ is
 guaranteed once we assume another estimate on the kernel $k$.
 Indeed, under the assumption
 \be
 |k(z)|\leq C~ |z|^{-2n-\theta}\;\;\;\;\;\text{for some } \theta >0
\ee we can show that $Tf\in L^p(\C^n, w)$ whenever $f\in
C^\infty_0(\C^n)$ (which is dense in $L^p(\C^n, w)$). This has
been proved for Calderon-Zygmund operators in Theorem 3.11 of
\cite{D}. As it only uses the size estimate our assertion is
proved. We can now restate Theorem 3.1 in the following form.
\begin{thm}
Consider the operator $Tf=k\times f $ where $k\in L^2 (\C^n)$
satisfies (3.1) and (3.2). If $T$ is bounded on $L^p(\C^n)$,
$1<p<\infty$ then it is also bounded on $L^p(\C^n,w)$, for all
$w\in A_p(\C^n), 1<p<\infty$.
\end{thm}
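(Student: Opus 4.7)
The plan is to deduce Theorem 3.4 directly from Theorem 3.1 by verifying, via the size bound (3.2), the one hypothesis of Theorem 3.1 that is not yet in hand: the existence of a dense class $\mathcal{D}\subset L^p(\C^n,w)$ on which $T$ already maps into $L^p(\C^n,w)$. Once this is supplied, Theorem 3.1 applies verbatim and produces the weighted bound.

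The dense class I would take is $\mathcal{D}=C_0^\infty(\C^n)$, which is dense in $L^p(\C^n,w)$ for every $w\in A_p(\C^n)$ by standard truncation and mollification (since any $A_p$-weight is locally integrable and finite a.e.). For $f\in C_0^\infty(\C^n)$ with $\text{supp}\,f\subset B(0,R)$, I would split
$$\int_{\C^n}|Tf(z)|^p w(z)\,dz=\int_{B(0,2R)}|Tf(z)|^p w(z)\,dz+\int_{|z|>2R}|Tf(z)|^p w(z)\,dz.$$
The local piece is finite by combining the unweighted $L^p$-boundedness of $T$ (which gives $Tf\in L^p(\C^n)$) with H\"older's inequality and the local integrability of $w$. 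For the tail, since $|z-w|\approx|z|$ when $|z|>2R$ and $w\in\text{supp}\,f$, the size bound (3.2) yields the pointwise estimate
$$|Tf(z)|\leq \int_{B(0,R)}|k(z-w)||f(w)|\,dw\leq C~|z|^{-2n-\theta}\,\|f\|_1,\qquad |z|>2R.$$
A dyadic decomposition of the region $\{|z|>2R\}$, combined with the fact that $A_p$-weights have at most polynomial growth at infinity (so that $w(B(0,2^k R))\lesssim 2^{kN}R^N$ for some $N=N(p,w)$), shows that the tail integral converges for $\theta>0$. This argument is identical to the one used for classical Calder\'on--Zygmund operators in Theorem 3.11 of \cite{D}, and since only the size estimate on the kernel is used it carries over without change to our oscillatory setting.

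With $\mathcal{D}=C_0^\infty(\C^n)$ now verified to satisfy the hypotheses of Theorem 3.1, that theorem yields $\|Tf\|_{L^p(\C^n,w)}\leq C~\|f\|_{L^p(\C^n,w)}$ for $f\in\mathcal{D}$, and the density of $\mathcal{D}$ in $L^p(\C^n,w)$ together with Fatou's lemma extends this to all of $L^p(\C^n,w)$. There is no serious obstacle here: Theorem 3.4 is really a cleaner restatement of Theorem 3.1 in which the extra size condition (3.2) has been used to dispense with the auxiliary hypothesis about $\mathcal{D}$.
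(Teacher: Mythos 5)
Your proposal is correct and follows essentially the same route as the paper: the authors also take $\mathcal{D}=C_0^\infty(\C^n)$, invoke the argument of Theorem 3.11 in \cite{D} (which uses only the size estimate (3.2)) to check $Tf\in L^p(\C^n,w)$ for such $f$, and then apply Theorem 3.1. Your write-up merely makes explicit the local/tail splitting and the doubling of the $A_p$ weight that the paper leaves to the cited reference (and carries out in detail only for the later $L^2$ analogue after Theorem 3.9).
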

Thus in order to prove weighted norm inequalities for $T_m$ we
only need to prove estimates (3.1) and (3.2) for the kernel $k$ of
$T_m$.
\begin{thm}
Let $m\in B(L^2(\R^n))$ be of class $C^{2n+2}$ and satisfy
Mauceri's condition $(M_{2n+2})$. Then
\begin{enumerate}
\item $|k(z)|\leq C~ |z|^{-2n-\theta}$ for some $\theta\geq 0,$
\item $|k(z-u)e^{-\frac{i}{2}\Im(z\cdot \bar{u})}-k(z)|\leq
\frac{|u|^\delta}{|z|^{2n+\delta}}$
\end{enumerate}
for some $\delta > 0$ and for all $|z|> 2|u| $ where $k(z)$ is the
kernel of the operator $M=T_m$.
\end{thm}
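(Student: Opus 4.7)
The operator $T_m$ is twisted convolution by the kernel $k$ with $W(k)=m$, so the task is to extract pointwise bounds on $k$ from the Hilbert--Schmidt conditions on $m$. The strategy is a dyadic spectral decomposition combined with the dictionary provided by Lemma 2.3 between multiplication by $z_j,\bar z_j$ on the kernel side and the non-commutative derivations $\bar\delta_j,\delta_j$ on the symbol side. Write $m=\sum_{j\ge 1} m\chi_j$ and correspondingly $k=\sum_j k_j$ with $W(k_j)=m\chi_j$; each $k_j$ is Schwartz since $m\chi_j$ is finite rank.

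Step 1 (moment bounds). Iterating the identities $W(z_j f)=-2i\bar\delta_j(W(f))$ and $W(\bar z_j f)=-2i\delta_j(W(f))$ from Lemma 2.3, one obtains
$$W(\bar z^\alpha z^\beta k_j)=(-2i)^{|\alpha|+|\beta|}\,\delta^\alpha\bar\delta^\beta(m\chi_j)+\text{(lower-order commutator terms)}.$$
Applying the Leibniz rule to $\delta^\alpha\bar\delta^\beta(m\chi_j)$ distributes the derivations between $m$ and $\chi_j$. The terms with no derivatives on $\chi_j$ are controlled directly by the hypothesis $(M_{2n+2})$, giving $\|(\delta^\alpha\bar\delta^\beta m)\chi_j\|_{HS}^2\le C\,2^{-j(|\alpha|+|\beta|-n)}$. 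The terms where some derivations fall on $\chi_j$ are harmless because $\delta^{\alpha''}\bar\delta^{\beta''}\chi_j$ is spectrally supported in a bounded number of shells $\tilde\chi_j$ adjacent to $\chi_j$; one then combines an HS bound $\|(\delta^{\alpha'}\bar\delta^{\beta'} m)\tilde\chi_j\|_{HS}$ with an operator-norm bound on the derivative of $\chi_j$ and checks that the powers of $2^{j/2}$ balance. Plancherel then yields $\|\bar z^\alpha z^\beta k_j\|_2^2\le C\,2^{j(n-|\alpha|-|\beta|)}$ for $|\alpha|+|\beta|\le 2n+2$.

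Step 2 (pointwise bounds and summation). From these moment estimates, a Cauchy--Schwarz/Sobolev argument on $\C^n\simeq\R^{2n}$ (also using moment bounds on a few Heisenberg derivatives of $k_j$, obtained by the same procedure but starting from the left-invariant vector fields $Z_j,\bar Z_j$ via Lemma 2.3) gives a Schwartz-type envelope
$$|k_j(z)|\le C_N\,2^{jn}\,(1+2^{j/2}|z|)^{-N},\qquad N\le 2n+2.$$
Summing over $j$, one splits at $2^{j/2}\sim|z|^{-1}$: the low-$j$ piece contributes $\lesssim|z|^{-2n}$ while the high-$j$ tail contributes $\lesssim|z|^{-N}$ with $N>2n$, yielding (1) with some $\theta>0$.

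Step 3 (smoothness). For (2) write
$$k(z-u)e^{-\frac{i}{2}\Im(z\cdot\bar u)}-k(z)=\int_0^1\frac{d}{ds}\bigl[k(z-su)e^{-\frac{is}{2}\Im(z\cdot\bar u)}\bigr]\,ds,$$
so the difference is controlled by $|u|$ times a twisted gradient of $k$. The twist $e^{-\frac{is}{2}\Im(z\cdot\bar u)}$ combines with $\nabla k$ to produce the left-invariant Heisenberg fields applied to $k_j$, which by Lemma 2.3 correspond on the Weyl side to right-multiplication of $m\chi_j$ by $A_j(\lambda)$ or $A_j^*(\lambda)$. Repeating the $L^2$-moment machinery of Step~1 with one extra derivation produces $|\partial k_j(z)|\le C_N 2^{j(n+1/2)}(1+2^{j/2}|z|)^{-N}$, and summing as in Step~2 on the range $|z|>2|u|$ gives a bound of the form $C|u|^\delta|z|^{-2n-\delta}$ for some $\delta\in(0,1)$.

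\textbf{Main obstacle.} The delicate point is Step~1: the non-commutative derivations do not commute with $\chi_j$, so the Leibniz rule produces many cross terms. One must verify that the derivatives $\delta^{\alpha''}\bar\delta^{\beta''}\chi_j$ are effectively of order $2^{j(|\alpha''|+|\beta''|)/2}$ in operator norm \emph{only on their spectral support} near the boundary of $\chi_j$'s range, so that the product estimate balances and does not lose factors growing in $|\alpha|+|\beta|$. Getting this book-keeping right is exactly what enables the improvement over Mauceri's original kernel estimates that is claimed in Theorem 1.3.
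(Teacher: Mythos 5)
Your overall architecture --- a dyadic spectral decomposition of $m$, the dictionary $W(z^\alpha\bar z^\beta f)=(\pm 2i)^{|\alpha|+|\beta|}\bar\delta^\alpha\delta^\beta W(f)$ from Lemma 2.3, moment and gradient bounds on each piece, and a mean-value-theorem/$\min$-type summation at the end --- is the same as the paper's. The genuine gap is in your Step 1: you decompose with the \emph{sharp} projections, $m=\sum_j m\chi_j$, and the Leibniz bookkeeping you sketch does not balance. A single derivation of the cutoff, $\delta_r\chi_j=[\chi_j,A_r]$, is indeed supported on $O(1)$ eigenvalue shells at the edges of the block, but on those shells it equals $\pm A_rP_k$ and so has operator norm $\sim 2^{j/2}$. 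Hence a cross term with $\mu$ derivations on $m$ and $\nu$ on $\chi_j$ ($|\mu|+|\nu|=l$, $|\nu|\ge 1$) is only bounded by $\|(\delta^\mu m)\tilde\chi_j\|_{HS}\,\|\delta^\nu\chi_j\|_{op}\lesssim 2^{j(n-|\mu|)/2}\cdot 2^{j|\nu|/2}=2^{j(n-l)/2}\cdot 2^{j|\nu|}$, overshooting the required $2^{j(n-l)/2}$ by $2^{j|\nu|}$: each derivative landing on the sharp cutoff \emph{costs} $2^{j/2}$ exactly where a derivative landing on $m$ \emph{gains} $2^{-j/2}$. The localization to boundary shells does not rescue this, because Mauceri's hypothesis only controls $\|(\delta^{\alpha}\bar\delta^{\beta} m)\chi_N\|_{HS}$ over a full dyadic block and gives no improvement upon restriction to a few shells. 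So the claimed moment bound $\|\bar z^\alpha z^\beta k_j\|_2^2\le C\,2^{j(n-|\alpha|-|\beta|)}$ fails for your $k_j$, and the "main obstacle" you flag is not resolved by the balancing you assert.

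The paper avoids exactly this by using a \emph{smooth} spectral decomposition: $m=m_0-\sum_j mS_j$ with $S_j=e^{nt_j}e^{-t_jH}-e^{nt_{j+1}}e^{-t_{j+1}H}$, $t_j=2^{-j}$. For these heat-semigroup differences Mauceri's Lemma 4.4 gives $\|\chi_N\bar\delta^\gamma\delta^\rho S_j\|_{HS}^2\le C\,t_{j+1}^2\,2^{N(n+2-|\gamma|-|\rho|)}f_{\gamma,\rho}^2(2^Nt_{j+1})$ with $f_{\gamma,\rho}$ rapidly decreasing; that is, each derivation of $S_j$ also \emph{gains} $2^{-N/2}$, so after inserting $\sum_N\chi_N$ between the two Leibniz factors and using $\|g\times h\|_\infty\le\|g\|_2\|h\|_2$ every term balances and the $N$-sum converges. (The paper then obtains the pointwise bounds directly, with no Sobolev step, by estimating $z^\alpha\bar z^\beta k_j$ in $L^\infty$ for $l=2n,\,2n+1,\,2n+2$ and interpolating the resulting powers of $t_{j+1}$.) A secondary issue: even granting your envelope, the summation in Step 2 yields only $|k(z)|\lesssim |z|^{-2n}$ for $|z|\le 1$ (both the low-$j$ and high-$j$ pieces contribute $|z|^{-2n}$ at the split point), so your assertion that this gives $\theta>0$ does not follow; the paper's sharper per-piece bound $|k_j(z)|\le C\,t_{j+1}^{1/4}|z|^{-2n-1/2}$, obtained by interpolating the $l=2n$ and $l=2n+1$ estimates, is what produces a genuine $\theta=1/2$.
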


In order to prove the theorem, let $t_j=2^{-j}, j=1,2,...$ and
consider
$$ S_j = \sum_{k=0}^\infty (e^{-2kt_j}-e^{-2k t_{j+1}})P_k = e^{n t_j} e^{-t_j H}-e^{nt_{j+1}}e^{-t_{j+1}H}.$$
Then it follows that $\sum_{j=1}^N S_j=e^{n t_1} e^{-t_1
H}-e^{nt_{N+1}}e^{-t_{N+1}H}$ and taking limit as $N\rightarrow
\infty$ we get $I= e^{n t_1} e^{-t_1 H}-\sum_{j=1}^\infty S_j.$
Using this we decompose our operator $m$ as
$$m=m_0-\sum_{j=1}^\infty m_j, \;m_j=mS_j,\; m_0=me^{nt_1}e^{-t_1H}.$$
Let $k_j$ stand for the kernel of $m_j, j=0,1,2,...$\\

\begin{prop}
For each $j=0,1,2,...$ we have
\begin{enumerate}
\item $|k_j(z)|\leq C~~ t_{j+1}^{1/4}|z|^{-2n-1/2}$ \item
$|k_j(z-u)e^{-\frac{i}{2}\Im(z \cdot \bar{u})}-k_j(z)|\leq
C~\frac{|u|^{1/2}}{|z|^{2n+1/2}}
\min(\frac{t_j^{1/4}}{|u|^{1/2}},\frac{|u|^{1/2}}{t_{j+1}^{1/4}})$
\end{enumerate}
for all $|z|> 2|u|$.
\end{prop}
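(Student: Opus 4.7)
The plan is to convert Mauceri's operator-theoretic condition $(M_{2n+2})$ on $m$ into weighted $L^2$ estimates on the kernel $k_j$ via the Weyl transform Plancherel identity, and then extract the pointwise bounds (1) and (2) through a Sobolev embedding on $\C^n \cong \R^{2n}$.

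I would begin with the commutator identities of Lemma 2.3 at $\lambda=1$, which identify multiplication by $z_\ell$ (resp.\ $\bar z_\ell$) with the derivation $-2i\,\bar\delta_\ell$ (resp.\ $-2i\,\delta_\ell$) on the Weyl symbol. Iterating and applying Plancherel gives
\begin{equation*}
\int_{\C^n}|z|^{2N}|k_j(z)|^2\,dz \;\leq\; C\sum_{|\alpha|+|\beta|=N}\|\bar\delta^{\alpha}\delta^{\beta} m_j\|_{HS}^2.
\end{equation*}
Since $m_j = mS_j$ with $S_j = f_j(H)$ concentrated in the dyadic band $\chi_j$ (because $\sigma_{j,k}=e^{-2kt_j}-e^{-2kt_{j+1}}$ is localized at $k\sim 2^{j-1}$), I would expand $\bar\delta^\alpha\delta^\beta(mS_j)$ by the Leibniz rule, using $[A_\ell^*, f_j(H)] = A_\ell^*(f_j(H)-f_j(H+2))$ and its analogue for $A_\ell$; each commutator hitting $S_j$ yields a discrete-difference factor of size $O(t_j)$ on the relevant band. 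Combined with Mauceri's estimate $\|(\bar\delta^\alpha\delta^\beta m)\chi_r\|_{HS}^2 \leq C\,2^{r(n-|\alpha|-|\beta|)}$ (and only $r\sim j$ contributing), this should yield $\int|z|^{2N}|k_j|^2\,dz \leq C\, t_j^{N-n}$ for every integer $N$ with $0 \leq N \leq 2n+2$.

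For the pointwise estimate (1), I would apply the Sobolev embedding $H^{n+1}(\R^{2n})\hookrightarrow L^\infty$ locally on a unit ball around each $z_0$ with $|z_0|\geq 2$, weighted by $|z|^{4n+1}$. The $z$-derivatives $\partial^\gamma k_j$ correspond via Lemma 2.3 to one-sided products $A_\ell^* m_j$ and $m_j A_\ell^*$ rather than commutators, and the same dyadic argument together with $\|A_\ell^*\chi_r\|_{op}\sim 2^{r/2}$ controls their $HS$ norms. Interpolation between the integer exponents $N=2n$ and $N=2n+1$ at the half-integer weight $|z|^{4n+1}$ produces a bound of order $t_j^{n+1/2}$, and square-rooting yields the desired factor $t_{j+1}^{1/4}$.

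The smoothness estimate (2) follows a parallel strategy, exploiting the fact that the phase $e^{-i/2\,\Im(z\cdot\bar u)}$ is precisely the twist coming from Heisenberg translation, so that $k_j(\cdot-u)e^{-i/2\,\Im(\cdot\,\bar u)}$ is the Weyl inverse of a conjugate $W(u)^*m_j W(u)$. A first-order Taylor expansion in $u$ converts the difference into one extra commutator of $m_j$ with $A_\ell$ or $A_\ell^*$, absorbing a factor $|u|$; the same weighted $L^2$/Sobolev machinery with one additional $\delta$ or $\bar\delta$ gives the Lipschitz bound $C|u|/(|z|^{2n+1/2}t_{j+1}^{1/4})$, and taking the minimum with twice the size bound from (1) produces the stated estimate. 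The main obstacle I anticipate is the careful bookkeeping of $t_j$ factors through the iterated Leibniz expansion of $\bar\delta^\alpha\delta^\beta(mS_j)$, especially in the cross terms where several derivations fall on $S_j$; verifying that these assemble to give precisely the half-integer weighted $L^2$ estimate that Sobolev needs, rather than a larger power, is the technical crux.
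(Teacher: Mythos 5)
Your overall architecture (Leibniz expansion of $\bar\delta^\alpha\delta^\beta(mS_j)$, Mauceri's Lemma 4.4 to control the $S_j$ factors, size plus gradient bounds combined through the mean value theorem and a minimum) matches the paper, but the step by which you pass from operator estimates to the \emph{pointwise} bounds (1) and (2) is different from the paper's and, as written, does not close. The paper never uses Sobolev embedding: it writes each Leibniz term as $\sum_N(\bar\delta^\mu\delta^\nu m)\chi_N\cdot\chi_N(\bar\delta^\gamma\delta^\rho S_j)$ and bounds the kernel of each summand pointwise by the elementary inequality $|f\times g(z)|\le\|f\|_2\|g\|_2$, i.e.\ by the product of the two Hilbert--Schmidt norms. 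This gives $|z^\alpha\bar z^\beta k_j(z)|\le C\,t_{j+1}^{l/2-n}$ for $l=|\alpha|+|\beta|\le 2n+1$ directly, and (1) follows by interpolating the cases $l=2n$ and $l=2n+1$. Your detour through weighted $L^2$ bounds plus $H^{n+1}(\R^{2n})\hookrightarrow L^\infty$ is exactly the machinery the paper reserves for Proposition 3.14 (the $A_{p/2}$ result), where no pointwise conclusion is needed.

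The concrete gap is in your Sobolev step. Each of the $n+1$ derivatives you must control replaces $m_j$ by a one-sided product with $A_r^*$ (since $W(Z_rk_j)=i\,mS_jA_r^*$), and on the band $\chi_N$ this costs a factor $2^{N/2}\sim t_j^{-1/2}$; your accounting ignores this entirely. With the unit balls you specify, the $H^{n+1}(B(z_0,1))$ norm is dominated by the top-order term, and the weighted $L^2$ bound $\int|z|^{4n+1}|\partial^\gamma k_j|^2\,dz\lesssim t_j^{\,n+1/2-|\gamma|}$ then yields $|k_j(z)|\lesssim t_j^{-1/4}|z|^{-2n-1/2}$ --- the sign of the exponent is wrong, and the resulting series over $j$ does not produce the $\min$ needed for Theorem 3.5. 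The claim that ``square-rooting yields the desired factor $t_{j+1}^{1/4}$'' is therefore unjustified. The argument can be repaired by running the local Sobolev inequality on balls of radius $t_j^{1/2}$, so that the scaling factor $r^{|\gamma|-n}$ exactly offsets the $t_j^{-|\gamma|/2}$ cost of the derivatives, but this is a necessary idea that is absent from your write-up; alternatively, and more simply, use the paper's pointwise bound on twisted convolutions and dispense with Sobolev altogether. A smaller issue: $S_j$ is not supported in a single dyadic band $\chi_j$; one must sum over all $N$ and use the rapid decay of $f_{\gamma,\rho}(2^Nt_{j+1})$, as Mauceri's lemma provides. Your conjugation idea $W(u)^*m_jW(u)$ for part (2) is a legitimate alternative to the paper's direct mean value argument, but it inherits the same gradient estimates and hence the same problem.
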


Theorem 3.5 follows immediately once we prove this proposition.
 Indeed,
 $$|k(z)e^{-\frac{i}{2}\Im(z \cdot \bar{u})}-k(z)|\leq  C \frac{|u|^{1/2}}{|z|^{2n+1/2}}
\sum_{j=0}^\infty
\min(\frac{t_j^{1/4}}{|u|^{1/2}},\frac{|u|^{1/2}}{t_{j+1}^{1/4}})$$
and splitting the sum into two parts we see that
$$\frac{|u|^{1/2}}{|z|^{2n+1/2}}
\sum_{t_{j+1}\leq|u|^2} t_{j+1}^{1/4} |u|^{-1/2}\leq C~
\frac{|u|^{1/2}}{|z|^{2n+1/2}}$$ and also
$$\frac{|u|^{1/2}}{|z|^{2n+1/2}}
\sum_{t_{j+1}>|u|^2} t_{j+1}^{-1/4} |u|^{1/2}\leq C
\frac{|u|^{1/2}}{|z|^{2n+1/2}}.$$ Thus we only
need to prove Proposition 3.6.\\

Coming to the proof of Proposition 3.6 we claim that for all $z\in
\C^n$
 \be |z|^l |k_j(z)|\leq C~ t_{j+1}^{l/2-n}\ee
whenever $l\leq 2n+1$. In order to estimate $ |z|^l k_j(z) $ it is
enough to estimate $z^\alpha \bar{z}^\beta k_j(z)$ where
$|\alpha|+|\beta|=l$. Under the Weyl transform $ z^\alpha
\bar{z}^\beta k_j(z)$ goes into $\bar{\delta}^\alpha \delta^\beta
(mS_j)$ which by Leibniz formula for the derivations
$\bar{\delta}^\alpha$ and $\delta^\beta$ is a sum of terms of the
form
$$(\bar{\delta}^\mu \delta^\nu m)(\bar{\delta}^\gamma \delta^\rho S_j),\;\;\;|\mu|+|\nu|+|\gamma|+|\rho|=l.$$
We decompose each of these operators as
$$\sum_{N=0}^\infty (\bar{\delta}^\mu \delta^\nu m)\chi_N. \chi_N(\bar{\delta}^\gamma \delta^\rho S_j).$$
Since $|f\times g(z)|\leq ||f||_2||g||_2$, the $L^\infty$ norm of
the kernel of $(\bar{\delta}^\mu \delta^\nu m)(\bar{\delta}^\gamma
\delta^\rho S_j)$is bounded by
$$\sum_{N=0}^\infty ||(\bar{\delta}^\mu \delta^\nu m)\chi_N||_{HS} ||\chi_N(\bar{\delta}^\gamma \delta^\rho S_j)||_{HS}.$$
We now make use of the following lemma which is essentially Lemma
4.4  proved in Mauceri \cite{M}.
\begin{lem} For every $ \gamma $ and $ \rho $ we have the estimate
$$||\chi_N \bar{\delta}^\gamma \delta^\rho S_j||^2_{HS}\leq C~
t^2_{j+1} 2^{N(n+2-|\gamma|-|\rho|)}f^2_{\gamma, \rho}(2^N
t_{j+1})$$ where $f_{\gamma,\rho}$ is a rapidly decreasing
function.
\end{lem}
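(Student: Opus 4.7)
The plan is to reduce the estimate to scalar spectral calculus by exploiting that $S_j = g_j(H)$, where
$$g_j(x) = e^{nt_j}e^{-t_j x} - e^{nt_{j+1}}e^{-t_{j+1}x}.$$
First I would expand $\bar{\delta}^\gamma \delta^\rho g_j(H)$ inductively. The commutators $[H,A_i]=-2A_i$, $[H,A_i^*]=2A_i^*$ give $A_i g(H) = g(H+2) A_i$ and $A_i^* g(H) = g(H-2) A_i^*$, while $[A_i,A_j]=[A_i^*,A_j^*]=0$ and $[A_j^*,A_i] = -2\delta_{ij}$. Thus each of the $l = |\gamma|+|\rho|$ derivations either acts on the function-of-$H$ factor, producing one additional first-order discrete difference together with one new creation/annihilation letter, or contracts against a previously produced letter of the opposite type, producing a scalar $-2\delta_{ij}$ and lowering the monomial degree by one. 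Carrying this out shows that $\bar{\delta}^\gamma \delta^\rho g_j(H)$ is a finite linear combination of terms of the form $(\Delta^r g_j)(H)\cdot M$, where $\Delta^r g_j(H) = \sum c_\nu\, g_j(H+2\nu)$ is an $r$-th order difference, $M$ is a monomial in the $A_i$'s and $A_i^*$'s of total degree $m$, and a direct count of actions versus contractions forces $m = 2r-l$ with $l/2 \le r \le l$.

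Second, since $(\Delta^r g_j)(H)$ is diagonal in the Hermite basis and commutes with $\chi_N$, while $M$ shifts eigenspaces by a fixed amount,
\begin{equation*}
\|\chi_N (\Delta^r g_j)(H)\,M\|_{HS}^2 = \sum_{2k+n\in[2^{N-1},2^N)} |(\Delta^r g_j)(2k+n)|^2\,\|P_k M\|_{HS}^2.
\end{equation*}
Each $A_i$ or $A_i^*$ has matrix elements of size $O(\sqrt{k})$ between adjacent Hermite levels, so $\|P_k M\|_{HS}^2 \le C\, k^{n-1+m}$. Summing over the $\sim 2^{N-1}$ values of $k$ with $k\sim 2^N$ gives the clean bound
\begin{equation*}
\|\chi_N (\Delta^r g_j)(H)\,M\|_{HS}^2 \;\le\; C\cdot 2^{N(n+m)}\, \max_{2k+n\in[2^{N-1},2^N)} |(\Delta^r g_j)(2k+n)|^2.
\end{equation*}

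The heart of the matter is estimating the difference factor on that band. Writing $\phi(t;x) = e^{t(n-x)}$ so that $g_j(x) = \phi(t_j;x) - \phi(t_{j+1};x)$ and using $t_j - t_{j+1} = t_{j+1}$, the fundamental theorem of calculus in $t$ yields
\begin{equation*}
g_j^{(r)}(x) = (-1)^r \int_{t_{j+1}}^{t_j} \partial_t\bigl[\,t^r e^{t(n-x)}\,\bigr]\,dt.
\end{equation*}
For $r\ge 1$ one extracts $|g_j^{(r)}(x)| \le C\, t_{j+1}^r\, F_r\bigl(t_{j+1}(x-n)\bigr)$ for $x>n$, with $F_r(u)=(1+u)e^{-u}$ rapidly decreasing; for $r=0$ the same representation gives $|g_j(x)| \le t_{j+1}(x-n)\, e^{-c\,t_{j+1}(x-n)}$. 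Since $\Delta^r g_j$ is an integer-step forward difference, it is controlled by $\sup|g_j^{(r)}|$ on a nearby interval. Setting $s = 2^N t_{j+1}$ and using the algebraic identity $2^{Nr}t_{j+1}^r = s^r$, each $(r,m)$-term with $m = 2r-l$ satisfies
\begin{equation*}
\|\chi_N (\Delta^r g_j)(H) M\|_{HS}^2 \;\le\; C\, 2^{N(n-l)}\, s^{2r}\,F_r(s)^2 \;=\; C\, t_{j+1}^2\, 2^{N(n+2-l)}\,\bigl(s^{r-1}F_r(s)\bigr)^2,
\end{equation*}
and $s^{r-1}F_r(s)$ is rapidly decreasing for every $r\ge 1$; the $l=0$, $r=0$ case is handled separately using $|g_j(x)|^2 \le s^2 e^{-2cs}$, which yields the same form. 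Summing over the finitely many terms in the expansion and taking $f_{\gamma,\rho}^2(s)$ to majorize all the resulting rapidly decreasing envelopes completes the proof.

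The principal obstacle is the first step---the combinatorial expansion and the verification of the identity $m = 2r - l$ in every term. This exact relation is what makes the factor $t_{j+1}^{2r}$ from the difference bound and the factor $2^{Nm}$ from the Hilbert--Schmidt norm of the monomial combine into the homogeneous expression $t_{j+1}^2\, 2^{N(n+2-l)}$ times a rapidly decreasing function of $s=2^N t_{j+1}$; being off by one unit in any term would already destroy the claimed exponent balance.
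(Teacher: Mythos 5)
Your argument is correct, and it necessarily differs from the paper's treatment because the paper does not prove this lemma at all: it is invoked as ``essentially Lemma 4.4 of Mauceri \cite{M}'' and used as a black box, so the only available proof is the one in Mauceri's original article, which rests on explicit computations with the Hermite semigroup. What you supply instead is a self-contained operator-calculus argument: the relations $A_ig(H)=g(H+2)A_i$, $A_i^*g(H)=g(H-2)A_i^*$ and $[A_j^*,A_i]=-2\delta_{ij}$ reduce $\bar{\delta}^\gamma\delta^\rho g_j(H)$ to a finite sum of terms $(\Delta^rg_j)(H)M$ with $\deg M=2r-l$, and the Hilbert--Schmidt norm over the band $2k+n\in[2^{N-1},2^N)$ follows from $\dim P_k\sim k^{n-1}$ together with the $O(\sqrt{k})$ size of the matrix entries of each letter. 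I have checked the bookkeeping you single out as the crux: each derivation either hits the function-of-$H$ factor (raising $r$ and $\deg M$ by one) or contracts a letter of the opposite type (lowering $\deg M$ by one), so after $l$ steps $\deg M=r-(l-r)=2r-l$; since $r\ge l/2$ forces $r\ge 1$ whenever $l\ge 1$, your separate treatment of $r=0$ is needed only for $l=0$, where the sharper bound $|g_j(x)|\le t_{j+1}(x-n)e^{-t_{j+1}(x-n)}$ supplies the missing factor of $s^2$. Two points deserve an explicit sentence in a written-up version: the iterated differences carry bounded even shifts (including negative ones coming from $\bar{\delta}$), so the mean-value bound for $\Delta^rg_j$ must be taken over a window $[x-C_l,x+C_l]$, which is harmless because $F_r(u)=(1+u)e^{-u}$ varies by a bounded factor when $u$ changes by $O(t_{j+1})$; and for $r\ge 1$ the envelope $s^{r-1}F_r(s)$ is bounded near $s=0$ and rapidly decreasing at infinity, which is exactly what $f_{\gamma,\rho}$ is required to be. With these remarks your proof is complete and has the merit of not importing Mauceri's kernel computations.
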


In view of this lemma, the kernel of $(\bar{\delta}^\mu \delta_\nu
m)(\bar{\delta}^\gamma \delta^\rho S_j)$ is bounded by constant
times
$$
t_{j+1}\sum_{N=0}^\infty
2^{\frac{N}{2}(n-|\mu|-|\nu|)}2^{\frac{N}{2}(n+2-|\gamma|-|\rho|)}f_{\gamma,
\rho}(2^N t_{j+1})$$ $$=t_{j+1}\sum_{N=0}^\infty
2^{\frac{N}{2}(2n+2-l)}f_{\gamma,\rho}(2^Nt_{j+1})$$ $$\leq
C~~t_{j+1}^{-n+\frac{l}{2}}\sum_{N=0}^\infty (2^N
t_{j+1})^{(n+1-\frac{l}{2})}f_{\gamma,\rho}(2^N t_{j+1}).$$
 Since $f_{\gamma,\rho}(2^N t_{j+1})$ has exponential decay $\sum_{N=0}^\infty (2^N)^\alpha f_{\gamma,\rho}(2^N t_{j+1})$
 converges leading to the estimate $ C~ t_{j+1}^{-\alpha}$ for each $\alpha>0$. Hence the above series  can be
 estimated by
 $C_{\gamma,\rho}t_{j+1}^{-n+\frac{l}{2}}$. As this is true for every $\mu, \nu,\gamma$ and $\rho$
 satisfying $|\mu|+|\nu|+|\gamma|+|\rho|=|\alpha|+|\beta|=l$ we get
 the estimate
 \be
 |z^\alpha \bar{z}^\beta k_j(z)|\leq C_{\alpha,\beta}
 t_{j+1}^{-n+\frac{l}{2}},\;\;\; |\alpha|+|\beta|=l
 \ee
 which leads to $|z|^l|k_j(z)|\leq C_lt_{j+1}^{-n+\frac{l}{2}}$.\\

 When $l=2n$ we get $|k_j(z)|\leq C~ |z|^{-2n}$ and when $l=2n+1$ we
get $|k_j(z)| \leq C~ t_{j+1}^{1/2}|z|^{-2n-1}$ combining these
two estimates we obtain\be |k_j(z)|\leq C~
t_{j+1}^{1/4}|z|^{-2n-1/2}.\ee Again if we take $l=2n+2$ the above
series is estimated by
$$t_{j+1}\sum_{N=0}^\infty f_{\gamma,\rho}(2^N
t_{j+1})\leq \sum_{N=0}^\infty (2^Nt_{j+1})f_{\gamma,\rho}(2^N
t_{j+1})\leq C$$ Hence we also obtain the following inequality
$$|k_j(z)|\leq C~ |z|^{-2n-2}.$$ Thus we have proved (1) of Proposition 3.6. In order to prove
(2) we need to estimate the gradient of $k_j$ for which we proceed
as
follows.\\

Since $\frac{\partial}{\partial z_r}k_j=Z_rk_j-\frac{1}{4}
\bar{z_r} k_j$ and $W(Z_r k_j)=i mS_j A_r^*$, in order to estimate
$|z|^l \frac{\partial}{\partial z_r}k_j$ we have to estimate
$$\sum_{N=0}^\infty ||(\bar{\delta}^\mu \delta^\nu m)\chi_N||_{HS}||\chi_N \bar{\delta}^\gamma \delta^\rho (S_j A_r^*)||_{HS}$$
where $|\mu|+|\nu|+|\gamma|+|\rho|=l$. Since  $\bar{\delta_r}
A_r^*=0$ and $\delta_r A_r^*=2I$ it is enough to estimate
$$\sum_{N=0}^\infty ||(\bar{\delta}^\mu \delta^\nu m)\chi_N||_{HS}||\chi_N(\bar{\delta}^\gamma \delta^\rho S_j) A_r^*||_{HS}$$
We use the Hermite basis $\Phi_\alpha, \alpha\in \N^n$ to
calculate the Hilbert-Schmidt norm. Since $A_r^* \Phi_\alpha
=(2|\alpha|+2+n)^{1/2} \Phi_{\alpha +e_r}$ it follows that
$$||\chi_N(\bar{\delta}^\gamma \delta^\rho S_j) A_r^*||_{HS}^2\leq C~ t_{j+1}^2 2^{N(n+3-|\gamma|-|\rho|)} f_{\gamma, \rho}(2^N t_{j+1})$$
where we have used Lemma 3.5. Therefore,
$$\sum_{N=0}^\infty ||(\bar{\delta}^\mu \delta^\nu m)\chi_N||_{HS}||\chi_N(\bar{\delta}^\gamma \delta^\rho S_j )A_r^*||_{HS}$$
$$\leq C~ t_{j+1}\sum_{N=0}^\infty 2^{\frac{N}{2}(2n+3-l)}f_{\gamma, \rho}(2^Nt_{j+1}) \leq C~ t_{j+1}^{-n+\frac{l}{2}-1/2}.$$\\

Consequently, we have proved, by taking $l=2n$ and $l=2n+1$, the
estimates $|Z_r k_j(z)|\leq C~ t_{j+1}^{-1/2}|z|^{-2n}$ and  $|Z_r
k_j(z)|\leq C~ |z|^{-2n-1}$ and combining them we obtain the
estimate
$$|Z_r k_j(z)|\leq C~ t_{j+1}^{-1/4}|z|^{-2n-1/2}$$
We also have the estimates $|\bar{z_r}k_j(z)|\leq C~
|z||z|^{-2n-2}= C |z|^{-2n-1}$ and $|z_r k_j(z)|\leq C~ |z|
t_{j+1}^{1/2}|z|^{-2n-1}.$ Putting all these estimates together we
get
$$|\frac{\partial}{\partial z_r} k_j(z)|\leq C~ t_{j+1}^{1/4}|z|^{-2n-1/2}\leq C~ t_{j+1}^{-1/4}|z|^{-2n-1/2}.$$
Similarly we can prove $|\frac{\partial}{\partial \bar{z_r}}
k_j(z)|\leq C~ t_{j+1}^{-1/4}|z|^{-2n-1/2}$
for $r=1,2,...,n.$\\

Finally, we are ready to prove (2) of Proposition 3.6. When
$|z|>2|u|,~ |z-u|> (1/2) |z|$ and so
$$ |k_j(z-u)e^{-\frac{i}{2} \Im(z\cdot \bar{u})}-k_j(z)| \leq |k_j(z-u)|+|k_j(z)| $$
which is bounded by
$$ C~ t_{j+1}^{1/4}(|z-u|^{-2n-1/2}+|z|^{-2n-1/2})$$
$$\leq C~ t_{j+1}^{-1/4} |z|^{-2n-1/2} =  C~
\frac{|u|^{1/2}}{|z|^{2n+1/2}}\frac{t_{j+1}^{1/4}}{|u|^{1/2}}.$$
On the other hand $ |k_j(z-u)e^{-\frac{i}{2} \Im(z\cdot
\bar{u})}-k_j(z)|$ is bounded by
$$ |k_j(z-u)-k_j(z)|+|k_j(z)( e^{-\frac{i}{2} \Im(z\cdot \bar{u})}-1)|
\leq |u| |\nabla k_j(\tilde{z})|+|u||z||k_j(z)|$$ where
$\tilde{z}$ is a point on the line segment joining $(z-u)$ and
$z$. The gradient term gives the estimate
$$|u|~  |\nabla k_j(z)|\leq C~ |u| t_{j+1}^{-1/4} |\tilde{z}|^{-2n-1/2} \leq C~ |u| t_{j+1}^{-1/4} |z|^{-2n-1/2}$$
and  the other term is estimated by
$$|u||z||k_j(z)|\leq C~ |u| t_{j+1}^{1/4}|z|^{-2n-1/2}\leq C~ |u| t_{j+1}^{-1/4}|z|^{-2n-1/2}$$
which follows from $|k_j(z)|\leq C~ |z|^{-2n-2}$ and $|k_j(z)|\leq
C~ t_{j+1}^{1/2}|z|^{-2n-1}$. Thus
$$ |k_j(z-u)e^{-\frac{i}{2} \Im(z\cdot \bar{u})}-k_j(z)|
\leq C~ |u| t_{j+1}^{-1/4} |z|^{-2n-1/2} $$ which we write as $ C~
\frac{|u|^{1/2}}{|z|^{2n+1/2}}\frac{|u|^{1/2}}{t_{j+1}^{1/4}}.$
Combining the two estimates $ C~
\frac{|u|^{1/2}}{|z|^{2n+1/2}}\frac{t_{j+1}^{1/4}}{|u|^{1/2}}$ and
$ C~
\frac{|u|^{1/2}}{|z|^{2n+1/2}}\frac{|u|^{1/2}}{t_{j+1}^{1/4}}$ we
obtain
$$
|k_j(z-u)e^{-\frac{i}{2}\Im(z\cdot \bar{u})}-k_j(u)|  \leq C~
\frac{|u|^{1/2}}{|z|^{2n+1/2}}\min(\frac{t_{j+1}^{1/4}}{|u|^{1/2}},\frac{|u|^{1/2}}{t_{j+1}^{1/4}}).$$
This completes the proof of Proposition 3.6 for all $j\geq 1$. The
case $j=0$ is even simpler since $m_0= e^{nt_1}
e^{-t_1H}=\sum_{k=0}^\infty e^{2k t_1}(mP_k).$ It is estimated in
a similar fashion and we  leave the
details to the reader.\\

We will now prove the following result concerning the commutator
of $T^1_m$ with multiplication by a BMO function.
\begin{thm}
Let $m\in B(L^2(\R^n))$ be of class $C^{n+1}$ and satisfy
Mauceri's condition $(M_{2n+2})$. If $b\in BMO(\C^n)$, then there
exists a constant $C= C(p, m,n)$ such that
$$||[b, T^1_m]f||_p\leq C ||b||_*||f||_p$$
for $1<p<\infty$.
\end{thm}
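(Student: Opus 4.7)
The plan is to follow the Coifman--Rochberg--Weiss method, with Theorem~1.3 as the only analytic input. For a complex parameter $z$, introduce the conjugated family of operators
$$T_z f(v) := e^{zb(v)}\,T^1_m\bigl(e^{-zb}f\bigr)(v),$$
so that formally $T_z = T^1_m + z\,[b, T^1_m] + O(z^2)$ near $z=0$. Once one knows that $\{T_z\}_{|z|\leq\epsilon}$ is uniformly bounded on $L^p(\C^n)$, the Cauchy integral formula
$$[b, T^1_m]f = \frac{1}{2\pi i}\oint_{|z|=\epsilon}\frac{T_z f}{z^2}\,dz$$
yields $||[b,T^1_m]f||_p \leq \epsilon^{-1}\sup_{|z|=\epsilon}||T_z f||_p$. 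Choosing $\epsilon = c/||b||_*$ will produce the theorem.

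The heart of the argument is therefore to prove $||T_z||_{L^p\to L^p}\leq C$ for $|z|\leq c/||b||_*$, with $C$ independent of $b$. Writing $s=\mathrm{Re}\,z$, we have
$$||T_z f||_p^p = \int_{\C^n}|T^1_m(e^{-zb}f)(v)|^p\,e^{psb(v)}\,dv.$$
By the John--Nirenberg inequality, whenever $|ps|\cdot||b||_*$ is smaller than a fixed universal constant the weight $w(v)=e^{psb(v)}$ lies in $A_p(\C^n)$ with $[w]_{A_p}$ uniformly bounded; this is what forces $\epsilon \sim ||b||_*^{-1}$. Applying the weighted bound of Theorem~1.3 to this $w$ gives
$$||T_z f||_p^p \leq C_{[w]_{A_p}}\int_{\C^n}|e^{-zb(v)}f(v)|^p\,e^{psb(v)}\,dv = C\,||f||_p^p,$$
since the two exponentials cancel. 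Combined with the Cauchy integral estimate above, this delivers the desired bound.

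Two minor technical points need checking. First, the weak analyticity of $z\mapsto T_z f$, which justifies the Cauchy formula; this follows from the entirety of $e^{\pm zb}$ together with the local integrability of these exponentials supplied by John--Nirenberg, testing against a dense class of smooth compactly supported functions. Second, the constant produced by Theorem~1.3 must depend quantitatively on $[w]_{A_p}$ in such a way that it remains bounded as $[w]_{A_p}$ stays bounded; this is implicit in the good-$\lambda$ and sharp maximal function machinery developed in Section~3 and is standard in Calder\'on--Zygmund theory. The main obstacle, if any, is exactly this bookkeeping of the $A_p$ dependence; once it is verified, the argument is essentially formal and the BMO factor $||b||_*$ arises solely from the radius of the disk on which $\{T_z\}$ remains uniformly bounded.
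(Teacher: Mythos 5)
Your argument is correct, but it is a genuinely different route from the one the paper takes. The paper proves the theorem via the Str\"omberg--Janson sharp maximal function method: it establishes the pointwise estimate $\tilde{M}^\sharp([b,T^1_m]f)(z)\leq C\,\|b\|_*\bigl(M_r T^1_m f(z)+M_{rs}f(z)\bigr)$ for $1<rs<p$ (the twisted sharp maximal function of Section 3 replacing the usual one), and then concludes by the Fefferman--Stein type inequality of Lemma 3.3 together with the unweighted $L^p$ boundedness of $T^1_m$, $M_r$ and $M_{rs}$, referring to Janson's Lemma 11 for the details. You instead run the Coifman--Rochberg--Weiss conjugation trick, $T_zf=e^{zb}T^1_m(e^{-zb}f)$, with the Cauchy integral formula on a circle of radius $\epsilon\sim\|b\|_*^{-1}$, taking as the sole analytic input the $A_p$-weighted inequality of Theorem 1.3 applied to the weights $e^{ps\,b}$, which John--Nirenberg places in $A_p$ with uniformly controlled characteristic. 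Both inputs are legitimately available here: the hypothesis $(M_{2n+2})$ is exactly that of Theorem 1.3, the dense-class requirement in Theorem 3.4 is met for these weights via the kernel size estimate of Theorem 3.5, and the constant in Theorem 1.3 does depend on $w$ only through $[w]_{A_p}$ (the paper itself tacitly uses this uniformity when rescaling weights in Section 4), though, as you note, this uniformity is implicit rather than stated. The trade-off: your argument is softer and essentially formal once the weighted bound is in hand, but it consumes the full strength of Theorem 1.3; the paper's pointwise sharp-function estimate needs only the unweighted boundedness plus the kernel regularity, and would in addition yield weighted bounds for the commutator itself.
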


\begin{proof}
The main step in the proof of the above theorem is the following
estimate: \be\tilde{M}^\sharp ([b, T^1_m]f)(z)\leq C
||b||_*(M_rT^1_mf(z)+M_{rs}f(z)) \ee where $r,s>1$ be such that
$1<rs<p$. If we can show that (3.6) is true, then the proof of the
theorem is immediate. As the proof of the theorem is similar to
the  Lemma 11 in \cite{J}, we  leave the details to the reader.

\end{proof}

We now turn our attention towards a proof of Theorem 1.2. In order to prove this theorem we need the following $ L^2 $ version of Theorem 3.1.

\begin{thm}
Consider the operator $T=k\times f$ where $k\in L^2(\C^n)$
satisfies the estimate \be\left( \int_{|z| > 2|u|}
|z|^{2n+2\delta}|k(z-u) e^{-\frac{i}{2}\Im (z.
\bar{u})}-k(z)|^2  dz \right)^{\frac{1}{2}}  \leq C~ |u|^{\delta}\ee for
some $\delta>0$. Then $Tf$ is bounded on $L^p(\C^n)$, $1<
p<\infty$. Moreover, if $Tf\in L^p(\C^n,w)$ for a dense class of
functions in $L^p(\C^n,w)$, $w\in A_{p/2}(\C^n)$, $2<p<\infty$,
then
$$\int_{\C^n}|Tf|^p w(z)dz\leq \int_{\C^n}|f(z)|^p w(z)dz$$
for all $w\in A_{p/2}$, $f\in L^p(\C^n, w)$.
\end{thm}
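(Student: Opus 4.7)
The strategy parallels the proof of Theorem 3.1, with $L^1$ averages replaced by $L^2$ averages throughout. The $L^2$-type H\"ormander condition (3.7) naturally forces the weights to lie in $A_{p/2}$ rather than $A_p$, because the final weighted estimate will be obtained by a Fefferman--Stein argument applied to $|Tf|^2$.

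For the unweighted $L^p$-boundedness, first observe that since $k\in L^2(\C^n)$ the Weyl transform $W(k)$ is Hilbert--Schmidt on $L^2(\R^n)$, so the twisted convolution operator $T=k\times\,\cdot$ is automatically bounded on $L^2(\C^n)$. Combined with (3.7), the classical Calder\'on--Zygmund decomposition yields weak type $(1,1)$: on each bad cube $Q$ with center $u$, factor out the twist $e^{-\frac{i}{2}\Im(z.\bar u)}$ from $b_Q$, reverse the order of integration, and apply Cauchy--Schwarz against (3.7). Marcinkiewicz interpolation then gives $1<p\le 2$, and duality covers $2\le p<\infty$.

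For the weighted estimate I would introduce an $L^2$-based twisted sharp maximal function
$$\tilde M^{\sharp}_2 g(v) = \sup_{v\in Q}\inf_{c\in\C}\left(\frac{1}{|Q|}\int_Q \bigl|g(z)e^{-\frac{i}{2}\Im(z.\bar u)} - c\bigr|^2\,dz\right)^{1/2},$$
where $u$ is the center of $Q$, and establish the pointwise majorization $\tilde M^{\sharp}_2(Tf)(v)\le C\,M_s f(v)$ for some fixed $1<s<2$. Split $f=f\chi_{2Q}+f\chi_{(2Q)^c}$. The local piece is controlled by $L^2$-boundedness of $T$, giving the upper bound $C\bigl(|2Q|^{-1}\int_{2Q}|f|^2\bigr)^{1/2}\le CM_s f(v)$. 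For the global piece, choose $c$ to be the twisted mean of $Tf_2$ on $Q$; using the identity
$$\Im(z.\bar w) = \Im\bigl((z-u).\overline{(w-u)}\bigr) + \Im(z.\bar u) + \Im(u.\bar w)$$
to regroup the phases, the difference $Tf_2(z)e^{-\frac{i}{2}\Im(z.\bar u)} - c$ becomes an integral whose kernel in $z$ is exactly of the form $k(z-w)e^{-\frac{i}{2}\Im((z-u).\overline{(w-u)})}-k(u-w)$. Decomposing $w\in(2Q)^c$ into dyadic annuli $2^{j+1}Q\setminus 2^j Q$ and applying Cauchy--Schwarz together with (3.7) on each annulus, the $\delta$-decay defeats the geometric growth and the series sums to $CM_s f(v)$.

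The weighted norm inequality then follows by a Fefferman--Stein argument applied to $|Tf|^2$: for $w\in A_{p/2}$ with $p>2$, the analogue of Lemma 3.3 (with $|Tf|^2$ replacing $|Tf|$) gives
$$\int|Tf|^p\,w\,dz \;\le\; C\int(\tilde M^{\sharp}_2 Tf)^p\,w\,dz\;\le\; C\int (M_s f)^p\,w\,dz\;\le\; C\int |f|^p\,w\,dz,$$
the last step using boundedness of $M_s$ on $L^p(w)$ for $w\in A_{p/s}\supset A_{p/2}$, valid since $s<2$ can be chosen close to $1$. The density hypothesis on $Tf$ legitimizes the truncation step in the good-$\lambda$ inequality. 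The main obstacle is the bookkeeping of twist factors in the pointwise sharp-function estimate: the constant $c$ and the various exponentials must be rearranged so that the resulting integrand is exactly the kernel difference appearing in (3.7), with $z-u$ there playing the role of $u$ here. Once this algebraic setup is in place, the dyadic summation is routine.
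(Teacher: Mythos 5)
Your overall architecture matches the paper's: Cauchy--Schwarz turns (3.7) into the $L^1$ twisted H\"ormander condition (giving $1<p\le 2$ via the Calder\'on--Zygmund/Mauceri route), and a twisted sharp maximal function combined with the good-$\lambda$ inequality of Lemma 3.3 handles $p>2$ and the weights; your phase regrouping identity and the choice of $c$ as the twisted mean of $Tf_2$ are exactly what the paper's Lemma 3.10 does. However, there is a concrete error in your key pointwise estimate. You claim $\tilde M_2^\sharp(Tf)(v)\le C\,M_sf(v)$ for some $s<2$, and in particular that the local piece satisfies $\bigl(|2Q|^{-1}\int_{2Q}|f|^2\bigr)^{1/2}\le C\,M_sf(v)$. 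That inequality is false for $s<2$: by Jensen the $L^2$ average dominates the $L^s$ average, not the reverse. Both the local piece (which uses the strong $L^2$ bound for $T$) and the tail piece (where Cauchy--Schwarz against (3.7) on dyadic annuli produces $\bigl(\int_{(2Q)^c}|f(w)|^2|u-w|^{-2n-2\delta}\,dw\bigr)^{1/2}$) are controlled only by $M_2f(v)$, and this exponent cannot be lowered when the only smoothness information on $k$ is the $L^2$-weighted condition (3.7). The correct statement is $\tilde M^\sharp(Tf)\le C\,M_2f$ (the paper's Lemma 3.10), and the chain then closes because $\int_{\C^n}(M_2f)^pw\,dz=\int_{\C^n}(M|f|^2)^{p/2}w\,dz\le C\int_{\C^n}|f|^pw\,dz$ holds precisely when $M$ is bounded on $L^{p/2}(w)$, i.e.\ when $w\in A_{p/2}$ and $p>2$. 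The $A_{p/2}$ restriction in the statement is forced by this $M_2$ endpoint; if your $M_s$ bound with $s$ near $1$ were available, it would yield the stronger $A_p$ conclusion of Theorem 3.4 from the weaker hypothesis (3.7), which is not the case.

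A secondary issue: for the unweighted range $2<p<\infty$ you invoke duality, but the adjoint of $f\mapsto k\times f$ is twisted convolution by $\overline{k(-\cdot)}$, whose H\"ormander condition carries the twist $e^{+\frac{i}{2}\Im(z\cdot\bar u)}$ rather than $e^{-\frac{i}{2}\Im(z\cdot\bar u)}$, and this is not literally implied by (3.7). The point is harmless only because the sharp-function argument with $w\equiv 1$ (once corrected to $M_2$, which is bounded on $L^p$ for $p>2$) already covers $2<p<\infty$, which is how the paper proceeds.
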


In order to prove the theorem we need the following lemma.
\begin{lem}
Let $k\in L^2(\C^n)$ satisfies the condition of the above theorem.
Then
$$\tilde{M}^{\sharp}(Tf)(v)\leq C ~ M_2f(v).$$

\end{lem}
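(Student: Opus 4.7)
Fix $v\in\C^n$ and a cube $Q$ with $v\in Q$, let $u$ be its center and $r$ its diameter. Since replacing the twisted average $\widetilde{(Tf)}_Q$ by any other constant $c_Q$ changes the mean integral by at most a factor of two, it suffices to choose a convenient $c_Q$ and show
\[
\frac{1}{|Q|}\int_Q \bigl| (Tf)(z)\,e^{-\tfrac{i}{2}\Im(z\cdot\bar u)} - c_Q \bigr|\,dz \leq C~ M_2 f(v).
\]
Decompose $f=f_1+f_2$ with $f_1=f\chi_{2Q}$ and take $c_Q=(Tf_2)(u)$, which is pointwise defined since $k,f_2\in L^2(\C^n)$.

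The contribution of $f_1$ is handled by observing that the twisting phase is unimodular and that $T$ is bounded on $L^2(\C^n)$ (because $k\in L^2$ makes $W(k)$ Hilbert--Schmidt and hence bounded, and $W(k\times f)=W(k)W(f)$); then Cauchy--Schwarz gives
\[
\frac{1}{|Q|}\int_Q |Tf_1(z)|\,dz \leq |Q|^{-1/2}\|T f_1\|_2 \leq C\,|Q|^{-1/2}\|f\chi_{2Q}\|_2 \leq C~ M_2 f(v).
\]

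For the $f_2$ piece, the change of variable $w=u+y$ in the definition of twisted convolution yields
\[
(Tf_2)(z)\,e^{-\tfrac{i}{2}\Im(z\cdot\bar u)} - (Tf_2)(u) = \int f(u+y)\,e^{\tfrac{i}{2}\Im(u\cdot\bar y)}\bigl[k(h-y)\,e^{\tfrac{i}{2}\Im(h\cdot\bar y)} - k(-y)\bigr]\,dy,
\]
with $h=z-u$, where the integration runs effectively over $\{|y|\geq cr\}$ because $\operatorname{supp} f_2\subset\C^n\setminus 2Q$. On this set $|y|>2|h|$, so after the substitution $z\mapsto -y$, $u\mapsto -h$ the hypothesis (3.7) applies to the bracketed kernel difference. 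Apply Cauchy--Schwarz against the weight $|y|^{2n+2\delta}$: the kernel factor is bounded by $C|h|^\delta\leq Cr^\delta$ by (3.7), while the $f$ factor is controlled by a dyadic decomposition of $\{|y|>cr\}$ into annuli $\{2^kr\leq |y|<2^{k+1}r\}$ yielding, after summing a geometric series in $2^{-2k\delta}$, the bound $Cr^{-2\delta}(M_2 f(u))^2$. The product of the two factors is thus $\leq CM_2 f(u)\leq CM_2 f(v)$ uniformly in $z\in Q$ (the passage from $u$ to $v$ is absorbed by enlarging the cube used in the definition of $M_2$ by a dimensional constant), and averaging over $Q$ preserves the bound.

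The main technical point is verifying the kernel-difference identity above, so that the phase inside the definition of $\tilde M^\sharp$ combines correctly with the phase coming from the twisted convolution to produce exactly the expression appearing in the Hörmander-type condition (3.7). Once this bookkeeping is in place, the argument is the standard $L^2$-Hörmander argument for twisted singular integrals, parallel to Lemma 3.2 (which used the pointwise kernel regularity (3.1) to obtain $M_s f$ for any $s>1$) but now forced to the exponent $s=2$ by the integrated nature of (3.7).
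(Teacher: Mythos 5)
Your proposal is correct and follows essentially the same route as the paper: the decomposition $f=f_1+f_2$ with $f_1=f\chi_{2Q}$, the comparison constant $Tf_2(u)$, Cauchy--Schwarz plus $L^2$-boundedness for the local piece, and H\"older against the weight $|u-w|^{2n+2\delta}$ with a dyadic annuli sum for the far piece, the two factors $l(Q)^{\delta}$ and $l(Q)^{-\delta}M_2f(v)$ cancelling exactly as in the paper's argument. The phase bookkeeping you flag as the main technical point is precisely what the paper carries out when it writes the difference as $|k(z-w)e^{\frac{i}{2}\Im(z\cdot\bar w - z\bar u)}-k(u-w)e^{\frac{i}{2}\Im(u\cdot\bar w)}|$ and substitutes to match hypothesis (3.7).
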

\begin{proof}
Let $v\in \C^n$ and $Q$ be a cube containing it. Let
$u$ be the  center of $ Q$. Also, let $f_1= f\chi_{2Q}$ and $f_2= f-f_1$.
To prove the lemma it is enough to show that
$$\frac{1}{|Q|}\int_Q |Tf(z)e^{-\frac{i}{2}\Im (z.\bar{u})}-Tf_2(u)|dz\leq C~ M_2f(v).$$
The left hand side can be dominated by
$$\frac{1}{|Q|}\int_Q |Tf_1(z)|dz+\frac{1}{|Q|}\int_Q |Tf_2(z)e^{-\frac{i}{2}\Im (z.\bar{u})}-Tf_2(u)|dz .$$
The first term is easy to handle. Indeed, it can be estimated by
$$\left(\frac{1}{|Q|}\int_Q |Tf_1(z)|^2dz\right)^{\frac{1}{2}}.$$
Using the $L^2$ boundedness of $T$ we can dominate the above term
by
$$C~\left(\frac{1}{|Q|}\int_{2Q} |f(z)|^2dz\right)^{\frac{1}{2}}\leq 2^n C M_2f(v).$$
In order to estimate the second term we use the kernel estimate
given in the hypothesis. Using the definitions of $T$ and $f_1$,
we get
$$\frac{1}{|Q|}\int_Q \int_{\C^n\setminus 2Q}|k(z-w)e^{\frac{i}{2}\Im (z.\bar{w}-z \bar{u})}-k(u-w)e^{\frac{i}{2}\Im(u.\bar{w})}||f(w)|dw dz.$$
By H\"{o}lder's inequality the inner integral is dominated by the product of

$$ \left(\int_{\C^n}|u-w|^{2n+\delta}|k(u-w-u+z)e^{-\frac{i}{2}\Im((u-w)(\bar{u}-\bar{z}))}-k(u-w)|^2dw\right)^{\frac{1}{2}}$$
and
$$\left(\int_{\C^n\setminus
2Q}\frac{|f(w)|^2}{|u-w|^{2n+2\delta}}dw\right)^{\frac{1}{2}}.$$
Using the hypothesis of the lemma, we can observe that the first
integral is bounded by $|u-z|^\delta$ which further can be
dominated by $l(Q)^\delta$. The second integral is dominated by
$$\sum_{k=1}^\infty \int_{2^k l(Q)\leq |u-w|< 2^{k+1}l(Q)}\frac{|f(w)|^2}{|u-w|^{2n+2\delta}}.$$
One can easily see that the above sum is bounded by
$$l(Q)^{-\delta}M_2f(v).$$
Taking average over $ Q $ the lemma is proved.

\end{proof}

Now we are ready to prove Theorem 3.9. From (3.7) we can easily
deduce that the kernel of $T$ satisfies the following estimate
$$\int_{|z| > 2|u|}
|k(z-u) e^{-\frac{i}{2}\Im (z. \bar{u})}-k(z)|dz< C .$$ Hence
from Theorem 3.2 of \cite{M} we can conclude that $T$ is bounded
on $L^p(\C^n)$ for $1<p\leq 2$. For $p>2$, We will use the above
lemma. The point-wise estimate $|T f(z)|\leq M_d Tf(z)$ gives us
$$\int_{\C^n}|Tf(z)|^pdz\leq C~\int_{\C^n} (M_d (Tf)(z))^pdz.$$
As $T$ is bounded on $L^2(\C^n),$ we can use  Lemma 3.3 to conclude that
$$\int_{\C^n}|T f(z)|^pdz\leq C~ \int_{\C^n}|\tilde{M}^\sharp(Tf)(z)|^pdz$$
for any $p>2$. Now using Lemma 3.10 and the boundedness of $M_2$
on $L^p(\C^n)$, $p>2$ one can easily see
$$\int_{\C^n} |Tf(z)|^pdz\leq C~ \int_{\C^n} |f(z)|^pdz$$
As Lemma 3.3 is true for any $w\in A_p$, $1<p<\infty$, the remaining
part of the lemma can be proved by same arguments once we have a dense class of functions appearing in the hypothesis of the theorem.\\

The existence of a dense class of functions $\mathcal{D}\subset
L^p(\C^n,w)$ such that $Tf\in L^p(\C^n,w)$, $2<p<\infty$, is
guaranteed once assume the  estimate  \be\int_{\C^n}|z|^{2n+2\theta}|k(z)|^{2n+2 \theta}dz < C\ee
for some $\theta>0 $ on the kernel.
To see this let us consider the space $\mathcal{D}$ of all
smooth functions with compact support. Suppose $f$ is a function
in $\mathcal{D}$ whose support is contained in $B(0,R),$ the ball of
radius $R $ centered at the origin for some $R>0$.
Now, for $\epsilon>0$, using H\"{o}lder's inequality we see that
$\int_{|z|<2R}|Tf(z)|^pw(z) dz $ is bounded by
$$ \left(\int_{|z|<2R}w(z)^{1+\epsilon}dz\right)^{\frac{1}{1+\epsilon}}
\left(\int_{|z|<2R}|Tf(z)|^{p(1+\epsilon)/\epsilon}
\right)^{\frac{\epsilon}{1+\epsilon}}.$$ By the reverse H\"{o}lder
inequality, we can choose $\epsilon>0$ such that the first
integral is finite. The second integral is finite since $Tf\in
L^q$, $2<q<\infty$.\\

For $|z| > 2 R$,  applying H\"{o}lder's inequality in the definition of $ Tf(z) $ we get

$$|Tf(z)|\leq \left(\int_{\C^n}|z-w|^{2n+2\theta}|k(z-w)|^2dw\right)^{\frac{1}{2}}
\left(\int_{|w|<R}\frac{|f(w)|^2}{|z-w|^{2n+2\theta}} dw\right)^{\frac{1}{2}}.$$
By (3.8), the right hand side can be dominated by
$ C_R \|f\|_\infty |z|^{-n-\theta}.$
The above discussion leads us to the following estimate: \Bea
\int_{|z|>2R} |Tf(z)|^p w(z)dz&\leq& C
\sum_{j=1}^{\infty}\int_{2^j
R<|z| \leq 2^{j+1}R}\frac{w(z)}{|z|^{(n+\theta)p}}dz\\ &\leq&
C~\sum_{j=1}^{\infty}(2^jR)^{-p(n+\theta)}w(B(0, 2^{j+1}R)). \Eea
As $w\in A_{p/2}$, $w(B(0, 2^{j+1}R))$ is bounded by $ C (2^j
R)^{np}$, which implies the above sum is finite. Hence, $Tf\in
L^p(\C^n, w)$ for all $f\in \mathcal{D}$.\\

In view of the above observations, we can restate  Theorem 3.9 as follows.\\
\begin{thm}
Let us consider the operator $T=k\times f $ where the kernel $k\in
L^2(\C^n)$ satisfies the hypothesis (3.7) and
$$\left( \int_{\C^n}|z|^{2n+2\theta} |k(z)|^2 dz \right)^{\frac{1}{2}}\leq C$$
for some $ \delta >0,  \theta>0.$ Then $T$ is bounded on $L^p(\C^n)$,
$1<p<\infty$. Also, $T$ satisfies the following weighted norm
inequality
$$\int_{\C^n}|Tf(z)|^pw(z)dz\leq C~ \int_{\C^n}|f(z)|^p w(z)dz$$
where $f\in L^p(\C^n, w)$, $w\in A_{p/2}$, $2<p<\infty$.
\end{thm}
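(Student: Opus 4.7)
The plan is to recognize that Theorem 3.11 is a packaging of Theorem 3.9 together with the dense-class argument sketched in the paragraphs preceding it. So I would structure the proof in two steps.

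First, observe that hypothesis (3.7) is literally the hypothesis of Theorem 3.9, so the unweighted statement ``$T$ is bounded on $L^p(\C^n)$ for $1<p<\infty$'' is immediate. For the weighted statement, Theorem 3.9 asks in addition for a dense class $\mathcal{D}\subset L^p(\C^n,w)$ with $Tf\in L^p(\C^n,w)$ for $f\in\mathcal{D}$. The strategy is to take $\mathcal{D}=C_c^\infty(\C^n)$ (which is dense in $L^p(\C^n,w)$ whenever $w\in A_{p/2}\subset A_p$) and verify $Tf\in L^p(\C^n,w)$ using the $L^2$ size bound $\bigl(\int|z|^{2n+2\theta}|k(z)|^2dz\bigr)^{1/2}\leq C$.

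Fix $f\in C_c^\infty(\C^n)$ with $\mathrm{supp}\, f\subset B(0,R)$. Split $\int_{\C^n}|Tf|^pw\,dz$ into the integrals over $|z|\le 2R$ and $|z|>2R$. On $|z|\le 2R$, apply H\"older with exponent $1+\varepsilon$ chosen via the reverse H\"older inequality for $A_{p/2}$; this bounds the local contribution by a product of $\bigl(\int_{|z|\le 2R}w^{1+\varepsilon}\bigr)^{1/(1+\varepsilon)}$ (finite by reverse H\"older) and $\|Tf\|_{L^{p(1+\varepsilon)/\varepsilon}(|z|\le 2R)}$ (finite by the already-proved unweighted $L^q$ boundedness of $T$). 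On $|z|>2R$, apply H\"older to the convolution $Tf(z)=k\times f(z)$ with weight $|z-w|^{2n+2\theta}$, obtaining
\[
|Tf(z)|\le \Bigl(\int_{\C^n}|z-w|^{2n+2\theta}|k(z-w)|^2\,dw\Bigr)^{1/2}\Bigl(\int_{|w|<R}\frac{|f(w)|^2}{|z-w|^{2n+2\theta}}\,dw\Bigr)^{1/2}\le C_R\|f\|_\infty|z|^{-n-\theta}.
\]
A dyadic decomposition $\{2^jR<|z|\le 2^{j+1}R\}$ then gives
\[
\int_{|z|>2R}|Tf(z)|^pw\,dz\le C\sum_{j\ge 1}(2^jR)^{-p(n+\theta)}w(B(0,2^{j+1}R)),
\]
and the standard $A_{p/2}$ growth estimate $w(B(0,2^{j+1}R))\le C(2^jR)^{np}$ makes the series geometric with ratio $2^{-p\theta}<1$, hence convergent. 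Thus $Tf\in L^p(\C^n,w)$, and applying Theorem 3.9 completes the proof.

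The routine nature of the argument means there is no single ``main obstacle''; the only place requiring care is keeping track of the interplay between the decay rate $|z|^{-n-\theta}$ forced by (3.8) and the $A_{p/2}$ volume growth $(2^jR)^{np}$, which is precisely why the hypothesis is formulated with $\theta>0$. Once that is observed, everything else is a direct appeal to Theorem 3.9, H\"older, and the standard reverse H\"older and doubling properties of $A_{p/2}$ weights.
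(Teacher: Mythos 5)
Your proposal is correct and follows essentially the same route as the paper: Theorem 3.11 is indeed deduced from Theorem 3.9 by exhibiting $C_c^\infty(\C^n)$ as the dense class, with the same split into $|z|\le 2R$ (reverse H\"older plus unweighted $L^q$ boundedness) and $|z|>2R$ (H\"older against the weighted $L^2$ size bound giving $|Tf(z)|\le C_R\|f\|_\infty|z|^{-n-\theta}$, then the dyadic sum with the $A_{p/2}$ volume growth). Nothing is missing.
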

Now we are ready to prove Theorem 1.2. From the above discussions
we only need to prove the following theorem.

\begin{thm}
Let $m\in B(L^2(\R^n))$ be of class $C^{n+2}$ and satisfies
Mauceri's condition $(M_{n+2})$. Then
\begin{enumerate}
\item $\int_{\C^n} |z|^{2n+1}|k(z)|^2dz<C$
\item $\left( \int_{|z| > 2|u|} |z|^{2n+1}|k(z-u)
e^{-\frac{i}{2}\Im (z. \bar{u})}-k(z)|^2 dz
\right)^{\frac{1}{2}}\leq C~ |u|^{\frac{1}{2}}.$
\end{enumerate}
\end{thm}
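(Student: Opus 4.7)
The plan is to follow the dyadic decomposition strategy of Proposition 3.6 (writing $m = m_0 - \sum_{j\geq 1} m_j$ with $m_j = m S_j$, and letting $k_j$ denote the kernel of $m_j$), but to replace the pointwise kernel estimates by weighted $L^2$ estimates; this substitution allows the number of derivations to be reduced from $2n+2$ to $n+2$.

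The central step I will prove is the bound
$$\|\bar{\delta}^\gamma(m S_j)\|_{HS}^2 \;\leq\; C~ t_{j+1}^{|\gamma|-n}\qquad \text{for } |\gamma| \leq n+2,$$
which, by Plancherel for the Weyl transform, gives $\int_{\C^n}|z|^{2l}|k_j(z)|^2\,dz \leq C~ t_{j+1}^{l-n}$ for each integer $l\leq n+2$. To establish it, I will expand $\bar{\delta}^\gamma(m S_j)$ by the Leibniz rule into a sum of products $(\bar{\delta}^\mu m)(\bar{\delta}^{\gamma'} S_j)$, insert the spectral resolution $I = \sum_N \chi_N$ between the two factors, and combine Mauceri's Hilbert--Schmidt bound on $(\bar{\delta}^\mu m)\chi_N$ (from condition $(M_{n+2})$) with a sharpened operator-norm bound on $\chi_N(\bar{\delta}^{\gamma'}S_j)\chi_N$, stronger than the Hilbert--Schmidt estimate of Lemma 3.7. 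The latter will come from the spectral representation $\bar{\delta}^{\gamma'}S_j = (\Delta^{|\gamma'|}\phi_j)(H)\cdot A^{*(\gamma')}$ (with $\phi_j$ the symbol of $S_j$) together with standard Hermite-basis estimates for the raising operators $A^{*(\gamma')}$.

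Given the key estimate, part (1) will follow by Cauchy--Schwarz applied between $l=n$ and $l=n+1$:
$$\int|z|^{2n+1}|k_j|^2\,dz \;\leq\; \Bigl(\int|z|^{2n}|k_j|^2\Bigr)^{1/2}\Bigl(\int|z|^{2n+2}|k_j|^2\Bigr)^{1/2}\;\leq\; C~ t_{j+1}^{1/2},$$
so $\||z|^{(2n+1)/2}k_j\|_2 \leq C~ t_{j+1}^{1/4}$, and summing over $j$ via triangle inequality in $L^2(|z|^{2n+1}dz)$ produces (1) because $\sum_j 2^{-j/4} < \infty$. For part (2), the twisted shift $k_j(z-u) e^{-\frac{i}{2}\Im(z\cdot\bar{u})}$ is the kernel of $m_j W(u)$ (since $k\times \delta_u(z)=k(z-u)e^{-\frac{i}{2}\Im(z\cdot\bar u)}$ and $W(\delta_u)=W(u)$), so the difference to be bounded has Weyl transform $m_j(W(u)-I)$. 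I will then estimate $\|\bar{\delta}^\gamma(m_j(W(u)-I))\|_{HS}$ by the same Leibniz/spectral-insertion approach, using both the trivial bound $\|W(u)-I\|_{op}\leq 2$ and the identity $\bar{\delta}_k W(u) = \tfrac{i}{2} u_k W(u)$, which yields two regime-dependent estimates of sizes $t_{j+1}^{1/4}$ and $|u|\,t_{j+1}^{-1/4}$. Taking the minimum and splitting the $j$-sum at $t_{j+1}\sim |u|^2$, exactly as in the proof of Proposition 3.6, will give the claimed $C|u|^{1/2}$ bound.

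The hard part will be the sharp bound $\|\bar{\delta}^\gamma(mS_j)\|_{HS}^2 \leq C\,t_{j+1}^{|\gamma|-n}$. A direct Leibniz-plus-Lemma-3.7 computation that uses $\|AB\|_{HS}\leq \|A\|_{HS}\|B\|_{HS}$ yields only the looser $\|\bar{\delta}^\gamma(mS_j)\|_{HS}^2 \leq C~ t_{j+1}^{|\gamma|-2n}$, which after Cauchy--Schwarz gives $\||z|^{(2n+1)/2}k_j\|_2 \sim t_{j+1}^{(1-2n)/4}$, not summable in $j$ for any $n\geq 1$. The improvement to an operator-norm bound $\|\chi_N(\bar{\delta}^{\gamma'}S_j)\chi_N\|_{op} \lesssim t_{j+1}^{|\gamma'|/2}$ near $N\sim j$ -- reflecting the band-pass localization of $S_j$ on the Hermite spectrum -- is the crucial technical ingredient that makes the whole scheme work.
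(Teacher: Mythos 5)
Your proposal follows essentially the same route as the paper's own proof (Theorem 3.12 via Proposition 3.13): the same dyadic decomposition $m=m_0-\sum_j mS_j$, the same pairing of Mauceri's Hilbert--Schmidt bound on $(\bar{\delta}^\mu m)\chi_N$ with an \emph{operator-norm} (not Hilbert--Schmidt) bound on $\chi_N(\bar{\delta}^{\gamma'}S_j)$ coming from Mauceri's Lemma 4.4 --- which is indeed the crucial point you identify --- the same interpolation between $l=n$ and $l=n+1$ giving $t_{j+1}^{1/4}$, and the same splitting of the $j$-sum at $t_{j+1}\sim|u|^2$ (your claimed bound $t_{j+1}^{l-n}$ at the endpoint $l=n+2$ should only be $O(1)$, as in the paper, but only $l=n,n+1$ enter the interpolation, so this is harmless). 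Your only departure is in part (2), where you phrase the twisted difference as the operator $m_j(W(u)-I)$ and use $\bar{\delta}_kW(u)=\tfrac{i}{2}u_kW(u)$, whereas the paper estimates $\|\,|z|^{n+1/2}\nabla k_j\|_2\lesssim t_{j+1}^{-1/4}$ via $W(Z_rk_j)=imS_jA_r^*$ and applies the mean value theorem; these are equivalent computations, both paying the same extra factor $t_{j+1}^{-1/2}$ for the raising operator, so the variation is organizational rather than substantive.
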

In order to to prove the above theorem we need the following $L^2$
analogue of Proposition of 3.6. Once we have the following estimates, we immediately get the theorem since the series
$$ \sum_{j=1}^\infty  \min\left(\frac{t_{j+1}^{\frac{1}{4}}}{|u|^{\frac{1}{2}}},\frac{|u|^{\frac{1}{2}}}{t_{j+1}^{\frac{1}{4}}}\right) $$ converges.
Thus we are left with proving the following proposition.

\begin{prop}
For each $j=0, 1, 2,\cdots $ we have the estimates:
\begin{enumerate}
\item $\left(\int_{\C^n}
|z|^{2n+1}|k_j(z)|^2\right)^{\frac{1}{2}}dz \leq C~
t_{j+1}^{\frac{1}{4}}$ \item $\left( \int_{|z| > 2|u|}
|z|^{2n+1}|k_j(z-u) e^{-\frac{i}{2}\Im (z. \bar{u})}-k_j(z)|^2 dz
\right)^{\frac{1}{2}}\\ \leq C~
|u|^{\frac{1}{2}}\min\left(\frac{t_{j+1}^{\frac{1}{4}}}{|u|^{\frac{1}{2}}},\frac{|u|^{\frac{1}{2}}}{t_{j+1}^{\frac{1}{4}}}\right).$
\end{enumerate}
\end{prop}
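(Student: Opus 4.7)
The plan is to mirror the $L^\infty$ argument for Proposition~3.6 but in an $L^2$ setting, using Plancherel for the Weyl transform to convert weighted integrals into Hilbert-Schmidt norms of commutators. Since $|z|^{2n+1}$ is an odd power, I would first interpolate via Cauchy-Schwarz
$$\int|z|^{2n+1}|k_j|^2\,dz\leq\Big(\int|z|^{2n}|k_j|^2\,dz\Big)^{1/2}\Big(\int|z|^{2n+2}|k_j|^2\,dz\Big)^{1/2},$$
and for each integer $l\in\{n,n+1\}$ expand $|z|^{2l}=\sum_{|\alpha|=l}\binom{l}{\alpha}z^\alpha\bar z^\alpha$. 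Combining the identity $W(z^\alpha k_j)=(-2i)^{|\alpha|}\bar\delta^\alpha(mS_j)$ (iterating Lemma~2.3(3)) with Plancherel reduces the problem to estimating $\|\bar\delta^\alpha(mS_j)\|_{HS}$ for $|\alpha|=l$.

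The central step is a product estimate. Expand $\bar\delta^\alpha(mS_j)$ by Leibniz into a sum of $(\bar\delta^\mu m)(\bar\delta^\gamma S_j)$ with $|\mu|+|\gamma|=l$, insert $\sum_N\chi_N=I$ between the two factors, and apply $\|AB\|_{HS}\leq\|A\|_{HS}\|B\|_{op}$:
$$\|(\bar\delta^\mu m)(\bar\delta^\gamma S_j)\|_{HS}\leq\sum_N\|(\bar\delta^\mu m)\chi_N\|_{HS}\,\|\chi_N\bar\delta^\gamma S_j\|_{op}.$$
Mauceri's condition $(M_{n+2})$ controls the first factor by $C\,2^{N(n-|\mu|)/2}$ (valid since $|\mu|\leq l\leq n+1$). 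For the second factor, I would use the explicit shift structure of $\bar\delta^\gamma S_j$: it sends $\Phi_\beta\mapsto c(\beta,\gamma)\Phi_{\beta+\gamma}$, where $c(\beta,\gamma)$ is the product of $|\gamma|$ square-roots $\sqrt{2(\beta_j{+}\cdot)}\sim 2^{N|\gamma|/2}$ with an $|\gamma|$-th finite difference of $s_k=e^{-2kt_j}-e^{-2kt_{j+1}}$. Taylor expansion yields $|\Delta^{|\gamma|} s_k|\lesssim t_{j+1}^{|\gamma|}\psi_\gamma(u)$ with $u=2^Nt_{j+1}$ and $\psi_\gamma$ bounded and rapidly decreasing; reading the operator norm off the shift gives $\|\chi_N\bar\delta^\gamma S_j\|_{op}\leq Ct_{j+1}2^{N(2-|\gamma|)/2}f_\gamma(u)$ for some rapidly decreasing $f_\gamma$. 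Summing over $N$, with the substitution $u=2^Nt_{j+1}$ taming the sum via the rapid decay of $f_\gamma$, produces $\|\bar\delta^\alpha(mS_j)\|_{HS}^2\leq Ct_{j+1}^{l-n}$. Thus $\int|z|^{2n}|k_j|^2\leq C$ and $\int|z|^{2n+2}|k_j|^2\leq Ct_{j+1}$, and Cauchy-Schwarz delivers (1).

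For (2), I would produce two bounds on the left-hand side and take the smaller. A triangle inequality together with $|z-u|\geq|z|/2$ on $\{|z|>2|u|\}$ reduces to (1) and yields $Ct_{j+1}^{1/4}$. A mean-value calculation identifies the infinitesimal generator of the twisted translation $\sigma'_u$ with the Heisenberg left-invariant vector fields $Z_j,\bar Z_j$ of Section~2:
$$\sigma'_u k_j(z)-k_j(z)=-\int_0^1 e^{-\frac{is}{2}\Im(z\bar u)}\sum_j\bigl[u_jZ_jk_j+\bar u_j\bar Z_jk_j\bigr](z-su)\,ds,$$
so $|\sigma'_u k_j-k_j|^2\lesssim|u|^2\int_0^1|\widetilde\nabla k_j|^2(z-su)\,ds$; after Fubini and the change of variable $w=z-su$, it remains to bound $\int|w|^{2n+1}|\widetilde\nabla k_j|^2\,dw$. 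Using $W(Z_jk_j)=i(mS_j)A_j^*$ (Lemma~2.3) and $[A_k^*,A_j^*]=0$, the same scheme applies to $(mS_j)A_j^*$ with the single modification that the extra $A_j^*$ contributes a factor $2^{N/2}$ to the shift coefficient, promoting the operator-norm bound to $\|\chi_N(\bar\delta^\gamma S_j)A_j^*\|_{op}\leq Ct_{j+1}2^{N(3-|\gamma|)/2}f$. Running the argument through gives $\int|w|^{2l}|Z_jk_j|^2\,dw\leq Ct_{j+1}^{l-n-1}$, whence Cauchy-Schwarz between $l=n$ and $l=n+1$ yields $\int|w|^{2n+1}|\widetilde\nabla k_j|^2\lesssim t_{j+1}^{-1/2}$ and the second bound $C|u|t_{j+1}^{-1/4}$. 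The minimum of the two bounds is precisely the right-hand side of (2).

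The principal obstacle is the operator-norm estimate for $\chi_N\bar\delta^\gamma S_j$, which improves the Hilbert-Schmidt bound from Lemma~3.7 by exactly $2^{Nn/2}$, the square root of the dimension of the range of $\chi_N$. This gain is what permits the conclusion under $(M_{n+2})$ rather than the $(M_{2n+2})$ hypothesis of Proposition~3.6, and requires carefully computing the shift-operator matrix entries of $\bar\delta^\gamma S_j$ to extract the operator norm rather than using the easier but lossy Hilbert-Schmidt bound.
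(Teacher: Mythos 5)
Your proposal is correct and follows essentially the same route as the paper: Plancherel plus the Leibniz/dyadic-$\chi_N$ decomposition with the splitting $\|A\chi_N\|_{HS}\,\|\chi_N B\|_{op}$ (the operator-norm gain of $2^{Nn/2}$ over the Hilbert--Schmidt bound being exactly Mauceri's Lemma 4.4, which the paper cites where you rederive it), Cauchy--Schwarz interpolation between $l=n$ and $l=n+1$, and for (2) the minimum of the triangle-inequality bound and the gradient/mean-value bound via $Z_r k_j \leftrightarrow (mS_j)A_r^*$. Your integral form of the mean-value step with the twisted gradient is a slightly cleaner rendering of the paper's pointwise argument, but the proof is the same.
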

\begin{proof}
To prove (1) we claim that
$$\left(\int_{\C^n}
|z|^{2l}|k_j(z)|^2\right)^{\frac{1}{2}}dz \leq C~ t_{j+1}^{(l-n)/2}
$$
whenever $l\leq n+1$. In order to estimate the $L^2$ norm of
$|z|^l|k_j(z)|$, it is enough to estimate the $L^2$ norm of
$z^\alpha \bar{z}^\beta k_j(z)$, for $|\alpha|+|\beta|=l$. That
is, we have to estimate the Hilbert-Schmidt norm of $\delta^\alpha
\bar{\delta}^\beta (MS_j)$. As we have done in Proposition 3.6, it
is enough to estimate
$$\sum_{N=0}^\infty (\delta^\mu \bar{\delta}^\nu m)\chi_N. \chi_N(\delta^\gamma \bar{\delta}^\rho S_j)$$
where $|\mu|+|\nu|+|\gamma|+|\rho|=l$. The Hilbert-Schimdt norm of
the above sum is dominated by
$$\sum_{N=0}^\infty ||(\delta^\mu \bar{\delta}^\nu m)\chi_N||_{HS}. || \chi_N(\delta^\gamma \bar{\delta}^\rho S_j)||_{op}.$$
Now from Lemma 4.4 in \cite{M} we have
$$|| \chi_N(\delta^\gamma \bar{\delta}^\rho S_j)||_{op}\leq C~ t_{j+1} 2^{-N(|\gamma|+|\rho|-2)}f_{\gamma, \rho}(2^N t_{j+1}).$$
Hence using the above estimate and the hypothesis on $m$ one can
get
$$||\sum_{N=0}^\infty (\delta^\mu \bar{\delta}^\nu m)\chi_N. \chi_N(\delta^\gamma \bar{\delta}^\rho S_j)||_{HS}$$
$$\leq C~\sum_{N=0}^\infty 2^{\frac{N}{2}(n-|\mu|-|\nu|)}t_{j+1}2^{-N(|\gamma+|\rho|-2)/2}f_{\gamma, \rho}(2^N t_{j+1})$$
$$\leq C~ t_{j+1} \sum_{N=0}^\infty 2^{\frac{N}{2}(n-l+2)}f_{\gamma,\rho}(2^N t_{j+1}) \leq C~ t_{j+1}^{(l-n)/2}.$$
This proves our claim. Now, when $l=n$, $||z^n k_j(z)||_2\leq
 C$ and when $l=n+1$, $||z^{n+1}k_j(z)||_2\leq C~ t_{j+1}^{\frac{1}{2}}$.
Combining these two estimates we get
$$||z^{n+\frac{1}{2}}k_j(z)||_2\leq C~ t_{j+1}^{\frac{1}{4}}$$
which proves (1). Again from the above estimation we see that
$$||z^{n+2} K_j(z)||_2\leq C~ t_{j+1}\sum_{N=0}^\infty f_{\gamma, \rho}(2^N t_{j+1})\leq C~ \sum_{N=0}^\infty 2^N t_{j+1} f_{\gamma, \rho}(2^N t_{j+1})\leq C.$$

In order to prove (2) we need to estimate the gradient of $k_j$.
Earlier we have already noted  that
$$\frac{\partial}{\partial z_r}k_j=Z_r k_j-\frac{1}{4} \bar{z}_rk_j.$$
In order to estimate the $L^2$ norm of $|z|^l Z_r k_j$ it is enough to
estimate
$$\sum_{N=0}^\infty ||(\delta^\mu \bar{\delta}^\nu m)\chi_N||_{HS}||\chi_N(\delta^\gamma \bar{\delta}^\rho( S_jA_r^*))||_{op}.$$
From (4.4) in \cite{M} it is not difficult to see that
$$|| \chi_N(\delta^\gamma \bar{\delta}^\rho (S_j A_j^*))||_{op}\leq C~ t_{j+1} 2^{-\frac{N}{2}(|\gamma|+|\rho|-3)}f_{\gamma, \rho}(2^N t_{j+1}).$$
The above estimate and similar arguments used in the proof of (1) lead us
to the  estimate
$$||z^l Z_r k_j||_2\leq C~ t_{j+1}^{(l-n-1)/2}.$$
Putting $l=n$ and $l=n+1$ we get the estimates
$$||z^n Z_r k_j||_2\leq C~ t_{j+1}^{-\frac{1}{2}},  ~~~~~~||z^{n+1} Z_r k_j||_2\leq C$$
respectively. Hence
$$||z^{n+\frac{1}{2}} Z_r k_j||_2\leq C~ t_{j+1}^{-\frac{1}{4}}.$$
For $z_r k_j(z)$ one can see that
$$||z^n z_r k_j||_2\leq C~ ||z^{n+1} k_j||_2\leq C $$
and
$$||z^{n+1} z_r k_j||_2\leq C~ ||z^{n+2} k_j||_2\leq C.$$
Thus we have $||z^{n+\frac{1}{2}} z_r k_j||_2\leq C~. $
This proves the estimate
$$||z^{n+\frac{1}{2}}\nabla k_j(z)||_2\leq C~t_{j+1}^{-\frac{1}{4}}.$$\\

Finally, if $|z|>2|u|$, then it follows that $|z-u|>\frac{1}{2}|z|$. By triangle inequality,
$|k_j(z-u)
e^{-\frac{i}{2}\Im (z. \bar{u})}-k_j(z)|\leq
|k_j(z-u)|+|k_j(z)|$ and hence we have
\be \left( \int_{|z|>2|u|}
|z|^{2n+1}|k_j(z-u) e^{-\frac{i}{2}\Im (z. \bar{u})}-k_j(z)|^2
\right)^{\frac{1}{2}} \leq C~
|u|^{\frac{1}{2}}\frac{t_{j+1}^{\frac{1}{4}}}{|u|^{\frac{1}{2}}}.\ee
On the other hand, by mean value  theorem
$|k_j(z-u)
e^{-\frac{i}{2}\Im (z.
\bar{u})}-k_j(z)| $ is bounded by
 $$ |k_j(z-u)-k_j(z)|+|k_j(z)(e^{\frac{i}{2}\Im (z.
\bar{u})}-1)| \leq |u||\nabla
k_j(\tilde{z})|+|u||z||k_j(z)|$$
where $\tilde{z}$ is a point on the line segment
joining $(z-u)$ and $z$. Hence we get
\be\left( \int_{|z|>2|u|}
|z|^{2n+1}|k_j(z-u) e^{-\frac{\iota}{2}\Im (z. \bar{u})}-k_j(z)|^2
\right)^{\frac{1}{2}}\ee
$$ \leq C~|u|t_{j+1}^{-\frac{1}{4}}=C~
|u|^{\frac{1}{2}}\frac{|u|^{\frac{1}{2}}}{t_{j+1}^{\frac{1}{4}}}.$$
Comparing (3.9) and (3.10) we get the required result.
\end{proof}

\section [Fourier multipliers on the Heisenberg group $H^n$]
{Fourier multipliers on the Heisenberg group}

In this section we prove the theorems  stated in the introduction.
We begin with Theorem 1.7 which is very easy to prove. The proof
is based on the following lemma. Consider  convolution operators
$$ S(\lambda)
f(\xi) = \int_{\R^n} k_\lambda(\xi-\eta)f(\eta) d\eta $$ and
denote by $ S_2(\lambda) $ the following operator defined on
functions of $2n$ variables by
$$ S_2(\lambda)f(x,y) = \int_{\R^n} k_\lambda(\eta) f(x,y+\eta) d\eta.$$
We also let $ e_\lambda $ stand for the operator $ (e_\lambda
f)(x,y) = e^{(i/2)\lambda x\cdot y}f(x,y).$

\begin{lem} For every $ \lambda \in \R^* $ we have
$  T^\lambda_{S(\lambda)} = e_\lambda S_2(\lambda) e_{-\lambda}.$

\end{lem}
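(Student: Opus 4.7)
The plan is to verify the identity by computing the integral kernels on $L^2(\R^n)$ of both $S(\lambda)W_\lambda(g)$ and $W_\lambda(e_\lambda S_2(\lambda)e_{-\lambda}g)$ and showing they coincide. Combined with the defining relation $W_\lambda(T^\lambda_{S(\lambda)}g) = S(\lambda)W_\lambda(g)$ and the injectivity of $W_\lambda$ on $L^2(\C^n)$ (as a multiple of a unitary onto the Hilbert--Schmidt class), this will yield the lemma for $g$ in a suitable dense subclass, and hence by a standard approximation argument for the operators in question.

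First I would use the defining formula
$$ W_\lambda(g)\varphi(\xi) = \int_{\R^n}\!\!\int_{\R^n} g(x,y)\, e^{i\lambda(x\cdot\xi+\frac{1}{2}x\cdot y)}\,\varphi(\xi+y)\,dx\,dy, $$
and change the integration variable from $y$ to $\eta = \xi+y$ to read off the Schwartz kernel
$$ K_g(\xi,\eta) = \int_{\R^n} g(x,\eta-\xi)\, e^{\frac{i}{2}\lambda x\cdot(\xi+\eta)}\,dx. $$
Composing with $S(\lambda)$ on the left amounts to convolving in the first slot by $k_\lambda$; after the change of variable $\tau = \xi-\xi'$ the kernel of $S(\lambda)W_\lambda(g)$ becomes
$$ \int_{\R^n}\!\!\int_{\R^n} k_\lambda(\tau)\, g\bigl(x,(\eta-\xi)+\tau\bigr)\, e^{\frac{i}{2}\lambda x\cdot(\xi+\eta-\tau)}\,dx\,d\tau. $$

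Next, set $h = e_\lambda S_2(\lambda) e_{-\lambda} g$. A direct three-line computation shows that the two quadratic phases contributed by $e_{\pm\lambda}$ telescope, giving
$$ h(x,y) = \int_{\R^n} k_\lambda(\tau)\, e^{-\frac{i}{2}\lambda x\cdot\tau}\, g(x,y+\tau)\,d\tau. $$
Substituting $h$ for $g$ in the kernel formula and collecting the two exponential factors $e^{-\frac{i}{2}\lambda x\cdot\tau}$ and $e^{\frac{i}{2}\lambda x\cdot(\xi+\eta)}$ into $e^{\frac{i}{2}\lambda x\cdot(\xi+\eta-\tau)}$ produces an expression identical to the kernel of $S(\lambda)W_\lambda(g)$ obtained above, which finishes the proof.

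There is no real obstacle beyond careful bookkeeping of the phases; the clean conceptual point behind the computation is that the twist operators $e_{\pm\lambda}$ are engineered precisely to cancel the quadratic phase $e^{\frac{i}{2}\lambda x\cdot y}$ built into $\pi_\lambda(z,0)$. Once this phase is removed, convolution of the kernel $K_g(\cdot,\eta)$ in the spatial variable $\xi$ by $k_\lambda$ is mirrored exactly by the $y$-translation-convolution $S_2(\lambda)$ acting on the symbol $g$, which is the content of the lemma.
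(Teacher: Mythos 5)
Your computation is correct, and it is essentially the paper's own argument: the paper likewise verifies $W_\lambda(e_\lambda S_2(\lambda)e_{-\lambda}g)=S(\lambda)W_\lambda(g)$ by a direct change of variables, only acting on a test function $\varphi$ rather than writing down the Schwartz kernels as you do. The phase bookkeeping you carry out (the telescoping of the $e_{\pm\lambda}$ twists against the quadratic phase of $\pi_\lambda(z,0)$) is exactly the cancellation performed in the paper's proof.
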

\begin{proof} The lemma is proved by simple calculation. We note that $$W_\lambda(e_\lambda
S_2(\lambda)e_{-\lambda}f)\phi(\xi)$$
$$=\int_{\C^n}e^{(i/2)\lambda x\cdot y} S_2(\lambda) e_{-\lambda}f(x,y) e^{i\lambda(x\cdot\xi+\frac{1}{2}x\cdot y)}\phi(\xi+y)dxdy$$
$$=\int_{\C^n}\int_{\R^n} k_\lambda(\eta)e^{(i/2)\lambda x\cdot (y+\eta)}f(x,y+\eta) e^{i\lambda(x\cdot\xi+x\cdot y)}\phi(\xi+y)dxdyd\eta$$
$$=\int_{\C^n}\int_{\R^n} k_\lambda(\eta)e^{(i/2)\lambda x\cdot y}f(x,y) e^{i\lambda \{x\cdot(\xi-\eta)+x\cdot y\}}\phi(\xi-\eta+y)dxdyd\eta$$
The last integral simplifies to give
$$\int_{\R^n} k_\lambda(\eta)\int_{\C^n} f(x,y)e^{i\lambda \{x\cdot(\xi-\eta)+x\cdot y\}}\phi(\xi-\eta+y)dxdyd\eta = $$
$$\int_{\R^n}k_\lambda(\eta)W_\lambda(f)\phi(\xi-\eta)d\eta =S(\lambda)W_\lambda(f)\phi(\xi).$$
Hence the lemma is proved.

\end{proof}

From the lemma we observe that $ T^\lambda_{S(\lambda)}$ is
bounded on $ L^p(\C^n) $ whenever $ S(\lambda) $ is bounded on $
L^p(\R^n).$ We also note that when $ f $ is a function on the
Heisenberg group, $ e_{-\lambda}f^\lambda(z) = (\tau( x \cdot
y)f)^\lambda(z) $ where $ \tau(a)f(z,t) =f(z,t+a/2).$ Consider the
multiplier transform
$$ T_Sf(z,t) = \frac{1}{2\pi} \int_{-\infty}^\infty e^{-i\lambda t} T^\lambda_{S(\lambda)}f^\lambda(z) d\lambda.$$
In view of the lemma and the above observation we see that
$$ T_Sf(z,t+\frac{1}{2}x\cdot y) =  \frac{1}{2\pi} \int_{-\infty}^\infty e^{-i\lambda t} S_2(\lambda)(\tau(x\cdot y)f)^\lambda(z) d\lambda.$$
This means that
$$ (\tau(x\cdot y)T_S\tau(-x\cdot y)f)(z,t)  =\frac{1}{2\pi} \int_{-\infty}^\infty e^{-i\lambda t}  S_2(\lambda)f^\lambda(z) d\lambda.$$
Under the hypothesis of Theorem 1.7 the families $ S(\lambda) $ and $ \lambda S'(\lambda) $ are both R-bounded.
Hence the same is true of $ S_2(\lambda) $ and consequently the right hand side of the above equation
defines a bounded operator  on $ L^p(\H^n).$ As translation in the last variable is a bounded operator on $ L^p(\H^n) $
Theorem 1.7 follows immediately.\\

Returning to general multiplers on the Heisenberg group  recall
that
$$T_m f(z,t)= (2\pi)^{-1}\int_{-\infty}^{\infty} e^{i \lambda t}
T^\lambda_{m(\lambda)} f^\lambda(z)d\lambda
$$
and the R-boundedness of $M(\lambda)=T^\lambda_m(\lambda)$ can be
proved now. By Lemma 2.1
$\delta^{-1}_{\sqrt{\lambda}}T^\lambda_{m(\lambda)}\delta_{\sqrt{\lambda}}=
T^1_{\tilde{m}(\lambda)}$ which means that
$\delta^{-1}_{\sqrt{\lambda}}T^\lambda_{m(\lambda)}\delta_{\sqrt{\lambda}}$
is a Weyl multiplier with multiplier
$\tilde{m}(\lambda)=\delta_{\sqrt{\lambda}}^{-1}m(\lambda)\delta_{\sqrt{\lambda}}.$
As $\lambda^{\frac{n}{2}} \delta_{\sqrt{\lambda}}$ is  unitary and
Hilbert-Schmidt operator norm is unitary invariant, Lemma 2.2
along with the hypothesis on $m(\lambda)$ stated in Theorem 1.8
allows us to conclude that $\tilde{m}(\lambda)$ satisfies
Mauceri's condition $ (M_{n+1})$ . Consequently, by Theorem 1.3
$T^1_{\tilde{m}(\lambda)}$ satisfies the weighted norm inequality
$$\int_{\C^n}|T^1_{\tilde{m}(\lambda)}f(z)|^p w(z)dz\leq C_w \int_{\C^n} |f(z)|^pw(z)dz,$$
where $C_w$ depends on $w$ but independent of $\lambda$. The above
gives the inequality
$$\int_{\C^n} |T^\lambda_{m(\lambda)}f(z)|^p w(\sqrt{\lambda} z)dz\leq C_w \int_{\C^n} |f(z)|^p w(\sqrt{\lambda}z)dz.$$
Since $w(\sqrt{\lambda z)}$ satisfies $A_p$ condition with the
same norm as $w$, it follows that
$$\int_{\C^n}|T^\lambda_{m(\lambda)}f(z)|^p w(z)dz\leq C_w \int_{\C^n}|f(z)|^pw(z)dz.$$
By the theorem of Rubio de Francia (see \cite{R}) we get
the R-boundedness of $M(\lambda)=T^\lambda_{m(\lambda)}$.\\

We now turn our attention to the R-boundedness of $\lambda
\frac{d}{d\lambda}M(\lambda)$. According to the Lemma 2.4 we need
to
treat three  families of operators.\\

\begin{prop}
Under the hypothesis of Theorem 1.9 the families
$T^\lambda_{[m(\lambda), \xi \cdot\nabla]}$ and
$T^\lambda_{2\lambda \frac{d}{d\lambda}m(\lambda)}$ are R-bounded.
\end{prop}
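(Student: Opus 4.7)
The second family is handled immediately by Theorem 1.8: since $\lambda m'(\lambda)$ satisfies $(M_{2n+2}(\lambda))$ by hypothesis, $\{T^\lambda_{2\lambda m'(\lambda)}\}$ is R-bounded. The rest of the argument is devoted to the commutator family $\{T^\lambda_{[m(\lambda),\xi\cdot\nabla]}\}$. The plan is to show that the symbol $[m(\lambda),\xi\cdot\nabla]$ itself satisfies Mauceri's condition $(M_{2n+2}(\lambda))$ with a constant independent of $\lambda$, and then conclude by another application of Theorem 1.8.

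The first step is to re-express $\xi\cdot\nabla$ in terms of the $\lambda$-dependent annihilation/creation operators, via $\partial_{\xi_j}=\tfrac12(A_j(\lambda)-A_j^*(\lambda))$ and $|\lambda|\xi_j=\tfrac12(A_j(\lambda)+A_j^*(\lambda))$. Together with $[A_j(\lambda),A_k^*(\lambda)]=2|\lambda|\delta_{jk}$ this yields
\[
\xi\cdot\nabla \;=\; \frac{1}{4|\lambda|}\sum_{j=1}^{n}\bigl(A_j(\lambda)^2 - A_j^*(\lambda)^2\bigr) - \frac{n}{2}.
\]
Expanding the commutator via $[m,AB]=[m,A]B+A[m,B]$, and using the identities $[m(\lambda),A_j(\lambda)]=|\lambda|^{1/2}\delta_j(\lambda)m(\lambda)$ and $[m(\lambda),A_j^*(\lambda)]=-|\lambda|^{1/2}\bar\delta_j(\lambda)m(\lambda)$, expresses $[m(\lambda),\xi\cdot\nabla]$ as $\frac{|\lambda|^{-1/2}}{4}$ times a sum of terms of the four shapes $(\delta_j m)A_j$, $A_j(\delta_j m)$, $(\bar\delta_j m)A_j^*$, $A_j^*(\bar\delta_j m)$ (where we abbreviate $\delta_j=\delta_j(\lambda)$, $A_j=A_j(\lambda)$, etc.).

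To verify $(M_{2n+2}(\lambda))$ for $|\alpha|+|\beta|\leq 2n+2$, apply $\delta(\lambda)^\alpha\bar\delta(\lambda)^\beta$ and use the Leibniz rule for commutator derivations. Since $\delta_i(\lambda)A_j(\lambda)=\bar\delta_i(\lambda)A_j^*(\lambda)=0$ and $\delta_i(\lambda)A_j^*(\lambda),\,\bar\delta_i(\lambda)A_j(\lambda)$ are scalar multiples of the identity, the Leibniz expansion is finite and produces terms of the generic form
\[
|\lambda|^{-1/2}\; X\, \bigl(\delta(\lambda)^{\alpha'}\bar\delta(\lambda)^{\beta'}m(\lambda)\bigr)\, Y
\]
with $X,Y\in\{I,A_j(\lambda),A_j^*(\lambda)\}$ and $|\alpha'|+|\beta'|\leq|\alpha|+|\beta|+1\leq 2n+3$ (together with strictly lower-order terms from derivations landing on the $A$-factors, which are harmless). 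Composing with $\chi_k(\lambda)$ on the right, the two key ingredients are $\|A_j(\lambda)\chi_k(\lambda)\|_{op},\|A_j^*(\lambda)\chi_k(\lambda)\|_{op}\leq C|\lambda|^{1/2}2^{k/2}$ together with the near-invariance $A_j(\lambda)\chi_k(\lambda)=\widetilde\chi_k(\lambda)A_j(\lambda)\chi_k(\lambda)$ (and similarly for $A_j^*(\lambda)$) where $\widetilde\chi_k=\chi_{k-1}+\chi_k+\chi_{k+1}$, which follows because $A_j,A_j^*$ shift the Hermite level by one. For the terms where $Y\in\{A_j,A_j^*\}$, inserting $\widetilde\chi_k$ between the derivation and the $Y$-factor and invoking $(M_{2n+3}(\lambda))$ on $m$ yields
\[
\|(\delta(\lambda)^\alpha\bar\delta(\lambda)^\beta [m(\lambda),\xi\cdot\nabla])\chi_k(\lambda)\|_{HS}\;\leq\; C\cdot 2^{k/2}\cdot 2^{-k(|\alpha|+|\beta|+1-n)/2}\;=\;C\cdot 2^{-k(|\alpha|+|\beta|-n)/2},
\]
with the $|\lambda|^{1/2}$ from the operator norm cancelling the $|\lambda|^{-1/2}$ prefactor, so the bound is uniform in $\lambda$.

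The main obstacle is the case in which an $A$-factor sits on the \emph{left} of the derived symbol $\delta^{\alpha'}\bar\delta^{\beta'}m$, since the range of $(\delta^{\alpha'}\bar\delta^{\beta'}m)\chi_k(\lambda)$ is not confined to any fixed dyadic spectral band, so the $\widetilde\chi_k$-absorption trick is not directly available. This is resolved by commuting the left $A$-factor past the derivation, $A_j T = TA_j - |\lambda|^{1/2}\delta_j T$: the principal term $TA_j\chi_k(\lambda)$ reduces to the previous case, while the commutator contribution $\delta_j T\chi_k(\lambda)$ costs exactly one extra derivation of $m$, which is absorbed by the assumption $(M_{2n+3}(\lambda))$---this is precisely the reason we need one more derivation than the $(M_{2n+2}(\lambda))$ required for Theorem 1.8 itself. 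Alternatively, one may handle this case by duality, using $\|AT\|_{HS}=\|T^*A^*\|_{HS}$ together with the companion Mauceri-type estimate with $\chi_k(\lambda)$ on the left, which for $m^*$ follows from the hypothesis via the adjoint identity $(\delta_j m)^*=\bar\delta_j(m^*)$. Once $(M_{2n+2}(\lambda))$ for $[m(\lambda),\xi\cdot\nabla]$ is established uniformly in $\lambda$, Theorem 1.8 delivers the R-boundedness of $\{T^\lambda_{[m(\lambda),\xi\cdot\nabla]}\}$, completing the proof of the proposition.
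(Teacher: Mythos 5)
Your proposal is correct and follows essentially the same route as the paper: the second family is dispatched by the weighted-norm/R-boundedness theorem for symbols satisfying the Mauceri condition, and the commutator family is reduced, via the identity $4\lambda\,\xi_j\partial_{\xi_j}=A_j(\lambda)^2-A_j^*(\lambda)^2+[A_j^*(\lambda),A_j(\lambda)]$, to symbols of the form $|\lambda|^{-1/2}(\delta_j(\lambda)m(\lambda))A_j(\lambda)$ and its three companions, which are then shown to satisfy a $\lambda$-uniform Mauceri condition. The only difference is that you carry out in detail the verification (operator norm of $A_j(\lambda)\chi_k(\lambda)$, band near-invariance, commuting the left $A$-factor past the derivations at the cost of one extra derivative of $m$, which accounts for the hypothesis $(M_{2n+3}(\lambda))$) that the paper merely asserts.
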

\begin{proof}
The proof of R-boundedness of $T^\lambda_{2\lambda
\frac{d}{d\lambda}m(\lambda)}$ is similar to that of
$T^\lambda_{m(\lambda)}$ as $\lambda \frac{d}{d\lambda}
m(\lambda)$ satisfies the same conditions as $m(\lambda)$. To
Treat the other one we write
\begin{eqnarray*}
4\lambda \xi_j \frac{\partial}{\partial
\xi_j}&=&(A_j(\lambda)+A_j^*(\lambda))(A_j(\lambda)-A_j^*(\lambda)\\
&=& A_j(\lambda)^2-A_j^*(\lambda)^2+[A_j^*(\lambda),
A_j(\lambda)].
\end{eqnarray*}
Since $[A_j^*(\lambda), A_j(\lambda)]=-2\lambda$ I we see that
$$4\lambda[m(\lambda), \xi_j \frac{\partial}{\partial \xi_j}]=[m(\lambda), A_j(\lambda)^2]-[m(\lambda), A_j^*(\lambda)^2]$$
which can be written as
\begin{eqnarray*}
4\lambda[m(\lambda), \xi_j \frac{\partial}{\partial
\xi_j}]=\sqrt{\lambda}(\delta_j(\lambda)m(\lambda))A_j(\lambda)+\sqrt{\lambda}A_j(\lambda)\delta_j(\lambda)m(\lambda)\\+
\sqrt{\lambda}(\bar{\delta_j}(\lambda)m(\lambda))A^*_j(\lambda)+\sqrt{\lambda}A^*_j(\lambda)\bar{\delta_j}(\lambda)m(\lambda)
\end{eqnarray*}
We just consider one family corresponding to the multiplier
$$\lambda^{-1/2}(\delta_j(\lambda) m(\lambda))A_j(\lambda)=m_j(\lambda).$$
Since $\delta_j(\lambda)$ and $\bar{\delta_j}(\lambda)$ are
derivations with $\delta_j(\lambda)A_j(\lambda)=0$ and
$\bar{\delta_j}(\lambda)A_j(\lambda)=2\lambda^{1/2}I$ it follows
that the above family satisfies condition $(M_{n+1}) $.
Consequently the operator family $T^\lambda_{m_j(\lambda)}$ is
R-bounded. The other families are treated in  the same way.
\end{proof}

Finally, we are left with the family $[B,
T^\lambda_{m(\lambda)}]$. Recalling that  $B=\sum_{j=1}^n(z_j
\frac{\partial}{\partial z_j}+\bar{z_j} \frac{\partial}{\partial
\bar{z_j}})$ we consider
$$[z_j
\frac{\partial}{\partial z_j}+\bar{z_j} \frac{\partial}{\partial
\bar{z_j}}, T^\lambda_{m(\lambda)}]=[z_j
Z_j(\lambda)+\bar{z_j}\bar{Z_j}(\lambda),
T^\lambda_{m(\lambda)}].$$ As $(z_j\frac{\partial}{\partial
z_j}+\bar{z_j}\frac{\partial}{\partial \bar{z_j}})$ commutes with
dilations we get
$$\delta^{-1}_{\sqrt{\lambda}}[z_j
\frac{\partial}{\partial z_j}+\bar{z_j} \frac{\partial}{\partial
\bar{z_j}}, T^\lambda_{m(\lambda)}] \delta_{\sqrt{\lambda}} =[z_j
Z_j(1)+\bar{z_j}\bar{Z_j}(1), T^1_{\tilde{m}(\lambda)}]$$ where we
have used the relations
$$\delta_{\sqrt{\lambda}}Z_j(1) \delta^{-1}_{\sqrt{\lambda}}=\lambda^{-1/2}Z_j(\lambda),~~~
\delta_{\sqrt{\lambda}}\bar{Z_j}(1)\delta^{-1}_{\sqrt{\lambda}}=\lambda^{-1/2}\bar{Z_j}(\lambda).$$
We observe the following relations:
$$W(z_j Z_j(1)T^1_{\tilde{m}(\lambda)}f)=-2\bar{\delta_j}(\tilde{m}(\lambda)W(f) A_j^*),$$
$$W(\bar{z_j} \bar{Z_j}(1)T^1_{\tilde{m}(\lambda)}f)=-2\delta_j(\tilde{m}(\lambda)W(f) A_j)$$
and consequently
$$W(T^1_{\tilde{m}(\lambda)} z_j Z_j(1) f)=- 2 i \tilde{m}(\lambda)\bar{\delta_j}(W(f)A_j^*),$$
$$W(T^1_{\tilde{m}(\lambda)} \bar{z}_j \bar{Z_j}(1) f)=- 2 i \tilde{m}(\lambda)\bar{\delta_j}(W(f)A_j).$$
Therefore, we have
$$W([z_j
\frac{\partial}{\partial z_j}+\bar{z_j} \frac{\partial}{\partial
\bar{z_j}}, T^1_{\tilde{m}(\lambda)}]f)=-2 \bar{\delta_j }
\tilde{m}(\lambda)W(f)A_j^*- 2 \delta_j\tilde{m}(\lambda) W(f)
A_j.
$$

The above relation clearly  shows that $[B,
T^\lambda_{m(\lambda)}]$ is not a Weyl multiplier, since $W(f)$
need not commute with $A_j$ and $A_j^*$ in general. Consequently,
we cannot hope to show that $[B, T^\lambda_{m(\lambda)}]$ is
R-bounded. Indeed, when $m(\lambda)= A_j(\lambda)
H(\lambda)^{-1/2}$ which corresponds to the Riesz transforms $R_j$
on $H^n$, the operator $[B, T^\lambda_{m(\lambda)}]$ is not even
bounded on $L^2(\C^n)$. In
spite of this we can easily prove\\
\begin{prop} The family $[B, T^\lambda_{m(\lambda)}]
L_\lambda^{-1/2}, \lambda\in \R^*$ is R-bounded on $L^p(\C^n),
1<p<\infty$.
\end{prop}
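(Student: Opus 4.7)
My approach is to use the explicit commutator formula derived just before the proposition,
$$W\bigl([B,T^1_{\tilde m(\lambda)}]g\bigr) \;=\; -2\sum_{j} (\bar\delta_j\tilde m(\lambda))\,W(g)\,A_j^{*} \;-\; 2\sum_{j} (\delta_j\tilde m(\lambda))\,W(g)\,A_j,$$
to decompose $[B,T^\lambda_{m(\lambda)}]\,L_\lambda^{-1/2}$ as a finite sum of compositions of Weyl multipliers with Riesz-type transforms for the special Hermite operator, and then apply Theorem 1.8 to the Weyl multiplier factors.

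Applying the formula with $g = L_1^{-1/2}f$ and using $W(L_1^{-1/2}f) = W(f)H^{-1/2}$ together with the identities $W(f)H^{-1/2}A_j^{*} = -i\,W(Z_j L_1^{-1/2}f)$ and $W(f)H^{-1/2}A_j = -i\,W(\bar Z_j L_1^{-1/2}f)$ supplied by Lemma 2.3 at $\lambda = 1$, I arrive at the key decomposition
$$[B,T^1_{\tilde m(\lambda)}]\,L_1^{-1/2} \;=\; 2i\sum_{j} T^1_{\bar\delta_j\tilde m(\lambda)}\,Z_j L_1^{-1/2} \;+\; 2i\sum_{j} T^1_{\delta_j\tilde m(\lambda)}\,\bar Z_j L_1^{-1/2}.$$
The dilation identity $[B,T^\lambda_{m(\lambda)}]\,L_\lambda^{-1/2} \;=\; |\lambda|^{-1/2}\,\delta_{\sqrt{|\lambda|}}\,[B,T^1_{\tilde m(\lambda)}]\,L_1^{-1/2}\,\delta^{-1}_{\sqrt{|\lambda|}}$ (which follows from the fact that $B$ commutes with $\delta_{\sqrt{|\lambda|}}$ and from $\delta^{-1}_{\sqrt{|\lambda|}}L_\lambda\delta_{\sqrt{|\lambda|}} = |\lambda|\,L_1$), combined with the fact that conjugation by $\delta_{\sqrt{|\lambda|}}$ acts as an $L^p$-isometry, will then transfer the R-boundedness of the family on the right to the original family (once the scaling factors are absorbed into the Mauceri constants, see below).

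The two factors in each summand can now be handled separately. First, $Z_jL_1^{-1/2}$ and $\bar Z_jL_1^{-1/2}$ are the Riesz transforms for the twisted Laplacian $L_1$ and are bounded on $L^p(\C^n)$ for $1 < p < \infty$ by classical Calder\'on-Zygmund theory (indeed, their kernels satisfy the hypotheses of Theorem 3.1). Second, the assumption that $m(\lambda)$ satisfies the condition $(M_{2n+3}(\lambda))$ uniformly in $\lambda$ implies, via Lemma 2.2 and the unitary invariance of the Hilbert-Schmidt norm, that the derived symbols $\bar\delta_j\tilde m(\lambda)$ and $\delta_j\tilde m(\lambda)$ satisfy $(M_{2n+2})$ at $\lambda = 1$ with constants uniform in $\lambda$. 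Theorem 1.8 therefore yields R-boundedness of the Weyl multiplier families $\{T^1_{\bar\delta_j\tilde m(\lambda)}\}_\lambda$ and $\{T^1_{\delta_j\tilde m(\lambda)}\}_\lambda$ on $L^p(\C^n)$. Since R-boundedness is preserved by composition with a fixed bounded operator and by taking finite linear combinations, $\{[B,T^1_{\tilde m(\lambda)}]\,L_1^{-1/2}\}_\lambda$ is R-bounded, and the dilation identity above transfers this property to the original family.

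The principal technical obstacle is the uniform-in-$\lambda$ bookkeeping of the Mauceri constants for the derivatives $\bar\delta_j\tilde m(\lambda)$ and $\delta_j\tilde m(\lambda)$: one must carefully track the scaling factors produced by Lemma 2.2 together with the dilation invariance of the Hilbert-Schmidt norm under the unitarily renormalized dilation $|\lambda|^{n/4}\delta_{\sqrt{|\lambda|}}$, so that the resulting constants are absorbed in a way compatible with the $|\lambda|^{-1/2}$ prefactor arising from the scaling of $L_\lambda^{-1/2}$. Once this uniformity is secured, the argument reduces to a direct application of Theorem 1.8 and the classical boundedness of the Hermite-type Riesz transforms.
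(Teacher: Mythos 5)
Your decomposition is exactly the one the paper uses: the commutator identity gives
$$[B,T^1_{\tilde m(\lambda)}]L_1^{-1/2}=2i\sum_j T^1_{\bar\delta_j\tilde m(\lambda)}\,Z_j(1)L_1^{-1/2}+2i\sum_j T^1_{\delta_j\tilde m(\lambda)}\,\bar Z_j(1)L_1^{-1/2},$$
and the two factors are to be handled separately. The gap is in how you pass from this $\lambda=1$ picture back to the family $\{[B,T^\lambda_{m(\lambda)}]L_\lambda^{-1/2}\}_{\lambda\in\R^*}$. You prove R-boundedness of the conjugated family and then assert that conjugation by the $L^p$-isometries $\delta_{\sqrt{|\lambda|}}$ transfers R-boundedness to the original family. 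This step fails: R-boundedness is a joint property of the whole collection, and the conjugating dilation depends on $\lambda$. In the randomized sum $\int_0^1\|\sum_j r_j(u)\,\delta_{\sqrt{|\lambda_j|}}A_{\lambda_j}\delta_{\sqrt{|\lambda_j|}}^{-1}x_j\|\,du$ the dilations $\delta_{\sqrt{|\lambda_j|}}$ are different for different $j$ and cannot be pulled out of the sum (equivalently, out of the square function in the $L^p$ formulation), so R-boundedness of $\{A_\lambda\}$ gives no control on the conjugated family. For the same reason, viewing $Z_j(1)L_1^{-1/2}$ as ``a fixed bounded operator'' is an artifact of the $\lambda=1$ normalization: in the un-dilated picture it becomes the genuinely $\lambda$-dependent family $Z_j(\lambda)L_\lambda^{-1/2}$ (up to powers of $|\lambda|$), whose R-boundedness is itself something to be proved, and mere unweighted Calder\'on--Zygmund boundedness of a single member does not give it.

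The paper avoids this by never transferring R-boundedness across dilations. It establishes \emph{weighted} norm inequalities, uniformly over $w\in A_p(\C^n)$ with constants depending only on the $A_p$ characteristic, for each factor at $\lambda=1$: the Weyl multipliers $T^1_{\bar\delta_j\tilde m(\lambda)}$, $T^1_{\delta_j\tilde m(\lambda)}$ via the Mauceri condition and Theorem 1.3, and the oscillatory singular integrals $Z_j(1)L_1^{-1/2}$, $\bar Z_j(1)L_1^{-1/2}$ via the Lu--Zhang theorem. Weighted inequalities compose, and they \emph{do} transfer under dilation because $w(\sqrt{|\lambda|}\,z)$ has the same $A_p$ norm as $w$; this yields uniform weighted inequalities for the original operators $[B,T^\lambda_{m(\lambda)}]L_\lambda^{-1/2}$, and only then is Rubio de Francia's theorem invoked to conclude R-boundedness of the $\lambda$-family. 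You actually have the needed ingredient in hand --- you remark that the kernels of $Z_jL_1^{-1/2}$ satisfy the hypotheses of Theorem 3.1, which is precisely the weighted statement --- but you use it only to get unweighted $L^p$ boundedness. To repair the argument, replace ``R-bounded at $\lambda=1$, then conjugate'' by ``uniformly weighted-bounded at $\lambda=1$, then conjugate, then apply Rubio de Francia'' throughout. (A separate caveat, common to your write-up and worth making explicit: the prefactor $|\lambda|^{-1/2}$ you extract from $L_\lambda^{-1/2}=|\lambda|^{-1/2}\delta_{\sqrt{|\lambda|}}L_1^{-1/2}\delta_{\sqrt{|\lambda|}}^{-1}$ is unbounded as $\lambda\to 0$ and cannot simply be ``absorbed into the Mauceri constants''; it must cancel against the scaling built into the derivations $\bar\delta_j(\lambda)$, $\delta_j(\lambda)$ via Lemma 2.2, and this bookkeeping needs to be carried out, not deferred.)
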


\begin{proof}
Since $W_\lambda(L_\lambda^{-1/2} f )= W_\lambda(f)
H(\lambda)^{-1/2}$, we only need to show that the operators
$\bar{\delta_j}\tilde{m}(\lambda)Z_j(1) L^{-1/2}_1$ and $
\delta_j\tilde{m}(\lambda)\bar{Z_j}(1) L^{-1/2}_1$ satisfies
weighted norm inequalities on $L^p(\C^n)$ for any weight $w\in
A_p(\C^n).$ But $Z_j(1) L_1^{-1/2}$ and $\bar{Z_j}(1) L_1^{-1/2}$
are oscillatory singular integral operators and hence satisfy
weighted norm inequalities according to the theorem of Lu-Zhang
\cite{LZ}. Since $\bar{\delta_j} \tilde{m}(\lambda)$ and $\delta_j
\tilde{m}(\lambda)$ satisfy the condition $(M_{n+1})$ they define
the Weyl multipliers which satisfy weighted norm inequalities.
This proves the proposition
\end{proof}

Combining Propositions 4.2 and 4.3 and using the fact that
$\widehat{(\mathcal{L}^{-1/2} f)}(\lambda)=\hat{f}(\lambda)
H(\lambda)^{-1/2}$ we obtain Theorem 1.9. In order to prove
Theorem 1.10 we make use of the following observation. When $f$ is
a polyradial function  $ W(f)$ commutes with
$H_j=-\frac{\partial^2}{\partial \xi_j^2} +\xi^2_j, j=1,2,..., n$.
In view of this we have
$$2 \bar{\delta_j} \tilde{m}(\lambda) W(f) A_j^*= 2 \bar{\delta_j}\tilde{m}(\lambda) H_j^{1/2} W(f) H^{-1/2} A_j^*$$
which can be written as (since, $H_j=\frac{1}{2}( A_j A_j^*+A_j^*
A_j)$)
$$\bar{\delta_j} \tilde{m}(\lambda) A_j A_j^* H^{-1/2}W(f) H^{-1/2} A_j^*+
\bar{\delta_j} \tilde{m}(\lambda) A_j^* A_j H^{-1/2}W(f) H^{-1/2}
A_j^*.$$ The operator families  $\bar{\delta_j} \tilde{m}(\lambda)
A_j$ and $\bar{\delta_j} \tilde{m}(\lambda) A_j^*$ satisfy the
condition $(M_{n+1})$  and $A_j^* H^{-1/2}$, $A_j H^{-1/2}$ and $
H^{-1/2}A_j^*$ define oscillatory singular integrals (being
variations of Riesz
transforms).\\

Let $T(n)\subset U(n)$ be the torus which acts on $\C^n$ by
$\rho(\sigma)f(z)=f(e^{i\theta_1} z_1, ...,e^{i\theta_n} z_n)$ if
$\sigma$ is the diagonal matrix with entries $e^{i\theta_1}, ...,
e^{i\theta_n}$ then
$$Rf(z)=\int_{T(n)} \rho(\sigma)f(z) d\sigma$$
is polyradial and $||Rf||_p\leq ||f||_p$. Thus for every $ w\in
A_p(\C^n)$ which is polyradial, the operators $R \circ [B,
T^1_{\tilde{m}(\lambda)}] \circ R$ satisfy weighted norm inequalities. By
a theorem of Duandikoetxea et al \cite{D1}, a polyradial function
$w$ belongs to $A_p(\C^n)$ if and only if $w(r_1,..., r_n)$
belongs to $A_p(\R^n_+, d\mu)$ where $d\mu(r)=\Pi_{j=1}^n r_j
dr_j$. Consequently the families $R\circ T^\lambda_m(\lambda)\circ R$ and
$\lambda \frac{d}{d\lambda}R\circ T^\lambda_{m(\lambda)}\circ R$ are
R-bounded on $L^p(\R^n_+, d\mu)$.
This proves that $R\circ T_m\circ R$ is bounded on $L^p(H^n)$.\\

Finally, coming to the proof of the Theorem 1.6 recall that the
Riesz transforms which correspond to the multipliers
$m(\lambda)=A_j(\lambda) H(\lambda)^{-1/2}$ are bounded on
$L^p(H^n), 1<p<\infty$, see \cite{CG}. The theorem will be proved
if we show that $[B, T^1_{\tilde{m}(\lambda)}]$ is not bounded on
$L^2(\C^n)$. Note that $\tilde{m}(\lambda)=A_j H^{-1/2}$ and we
have to show that the Hilbert-Schmidt norm of
$$S= \bar{\delta_j}(A_j H^{-1/2})W(f) A_j^*+\delta_j(A_j H^{-1/2})W(f) A_j$$
is not bounded by a constant multiple of $||f||_2$. It can be
easily seen that
$\bar{\delta_j}H^{-1/2}=((H-2)^{-1/2}-H^{-1/2})A_j^*$ and
 $\delta_jH^{-1/2}=((H+2)^{-1/2}-H^{-1/2})A_j.$
When we take $f=\bar{\Phi }_{\alpha \beta}$ then
$W(f)\varphi=(\varphi, \Phi_\alpha)\Phi_\beta$ and therefore
$S\varphi_\mu$ survives only when $\mu=\alpha+e_j$ or
$\mu=\alpha-e_j$ where $ e_j $ is the $ j$-th coordinate vector.
Moreover, $S\Phi_{\alpha+e_j}=(2\alpha_j+2)^{1/2}
\delta_j(A_jH^{-1/2}) \Phi_\beta$ and
$S\Phi_{\alpha-e_j}=(2\alpha_j)^{1/2}\bar{\delta_j}(A_jH^{-1/2})
\Phi_\beta.$ This shows that
$$||S||^2_{HS}=(2\alpha_j+2)||\delta_j(A_j H^{-1/2})\Phi_\beta||^2_2+(2\alpha_j)||\bar{\delta_j}(A_jH^{-1/2}) \Phi_\beta||^2_2.$$
Since $||f||_2=1$ it is clear that $||S||_{HS}\leq C~ ||f||_2$
cannot
 be satisfied. This proves Theorem 1.6.

\newpage
\begin{center}
{\bf Acknowledgments}
\end{center}
The first author is thankful to CSIR, India, for the financial
support. The work of the second author is supported by J. C. Bose
Fellowship from the Department of Science and Technology (DST) and
also by a grant from UGC via DSA-SAP. Both authors thank the referee for his careful reading  and useful comments which were used in revising the manuscript.

\end{document}